\theoremstyle{plain}
\newtheorem*{conj}{Conjecture}
\newcommand{\E}{\ensuremath{\mathbb{S}\text{ets}}}
\newcommand{\CS}{\ensuremath{\mathbb{C}\mathbb{S}\text{ets}}}
\newcommand{\I}{\ensuremath{\infty\text{-}}}
\newcommand{\CC}{\ensuremath{\infty\text{-}\mathbb{C}\mathbb{C}\text{AT}}}
\newcommand{\Mag}{\ensuremath{\mathbb{M}\text{ag}}}
\newcommand{\Cu}{\ensuremath{\mathbb{C}}}
\newcommand*{\Cat}{\ensuremath{\mathbb{C}\text{at}}}
\newcommand*{\CAT}{\ensuremath{\mathbb{C}\text{AT}}}
\title{Combinatorial approach to the category $\Theta_0$ of cubical pasting diagrams} 
\author{Camell Kachour}
\begin{document}
\maketitle
\vspace*{3.5cm}
\begin{abstract}
 In these notes we describe models of globular weak $(\infty,m)$-categories ($m\in\mathbb{N}$) in the Grothendieck style, i.e
 for each $m\in\mathbb{N}$ we define a globular coherator $\Theta^{\infty}_{\mathbb{M}^m}$ whose set-models are globular weak 
 $(\infty,m)$-categories. Then we describe the combinatorics of the small category $\Theta_0$ whose objects are cubical pasting diagrams and
 whose morphisms are morphisms of cubical sets. This provides an accurate description of the monad, on the category of
 cubical sets (without degeneracies and connections), of cubical strict $\infty$-categories with connections. 
 We prove that it is a cartesian monad, solving a conjecture in \cite{camark-cub-1}. 
 This puts us in a position to describe the cubical coherator $\Theta^{\infty}_W$ whose set-models are cubical weak $\infty$-categories with connections and the
 cubical coherator $\Theta^{\infty}_{W^{0}}$ whose set-models are cubical weak $\infty$-groupoids with connections.  
 \end{abstract}

{
  \hypersetup{linkcolor=magenta}
  \tableofcontents
}

%
\vspace*{1cm}

\section*{Introduction}
Coherators were initiated by Alexander Grothendieck \cite{grothendieck-pursuing} to properly define globular weak $\infty$-groupoids. 
A coherator $\Theta^{\infty}_{\mathbb{M}^0}$ for globular weak $\infty$-groupoids is a theory in the sense of \cite{bourke-garner-monads-theories} such that $\mathbb{M}\text{od}(\Theta^{\infty}_{\mathbb{M}^0})$
is the category of globular weak $\infty$-groupoids. These theories generalization those of Lawvere and are powerful yet simple enough to capture many higher structures. For example, a 
slight modification of the definition of the globular coherator $\Theta^{\infty}_{\mathbb{M}^0}$ (see \cite{Maltsin-Gr}) leads to the definition of an other 
globular coherator $\Theta^{\infty}_{\mathbb{M}}$ whose set-models are globular weak $\infty$-categories;  
such models are thus called \textit{Grothendieck's globular weak $\infty$-categories}. In \cite{ara-these} it is conjectured that these models 
are equivalent to Batanin's globular weak $\infty$-categories \cite{batanin-main}, and this conjecture has been proved in \cite{bourke-injectif}. In order to have a feel for the simplicity of this Grothendieck's approach, we first use it to describe globular models of weak $(\infty,m)$-categories ($m\in\mathbb{N}$). 
Thus for each $m\in\mathbb{N}$ we build a globular coherator $\Theta^{\infty}_{\mathbb{M}^m}$ whose set-models 
are globular weak $(\infty,m)$-categories. The author believes these models (for all $m\in\mathbb{N}$) are 
the simplest in the literature so far (see for example \cite{bergner,Cam}). 

The non-trivial part of this article is to describe cubical pasting diagrams. 
For that we use coordinates of networks which is a 
formalism close to that of tensors for differential geometry. 
In our language cubical pasting diagrams are called
\textit{rectangular divisors} which are formal finite sums of cubes indexed with coordinates of rectangular shapes. For each  
rectangular divisor we associate a specific inductive sketch. 
In fact we shall see that such rectangular
divisors form a cubical strict monoidal $\infty$-category thus lead to a cubical strict monoidal $\infty$-category for their underlying
inductive sketches. These sketches are the objects of the cubical $\Theta_0$. This combinatoric description of 
cubical pasting diagrams lead us to the monad, on the category of
 cubical sets (without degeneracies and connections), of cubical strict $\infty$-categories with connections. Having then an accurate
 description, we prove that these monads are cartesian, as conjectured in \cite{camark-cub-1}. See also
 \cite{camark-cub-2} where, based on this conjecture, we constructed a fundamental cubical weak 
 $\infty$-groupoid functor. Also we include here an accurate
 construction of the cubical coherator $\Theta^{\infty}_W$ whose set-models are cubical weak $\infty$-categories with connections and the
 cubical coherator $\Theta^{\infty}_{W^{0}}$ whose set-models are cubical weak $\infty$-groupoids with connections. Cubical coherators 
 have also been introduced recently and independently in \cite{thibault-benjamin-these}, and it will be interesting to compare that 
 approach with ours. 
 
 Here we summarize main achievements of this article :
 \begin{itemize}
 \color{blue}
 \item In \ref{coherator-M-m} we build for each $m\in\mathbb{N}$, a globular coherator $\Theta^{\infty}_{\mathbb{M}^m}$ which set-models are models
 of Grothendieck's globular weak $(\infty,m)$-categories.
 
 \item In \ref{monad-R} we prove that the monad $\mathbb{R}=(R,i,m)$ of cubical reflexive sets is cartesian.

 \item In \ref{rectangle} we prove that the set $\mathbb{C}\text{-}\mathbb{P}\text{ast}$ of cubical pasting diagrams (called rectangular divisors here) is equipped with a structure of 
 cubical strict $\infty$-category with connections. 
 
 \item In \ref{rectangle-sketch} we prove that the set of sketches associated to cubical pasting diagrams (called rectangular divisors here) is equipped with a structure of cubical strict $\infty$-category with connections.
 
 \item In \ref{monad-S} we prove that the monad $\mathbb{S}=(S,\lambda,\mu)$ acting on $\CS$ which algebras are cubical strict $\infty$-categories with connections (described in \cite{cam-cubique,camark-cub-1}) is cartesian. A simple consequence appears in \ref{monad-S-prime} where we indicate that the other monad $\mathbb{S}=(S,\lambda,\mu)$ acting on $\CS$ which algebras are cubical strict $\infty$-categories (without connections) is cartesian. 
 
 \item In \ref{coherator-cubique-M} we build the cubical coherator $\Theta^{\infty}_W$ which models are cubical weak $\infty$-categories with connections,
 and in \ref{coherator-cubique-M0} we build the cubical coherator $\Theta^{\infty}_{W^{0}}$ which models are cubical weak $\infty$-groupoids with connections.

 \end{itemize}

 {\bf Acknowledgement.} 
I thank mathematicians of the team AGA (Arithm\'etique et G\'eom\'etrie Alg\'ebrique) who kindly organized my	
talk on homotopy types (27th November 2019), and creating the good ambience in the LMO, 
Paris-Saclay; especially I want to mention Olivier Schiffmann, Benjamin Hennion, Fran\c{c}ois Charles,
Valentin Hernandez, and Patrick Massot. 
I also thank Ross Street, Michael Batanin, Mark Weber, Ronald Brown, 
Richard Steiner, with whom I interacted during the preparation of this article. 
Finally I thank St\'ef Bonnot-Briey, Pascale Marchal, Ghislain R\`emy
and Jean-Pierre Ledru, for their trust and help.
This article has been written in November 2019, and circulated to these mathematicians who provided feedback. 

I dedicate this work to my sons, Mohamed-R\'eda and Ali-R\'eda.

\section{Coherators for globular weak $(\infty,m)$-categories ($m\in\mathbb{N}$)}

\subsection{Globular magmatic structures}


Consider the small category $\mathbb{G}$ with objects $1(n)$ for all $n\in\mathbb{N}$, with
morphisms those generated for all $n\in\mathbb{N}$
by the \textit{cosources} 
\begin{tikzcd}
1(n-1)\arrow[r,"s^{n}_{n-1}"]&1(n)
\end{tikzcd}
 and the \textit{cotargets}  
\begin{tikzcd}
1(n-1)\arrow[r,"t^{n}_{n-1}"]&1(n)
\end{tikzcd},
which satisfy the following coglobular relations :

\begin{enumerate}[(i)]
\item $ s^{n}_{n-1}\circ s^{n+1}_{n}=t^{n}_{n-1}\circ s^{n+1}_{n},$
\item $s^{n}_{n-1}\circ t^{n+1}_{n}=t^{n}_{n-1}\circ t^{n+1}_{n},$
\end{enumerate}

The small category $\mathbb{G}$ is called the \textit{globe category} and we 
may represent it schematically with its generators :

$$\begin{tikzcd}
 1(0)\arrow[rr, yshift=1.5ex,"s^{1}_{0}"]
 \arrow[rr, yshift=-1.5ex,"t^{1}_{0}"{below}]
 &&1(1) 
\arrow[rr, yshift=1.5ex,"s^{2}_{1}"]
 \arrow[rr, yshift=-1.5ex,"t^{2}_{1}"{below}]
 && 1(2) 
 \arrow[rr, yshift=1.5ex,"s^{3}_{2}"]
 \arrow[rr, yshift=-1.5ex,"t^{3}_{2}"{below}]  
   && 
         1(3)
 \arrow[rr, yshift=1.5ex,"s^{4}_{3}"]
 \arrow[rr, yshift=-1.5ex,"t^{4}_{3}"{below}]     
 && 
        1(4)\cdots 1(n-1)  
\arrow[rr, yshift=1.5ex,"s^{n}_{n-1}"]
 \arrow[rr, yshift=-1.5ex,"t^{n}_{n-1}"{below}]
&& 1(n)\cdots    
  \end{tikzcd}$$
  
\begin{definition}
Globular sets are presheaves on $\mathbb{G}^{\text{op}}$. The category of 
globular sets is denoted $\mathbb{G}\text{lob}$.
\end{definition}

A globular $\infty$-magma $M$ is given by a globular set 
\begin{tikzcd}
\mathbb{G}^{\text{op}}\arrow[r,"M"]&\E
\end{tikzcd}
equipped with operations
\begin{tikzcd}
M_n\times_{M_p} M_n\arrow[r,"\circ^{n}_p"]&M_n
\end{tikzcd} 
for all $n\geq 1$ and all $0\leq p\leq n-1$ such that :
\begin{itemize}
\item for $0\leq p<q<m$, 
$s^m_q(y\circ^{m}_{p}x)=s^m_q(y)\circ^{q}_{p}s^m_q(x)$ and 
$t^m_q(y\circ^{m}_{p}x)=t^m_q(y)\circ^{q}_{p}t^m_q(x)$

\item for $0\leq q<p<m$, $s^m_q(y\circ^{m}_{p}x)=s^m_q(y)=s^m_q(x)$
and $t^m_q(y\circ^{m}_{p}x)=t^m_q(y)=t^m_q(x)$

\item for $0\leq p=q<m$, $s^m_q(y\circ^{m}_{p}x)=s^m_q(x)$ and 
$t^m_q(y\circ^{m}_{p}x)=t^m_q(x)$

\end{itemize}

A globular reflexive $\infty$-magma is an $\infty$-magma equipped with map for
reflexivity :
\begin{tikzcd}
M_n\arrow[rr,"1^{n}_{n+1}"]&&M_{n+1}
\end{tikzcd},
$n\geq 0$ such that :
\begin{itemize}
\item $s^n_k(1^{k}_{n}(x))=x=t^n_k(1^{k}_{n}(x))$

\item $1^{q}_{n}(1^{p}_{q}(x))=1^{p}_{n}(x)$

\end{itemize}

Morphisms between reflexive $\infty$-magmas are morphisms of reflexive globular sets
between their underlying reflexive globular set structure, i.e for 
\begin{tikzcd}
M\arrow[rr,"f"]&&M'
\end{tikzcd}
we have commutative
diagrams :

\begin{tikzcd}
M_{n+1}\arrow[rr,"f_{n+1}"]&&M'_{n+1}\\
M_n\arrow[u,"1^n_{n+1}"]\arrow[rr,"f_{n}"{below}]&&M'_n\arrow[u,"1^n_{n+1}"{right}]
\end{tikzcd}

which also preserve operations $\circ^n_p$. The category of reflexive $\infty$-magmas
is denoted $\infty\text{-}\mathbb{M}\text{ag}_{\text{r}}$.


An $(\infty,m)$-globular set is a globular set $X$ equipped with $j^{n}_{n-1}$-reversors, i.e
with maps 
\begin{tikzcd}
X_n\arrow[rr,"j^{n}_{n-1}"]&&X_n
\end{tikzcd}
which satisfy the following equalities :

$$\begin{tikzcd}
X_n\arrow[rd,"s^n_{n-1}"{left}]\arrow[rr,"j^{n}_{n-1}"]&&X_n\arrow[ld,"t^{n}_{n-1}"]\\
&X_{n-1}
\end{tikzcd}\qquad
\begin{tikzcd}
X_n\arrow[rd,"t^n_{n-1}"{left}]\arrow[rr,"j^{n}_{n-1}"]&&X_n\arrow[ld,"s^{n}_{n-1}"]\\
&X_{n-1}
\end{tikzcd}$$
A morphism of $(\infty,m)$-globular sets is a morphism
\begin{tikzcd}
X\arrow[rr,"f"]&&X'
\end{tikzcd}
of globular sets which satisfy for all $n\geq m$ the following
equalities :
$$\begin{tikzcd}
X_n\arrow[d,"j^{n}_{n-1}"{left}]\arrow[rr,"f_n"]&&X'_n\arrow[d,"j^{n}_{n-1}"]\\
X_n\arrow[rr,"f_n"{below}]&&X'_n
\end{tikzcd}$$
The category of $(\infty,m)$-globular sets is denoted $(\infty,m)\text{-}\mathbb{G}\text{lob}$.

A globular reflexive $(\infty,m)$-magma is a globular reflexive 
$\infty$-magma $M$ equipped with a structure of globular
$(\infty,m)$-set; a morphism 
\begin{tikzcd}
M\arrow[rr,"f"]&&M'
\end{tikzcd}
of globular reflexive $(\infty,m)$-magmas is a morphism
of globular reflexive $\infty$-magmas which is also a 
morphism of $(\infty,m)$-sets; the category of globular 
reflexive $(\infty,m)$-magmas is denoted 
$(\infty,m)\text{-}\mathbb{M}\text{ag}_{\text{r}}$. 

\begin{remark}
A globular strict $\infty$-category $C$ is given by a globular reflexive 
$\infty$-magma
$C$ such that we have the following equalities :

\begin{itemize}
\item $x\circ^n_k1^k_n(s^n_k(x))=x$ and $1^k_n(t^n_k(x))\circ^n_kx=x$

\item $1^q_n(y\circ^q_px)=1^q_p(y)\circ^{n}_{p}1^q_p(x)$

\item $x\circ^n_k(y\circ^n_kz)=(x\circ^n_ky)\circ^n_kz$

\item $(y'\circ^n_qx')\circ^n_p(y\circ^n_qx)=(y'\circ^n_py)\circ^n_q(x'\circ^n_px)$

\end{itemize}

The category of globular strict $\infty$-categories is denoted $\infty\text{-}\mathbb{C}\text{AT}$.
A globular strict $(\infty,m)$-category is given by an $(\infty,m)$-globular set $C$ which is
also a globular strict $\infty$-category such that if $\alpha\in C_n$ ($n\geq m$) then 
$\alpha\circ^{n}_{n-1}j^{n}_{n-1}(\alpha)=1^{n-1}_n(t^n_{n-1}(\alpha))$ and
$j^{n}_{n-1}(\alpha)\circ^{n}_{n-1}\alpha=1^{n-1}_n(s^n_{n-1}(\alpha))$. This 
$n$-cell $j^{n}_{n-1}(\alpha)$ of $C_n$ is called a $\circ^n_{n-1}$-inverse of 
$\alpha$ and it is straightforward to see that such $\circ^n_{n-1}$-inverse is
uniquely defined. The category of globular strict $(\infty,m)$-categories is defined
as the full subcategory of $\infty\text{-}\mathbb{C}\text{AT}$ which objects
are globular strict $(\infty,m)$-categories and is denoted 
$(\infty,m)\text{-}\mathbb{C}\text{AT}$.
\end{remark}

\subsection{Globular Theories}

\subsubsection{Globular extensions}

A globular tree $t$ is given by a table of non-negative integers :

$$\begin{pmatrix} 
i_1&&&&i_2&&&&i_3&\cdot&\cdot&\cdot&i_{k-1}&&&&i_{k}\\\\
&&i'_1&&&&i'_2&&\cdot&\cdot&\cdot&\cdot&\cdot&&i'_{k-1}
\end{pmatrix}$$

where $k\geq 1$, $i_l>i'_l<i_{l+1}$ and $1\leq l\leq k-1$.

Let $\mathcal{C}$ a category and let 
\begin{tikzcd}
\mathbb{G}\arrow[rr,"F"]&&\mathcal{C}
\end{tikzcd}
a functor. We denote $F(1(n))=D^n$ and we shall keep the same 
notations for the image of cosources : $F(s^{i_l}_{i_{l'}})=s^{i_l}_{i_{l'}}$, 
and for the image of cotargets : $F(t^{i_l}_{i_{l'}})=t^{i_l}_{i_{l'}}$, because
no risk of confusion will occur. In this case 
\begin{tikzcd}
\mathbb{G}\arrow[rr,"F"]&&\mathcal{C}
\end{tikzcd}
is called a globular extension if for all trees $t$ as just above, the
colimit of the following diagram exist in $\mathcal{C}$ :
$$\begin{tikzcd}
D^{i_1}&&D^{i_2}&&D^{i_3}&&\cdots&&D^{i_{k-1}}&&D^{i_k}\\
&D^{i'_1}\arrow[lu,"t^{i_1}_{i'_1}"]\arrow[ru,"s^{i_2}_{i'_1}"]
&&D^{i'_2}\arrow[lu,"t^{i_2}_{i'_2}"]\arrow[ru,"s^{i_3}_{i'_2}"]&&
D^{i'_3}\arrow[lu]&\cdots&
D^{i'_{k-2}}\arrow[ru]&&
D^{i'_{k-1}}\arrow[lu,"t^{i_{k-1}}_{i'_{k-1}}"]\arrow[ru,"s^{i_k}_{i'_{k-1}}"]
\end{tikzcd}$$

\begin{remark}
In \cite{grothendieck-pursuing} Alexander Grothendieck calls these colimits \textit{globular sums}.
\end{remark}

A morphism of globular extensions, also called \textit{globular functor}, is 
given by a commutative triangle in $\CAT$ :

$$\begin{tikzcd}
&&\mathcal{C}\arrow[dd,"H"]\\
\mathbb{G}\arrow[rru,"F"]\arrow[rrd,"F'"{below}]\\
&&\mathcal{C}'
\end{tikzcd}$$

such that the functor $H$ preserves
globular sums. The category of globular extensions
is denoted $\mathbb{G}\text{-}\mathbb{E}\text{xt}$. In
fact this category has an initial object denoted 
\begin{tikzcd}
\mathbb{G}\arrow[rr,"i"]&&\Theta_0
\end{tikzcd}.
And the small category $\Theta_0$ can be described as the full subcategory
of $\mathbb{G}\text{lob}$ which objects are globular trees, and its role is
central for describing different sketches which set models are globular higher
structures. In particular this small category $\Theta_0$ is the basic inductive sketch 
we shall need to describe coherators which set models are globular weak 
$(\infty,m)$-categories ($m\in\mathbb{N}$).

\subsubsection{Globular theories}

A globular theory is given by a globular extension
\begin{tikzcd}
\mathbb{G}\arrow[rr,"F"]&&\mathcal{C}
\end{tikzcd}
such that the unique induced functor $\overline{F}$ which makes
commutative the diagram :

$$\begin{tikzcd}
&&\Theta_0\arrow[dd,"\overline{F}"]\\
\mathbb{G}\arrow[rru,"i"]\arrow[rrd,"F"{below}]\\
&&\mathcal{C}
\end{tikzcd}$$

induces a bijection between objects of $\Theta_0$ and
objects of $\mathcal{C}$. The full subcategory of 
$\mathbb{G}\text{-}\mathbb{E}\text{xt}$ which objects
are globular theories is denoted
$\mathbb{G}\text{-}\mathbb{T}\text{h}$.
Consider an object
\begin{tikzcd}
\mathbb{G}\arrow[rr,"F"]&&\mathcal{C}
\end{tikzcd},
in particular it induces the globular functor
\begin{tikzcd}
\Theta_0\arrow[rr,"\overline{F}"]&&\mathcal{C}
\end{tikzcd} 
as just above, which is a bijection on objects. A set model 
of $(F,\mathcal{C})$ or for $\mathcal{C}$ for short, is given 
by a functor :
\begin{tikzcd}
\mathcal{C}\arrow[rr,"X"]&&\E
\end{tikzcd},
such that the functor $X\circ\overline{F}$ :

$$\begin{tikzcd}
\Theta_0\arrow[rr,"\overline{F}"]&&\mathcal{C}\arrow[rr,"X"]&&\E
\end{tikzcd}$$
sends globular sums to globular products\footnote{Globular products
are just dual to globular sums.}, thus for all objects $t$ of $\Theta_0$ :

$$\begin{pmatrix} 
i_1&&&&i_2&&&&i_3&\cdot&\cdot&\cdot&i_{k-1}&&&&i_{k}\\\\
 &&i'_1&&&&i'_2&&\cdot&\cdot&\cdot&\cdot&\cdot&&i'_{k-1}
\end{pmatrix}$$

we have 

\begin{eqnarray*}
X(\overline{F}(t)) & = &{X\left(\text{colim}\left(\begin{tikzcd}
D^{i_1}&&D^{i_2}&&\cdots&&D^{i_{k-1}}&&D^{i_k}\\
&D^{i'_1}\arrow[lu,"t^{i_1}_{i'_1}"]\arrow[ru,"s^{i_2}_{i'_1}"]
&&&\cdots&&&
D^{i'_{k-1}}\arrow[lu,"t^{i_{k-1}}_{i'_{k-1}}"]\arrow[ru,"s^{i_k}_{i'_{k-1}}"]
\end{tikzcd}\right)\right)} \\
 & = & {X\left( (D^{i_1},t^{i_1}_{i'_1})\underset{D^{i'_1}}\coprod (s^{i_2}_{i'_1},D^{i_2},t^{i_3}_{i'_2})\underset{D^{i'_2}}\coprod
\cdots\underset{D^{i'_{k-1}}}\coprod
(s^{i_k}_{i'_{k-1}},D^{i_k})\right)}\\
& \simeq &
{X(D^{i_1})\underset{X(D^{i'_1})}\times\cdots
\underset{X(D^{i'_{k-1}})}\times X(D^{i_k})}
\end{eqnarray*}

$$X(\overline{F}(t))=X\left(\text{colim}\left(\begin{tikzcd}
D^{i_1}&&D^{i_2}&&\cdots&&D^{i_{k-1}}&&D^{i_k}\\
&D^{i'_1}\arrow[lu,"t^{i_1}_{i'_1}"]\arrow[ru,"s^{i_2}_{i'_1}"]
&&&\cdots&&&
D^{i'_{k-1}}\arrow[lu,"t^{i_{k-1}}_{i'_{k-1}}"]\arrow[ru,"s^{i_k}_{i'_{k-1}}"]
\end{tikzcd}\right)\right)$$

$X\left( (D^{i_1},t^{i_1}_{i'_1})\underset{D^{i'_1}}\coprod (s^{i_2}_{i'_1},D^{i_2},t^{i_3}_{i'_2})\underset{D^{i'_2}}\coprod
\cdots\underset{D^{i'_{k-1}}}\coprod
(s^{i_k}_{i'_{k-1}},D^{i_k})\right)\simeq
X(D^{i_1})\underset{X(D^{i'_1})}\times\cdots
\underset{X(D^{i'_{k-1}})}\times X(D^{i_k})$

The category of set models of $\mathcal{C}$ is the full 
subcategory of the category of presheaves
$[\mathcal{C},\E]$ which objects are set models
of $\mathcal{C}$, and it is denoted $\mathbb{M}\text{od}(\mathcal{C})$.

\subsubsection{Examples of globular theories}

\begin{Example}{The theory $\Theta_{\mathbb{M}}$}

The forgetful functor $U$ :

$$\begin{tikzcd}
\infty\text{-}\mathbb{M}\text{ag}_{\text{r}}\arrow[dd,xshift=1ex,"\dashv"{left},"U"]\\\\
\mathbb{G}\text{lob}\arrow[uu,xshift=-1ex,"F",dotted] 
  \end{tikzcd}$$
from the category $\infty\text{-}\mathbb{M}\text{ag}_{\text{r}}$ of globular reflexive $\infty$-magmas to the category $\mathbb{G}\text{lob}$ of globular sets is right adjoint, which left adjoint is denoted $F$, and this induce a monad $\mathbb{M}=(M,\eta,\mu)$ on $\mathbb{G}\text{lob}$
such that we have the equivalence of categories 
$\infty\text{-}\mathbb{M}\text{ag}_{\text{r}}\simeq
\mathbb{M}\text{-}\mathbb{A}\text{lg}$ because $U$ is 
monadic. The full subcategory 
$\Theta_{\mathbb{M}}\subset\mathbb{K}\text{l}(\mathbb{M})$ of
the Kleisli category of $\mathbb{M}$ which objects are trees is
called the theory of reflexive globular $\infty$-magmas. In fact
we have the following equivalences of categories :
$$\infty\text{-}\mathbb{M}\text{ag}_{\text{r}}\simeq
\mathbb{M}\text{-}\mathbb{A}\text{lg}\simeq
\mathbb{M}\text{od}(\Theta_{\mathbb{M}})$$

\end{Example}

\begin{Example}{The theories $\Theta_{\mathbb{M}^m}$ 
($m\in\mathbb{N}$)}

The forgetful functor $U^m$ ($m\in\mathbb{N}$) :

$$\begin{tikzcd}
(\infty,m)\text{-}\mathbb{M}\text{ag}_{\text{r}}\arrow[dd,xshift=1ex,"\dashv"{left},"U^m"]\\\\
\mathbb{G}\text{lob}\arrow[uu,xshift=-1ex,"F^m",dotted] 
  \end{tikzcd}$$
from the category $(\infty,m)\text{-}\mathbb{M}\text{ag}_{\text{r}}$ of globular reflexive $(\infty,m)$-magmas to the category 
$\mathbb{G}\text{lob}$ of globular sets is right adjoint, which left adjoint is denoted $F^m$, and this induce a monad $\mathbb{M}^m=(M^m,\eta^m,\mu^m)$ on $\mathbb{G}\text{lob}$
such that we have the equivalence of categories 
$(\infty,m)\text{-}\mathbb{M}\text{ag}_{\text{r}}\simeq
\mathbb{M}^m\text{-}\mathbb{A}\text{lg}$ because $U^m$ is 
monadic. The full subcategory 
$\Theta_{\mathbb{M}^m}\subset\mathbb{K}\text{l}(\mathbb{M}^m)$ of
the Kleisli category of $\mathbb{M}^m$ which objects are trees is
called the theory of reflexive globular $(\infty,m)$-magmas. In fact
we have the following equivalences of categories :
$$(\infty,m)\text{-}\mathbb{M}\text{ag}_{\text{r}}\simeq
\mathbb{M}^m\text{-}\mathbb{A}\text{lg}\simeq
\mathbb{M}\text{od}(\Theta_{\mathbb{M}^m})$$

\end{Example}

\subsection{Globular coherators}

\subsubsection{Admissibility}

Let 
\begin{tikzcd}
\mathbb{G}\arrow[rr,"F"]&&\mathcal{C}
\end{tikzcd}
be a globular theory, i.e an object of 
$\mathbb{G}\text{-}\mathbb{T}\text{h}$;
two arrows :
\begin{tikzcd}
D^n\arrow[rr, yshift=1.2ex,"f"]
 \arrow[rr, yshift=-1.2ex,"g"{below}]&&t
\end{tikzcd}
in $\mathcal{C}$ are parallels if $fs^{n}_{n-1}=gs^{n}_{n-1}$
and $ft^{n}_{n-1}=gt^{n}_{n-1}$ :
$$\begin{tikzcd}
D^n\arrow[rr, yshift=1.2ex,"f"]
 \arrow[rr, yshift=-1.2ex,"g"{below}]&&t\\\\
 D^{n-1}\arrow[uu, xshift=-1.2ex,"s^{n}_{n-1}"{left}]
 \arrow[uu, xshift=1.2ex,"t^{n}_{n-1}"{right}] 
\end{tikzcd}$$

Consider a couple $(f,g)$ of parallels arrows in $\mathcal{C}$
as just above. We say that it is admissible or algebraic if they
don't belong to the image of the globular functor $\overline{F}$ :
$$\begin{tikzcd}
&&\Theta_0\arrow[dd,"\overline{F}"]\\
\mathbb{G}\arrow[rru,"i"]\arrow[rrd,"F"{below}]\\
&&\mathcal{C}
\end{tikzcd}$$

Consider a couple $(f,g)$ of arrows of $\mathcal{C}$ which is admissible
 as just above; a lifting of $(f,g)$ is given by an arrow $h$ :
$$\begin{tikzcd}
D^{n+1}\arrow[rrdd,"h"]\\\\
D^{n}\arrow[uu, xshift=-1.2ex,"s^{n+1}_{n}"{left}]
 \arrow[uu, xshift=1.2ex,"t^{n+1}_{n}"{right}] 
\arrow[rr, yshift=1.2ex,"f"]
 \arrow[rr, yshift=-1.2ex,"g"{below}]&&t
 \end{tikzcd}$$
 such that $hs^{n+1}_{n}=f$ and $ht^{n+1}_{n}=g$
 
 \subsubsection{Batanin-Grothendieck Sequences}
 
 We now define the Batanin-Grothendieck sequence\footnote{Coherators associated to such sequence
 are called \textit{of Batanin-Leinster type} by some authors.} associated to a globular theory 
 \begin{tikzcd}
\mathbb{G}\arrow[rr,"F"]&&\mathcal{C}
\end{tikzcd}. 
We build it by the following induction :

\begin{itemize}
\item If $n=0$ we start with the couple $(\mathcal{C},E)$ where 
$E$ denotes the set of admissible pairs of arrows of $\mathcal{C}$;
we shall write $(\mathcal{C}_0,E_0)=(\mathcal{C},E)$ this first
step.

\item If $n=1$ we consider then the couple $(\mathcal{C}_1,E_1)$
where $\mathcal{C}_1$ is obtained by formally adding in 
$\mathcal{C}_0=\mathcal{C}$ the liftings of all elements 
$(f,g)\in E_0=E$, and $E_1$ is the set of admissible couples of arrows 
in $\mathcal{C}_1$ which are not elements of the set $E_0$;

\item If for $n\geq 2$ the couple $(\mathcal{C}_n,E_n)$ is well defined
then $\mathcal{C}_{n+1}$ is obtained by formally adding in 
$\mathcal{C}_{n}$ the liftings of all elements of $E_n$, and 
$E_{n+1}$ is the set of couples of arrows of $\mathcal{C}_{n+1}$
which are not elements of $E_n$
\end{itemize}
we give a slightly different but equivalent induction to build the Batanin-Grothendieck sequence for such globular theory
\begin{tikzcd}
\mathbb{G}\arrow[rr,"F"]&&\mathcal{C}
\end{tikzcd} :

\begin{itemize}
\item If $n=0$ we start with the couple $(\mathcal{C},E)$ where 
$E$ is the set of couple of arrow which are admissible of $\mathcal{C}$;
we denote $E=E_0=E'_0=E'_0\setminus\emptyset$ (we shall see soon the reason of these notations), and $\mathcal{C}_0=\mathcal{C}$;

\item If $n=1$ we consider the couple $(\mathcal{C}_1,E_1)$ where
$\mathcal{C}_1$ is obtained by formally adding in $\mathcal{C}_0$
all liftings of the elements $(f,g)\in E_0$, $E'_1$ is the set of all
pairs of arrows which are admissible in $\mathcal{C}_1$, and 
$E_1=E'_1\setminus E_0$; remark that $E_0=E'_0\subset E'_1$;

\item If $n=2$ we consider the couple $(\mathcal{C}_2,E_2)$ where
$\mathcal{C}_2$ is obtained by formally adding in $\mathcal{C}_1$
all liftings of the elements $(f,g)\in E_1$, $E'_2$ is the set of all
pairs of arrows which are admissible in $\mathcal{C}_2$, and
$E_2=E'_2\setminus E'_1$;

\item For $n\geq 3$ we suppose that the couple $(\mathcal{C}_n,E_n)$
is well defined with $E_n=E'_n\setminus E'_{n-1}$, then 
$\mathcal{C}_{n+1}$ is obtained by formally adding in $\mathcal{C}_n$
all liftings of the elements $(f,g)\in E_n$, $E'_{n+1}$ is the set of all
pairs of arrows which are admissible in $\mathcal{C}_{n+1}$, and
$E_{n+1}=E'_{n+1}\setminus E'_n$;
\end{itemize}

 The Batanin-Grothendieck sequence of the globular theory
\begin{tikzcd}
\mathbb{G}\arrow[rr,"F"]&&\mathcal{C}
\end{tikzcd} 
produces the following filtered diagram

\begin{tikzcd}
(\mathbb{N},\leq)\arrow[rr,"\mathcal{C}_{\bullet}"]&&
\mathbb{G}\text{-}\mathbb{T}\text{h}
\end{tikzcd}
in the category $\mathbb{G}\text{-}\mathbb{T}\text{h}$ :

$$\begin{tikzcd}
\mathcal{C}_0\arrow[rr,"i_1"]&&\mathcal{C}_1\arrow[rr,"i_2"]&&\cdots\arrow[rr,"i_n"]&&\mathcal{C}_n\arrow[rr]&&\cdots
\end{tikzcd}$$

\subsubsection{Coherators for globular theories}

We start with datas of the previous subsection, i.e 
with the Batanin-Grothendieck sequence \begin{tikzcd}
(\mathbb{N},\leq)\arrow[rr,"\mathcal{C}_{\bullet}"]&&
\mathbb{G}\text{-}\mathbb{T}\text{h}
\end{tikzcd}
for a globular theory 
\begin{tikzcd}
\mathbb{G}\arrow[rr,"F"]&&\mathcal{C}
\end{tikzcd}.

\begin{definition}
 The colimit 
\begin{tikzcd}
\mathbb{G}\arrow[rr,"F_{\infty}"]&&\mathcal{C}_{\infty}
\end{tikzcd}
of the previous filtered diagram $\mathcal{C}_{\bullet}$ :
$$\begin{tikzcd}
\mathcal{C}_0\arrow[rr,"i_1"]\arrow[rrrrrrrrdd]
&&\mathcal{C}_1\arrow[rr,"i_2"]\arrow[rrrrrrdd]&&\cdots\arrow[rr,"i_n"]&&\mathcal{C}_n\arrow[rr]\arrow[rrdd]&&\cdots\\\\
&&&&&&&&\mathcal{C}_{\infty}
\end{tikzcd}$$
is called the globular coherator of the type Batanin-Grothendieck associated to the globular theory
\begin{tikzcd}
\mathbb{G}\arrow[rr,"F"]&&\mathcal{C}
\end{tikzcd}.
\end{definition}
For shorter terminology we shall say that 
\begin{tikzcd}
\mathbb{G}\arrow[rr,"F_{\infty}"]&&\mathcal{C}_{\infty}
\end{tikzcd}
is the coherator associated to the globular theory
\begin{tikzcd}
\mathbb{G}\arrow[rr,"F"]&&\mathcal{C}
\end{tikzcd}.
It is straightforward to see that the Batanin-Grothendieck construction of coherators associated to globular theory is functorial, and the following functor $\Phi$ is called the 
Batanin-Grothendieck functor :
$$\begin{tikzcd}
\mathbb{G}\text{-}\mathbb{T}\text{h}
\arrow[rr,"\Phi"]&&\mathbb{G}\text{-}\mathbb{T}\text{h}\\
\mathcal{C}\arrow[rr,mapsto]&&\mathcal{C}^{\infty}
\end{tikzcd}$$

\subsubsection{The coherator $\Theta^{\infty}_{\mathbb{M}}$}
\label{coherator-M}

The coherator associated to the globular theory 
\begin{tikzcd}
\mathbb{G}\arrow[rr,"j"]&&\Theta_{\mathbb{M}}
\end{tikzcd}
that we obtaine with the composition :

$$\begin{tikzcd}
\mathbb{G}\arrow[rr,"i"]&&\Theta_{0}
\arrow[rr,hook]&&\Theta_{\mathbb{M}}
\end{tikzcd}$$

is denoted $\Theta^{\infty}_{\mathbb{M}}$ and $\mathbb{M}\text{od}(\Theta^{\infty}_{\mathbb{M}})$
is the category of globular weak $\infty$-categories of Grothendieck.

\begin{remark}
In 2019 John Bourke has proved \cite{bourke-injectif} the \textit{Ara conjecture} \cite{ara-these} which says that the category of 
globular weak $\infty$-categories of Batanin is equivalent to the category of globular weak 
$\infty$-categories of Grothendieck :
$$\mathbb{M}\text{od}(\Theta_{\mathbb{B}^{0}_C})
\simeq\mathbb{M}\text{od}(\Theta^{\infty}_{\mathbb{M}})$$
where here $\mathbb{B}^{0}_C$ denotes the globular operad of Batanin \cite{batanin-main} which 
algebras are his models of globular weak $\infty$-categories and $\Theta_{\mathbb{B}^{0}_C}$ is its associated theory.
\end{remark}

\subsubsection{The coherator $\Theta^{\infty}_{\mathbb{M}^m}$
($m\in\mathbb{N}$)}
\label{coherator-M-m}

The coherator associated to the globular theory 
\begin{tikzcd}
\mathbb{G}\arrow[rr,"j_m"]&&\Theta_{\mathbb{M}^m}
\end{tikzcd}
is denoted $\Theta^{\infty}_{\mathbb{M}^m}$ and $\mathbb{M}\text{od}(\Theta^{\infty}_{\mathbb{M}^m})$
is the category of globular weak $(\infty,m)$-categories of Grothendieck ($m\geq 0$).
If $m=0$, the coherator $\Theta^{\infty}_{\mathbb{M}^0}$ is the one of globular weak
$\infty$-groupoids of Grothendieck.
We trivially have the following filtration in the category $\mathbb{G}\text{-}\mathbb{T}\text{h}$ :

$$\begin{tikzcd}
\cdots\arrow[rr]&&\Theta^{\infty}_{\mathbb{M}^{m+1}}\arrow[rr]&&\Theta^{\infty}_{\mathbb{M}^m}\arrow[rr]&&
\cdots\arrow[rr]&&
\Theta^{\infty}_{\mathbb{M}^0}
\end{tikzcd}$$

$$\begin{tikzcd}
&&&&\Theta^{\infty}_{\mathbb{M}}
\arrow[lllldd]\arrow[lldd]\arrow[dd]
\arrow[rrrrdd]\\\\
\cdots\arrow[rr]&&\Theta^{\infty}_{\mathbb{M}^{m+1}}\arrow[rr]&&\Theta^{\infty}_{\mathbb{M}^m}\arrow[rr]&&
\cdots\arrow[rr]&&
\Theta^{\infty}_{\mathbb{M}^0}
\end{tikzcd}$$

which shows that we have the following inclusion of functors when
passing to set models :

$$\begin{tikzcd}
\mathbb{M}\text{od}(\Theta^{\infty}_{\mathbb{M}^0})
\arrow[rr,"i_1"]\arrow[rrrrrrrrdd]
&&\mathbb{M}\text{od}(\Theta^{\infty}_{\mathbb{M}^1})
\arrow[rr,"i_2"]\arrow[rrrrrrdd]&&\cdots\arrow[rr,"i_n"]&&
\mathbb{M}\text{od}(\Theta^{\infty}_{\mathbb{M}^m})
\arrow[rr]\arrow[rrdd]&&\cdots\\\\
&&&&&&&&
\mathbb{M}\text{od}(\Theta^{\infty}_{\mathbb{M}})
\end{tikzcd}$$

We finish this section by recalling the \textit{Grothendieck Conjecture for Homotopy Theory} :

\begin{conj}[Grothendieck's Conjecture for Homotopy Theory]
The category $\mathbb{M}\text{od}(\Theta^{\infty}_{\mathbb{M}^m})$ is Quillen equivalent to categories of simplicial models of
weak $(\infty,m)$-categories (for all $m\in\mathbb{N}$).
\end{conj}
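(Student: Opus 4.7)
The strategy I would pursue is to equip both sides of the conjectured equivalence with compatible combinatorial model structures and then exhibit a nerve--realisation Quillen equivalence between them. First I would endow $\mathbb{M}\text{od}(\Theta^{\infty}_{\mathbb{M}^m})$ with a \emph{folk-type} model structure in the spirit of Lafont--M\'etayer--Worytkiewicz: the generating cofibrations are the boundary inclusions $\partial D^n \hookrightarrow D^n$ for $n\geq 0$, while the generating trivial cofibrations adjoin formal liftings that encode invertibility of cells in dimension $> m$. Since $\Theta^{\infty}_{\mathbb{M}^m}$ is a small globular theory, the presheaf category $[\Theta^{\infty}_{\mathbb{M}^m},\E]$ is locally presentable, and $\mathbb{M}\text{od}(\Theta^{\infty}_{\mathbb{M}^m})$ is reflective and accessible inside it because models are cut out by preservation of globular products; Smith's recognition theorem then produces the required model structure once the pseudo-generating trivial cofibration axiom is verified by induction along the Batanin--Grothendieck filtration $\mathcal{C}_0 \to \mathcal{C}_1 \to \cdots \to \Theta^{\infty}_{\mathbb{M}^m}$.

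Second, on the simplicial side I would fix a well-established model, for instance Kan complexes when $m=0$, quasi-categories when $m=1$, and more generally Rezk's $\Theta_m$-spaces (or the Bergner--Rezk complete $m$-fold Segal spaces). The comparison is the nerve
$$N \colon \mathbb{M}\text{od}(\Theta^{\infty}_{\mathbb{M}^m}) \longrightarrow \text{sPSh}(\Theta_m), \qquad X \mapsto \bigl[\, T \mapsto \mathbb{M}\text{od}(\Theta^{\infty}_{\mathbb{M}^m})\bigl(\lvert T\rvert, X\bigr) \,\bigr],$$
whose left adjoint $\lvert{-}\rvert$ is the left Kan extension of a cosimplicial (or $\Theta_m$-shaped) object $T \mapsto \lvert T\rvert$, obtained by realising each simplicial generator as the free Grothendieck weak $(\infty,m)$-category on its underlying pasting diagram. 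A routine cofibration/fibration check, exploiting that the inductive step $\mathcal{C}_n \to \mathcal{C}_{n+1}$ only adjoins liftings of admissible pairs, then promotes $(\lvert{-}\rvert \dashv N)$ to a Quillen adjunction.

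Third, to upgrade this adjunction to a Quillen equivalence one must show that the derived unit and counit are weak equivalences. This is where the main obstacle lies, and it is precisely the reason the conjecture has remained open for decades: one must rectify an arbitrary homotopy coherent simplicial datum into a genuine $\Theta^{\infty}_{\mathbb{M}^m}$-algebra through coherent, functorial choices of liftings along the filtration $\mathcal{C}_0 \to \mathcal{C}_1 \to \cdots$, and conversely show that the nerve of a fibrant Grothendieck $(\infty,m)$-category satisfies the Segal-type horn-filling conditions defining the simplicial model. For $m=0$ this recovers Grothendieck's \emph{homotopy hypothesis}, on which partial progress exists (Ara, Henry, Maltsiniotis); for $m=1$ one could attempt to route the comparison through Ara's $n$-quasi-categories and the Joyal model structure.

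The decisive technical step, which I would expect to be the hardest part, is a functoriality result for the Batanin--Grothendieck functor $\Phi$ of the previous subsection: namely, that $\Phi$ carries a suitable class of weak equivalences between globular theories to Quillen equivalences between their associated model categories of algebras. Such a statement, proved uniformly at the level of the filtered colimit and therefore applicable to every $\Theta^{\infty}_{\mathbb{M}^m}$ simultaneously, would reduce the conjecture to comparing a single well-chosen pair $(\Theta^{\infty}_{\mathbb{M}^m}, \text{simplicial theory})$ at the level of generators. Formulating and proving this transfer principle --- which amounts to controlling the homotopy type of the spaces of admissible liftings at each stage $\mathcal{C}_n \to \mathcal{C}_{n+1}$ --- appears to require a genuinely new idea beyond what is currently available in the literature.
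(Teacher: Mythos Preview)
The statement you are addressing is not proved in the paper: it is explicitly presented as an open \emph{conjecture} (labelled ``Grothendieck's Conjecture for Homotopy Theory''), with no proof or even proof sketch offered. The paper simply records the statement and refers to \cite{bergner} for the existence of simplicial models to compare against. There is therefore no ``paper's own proof'' for your proposal to be measured against.

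Your text is not a proof either, and you are candid about this: you describe a research programme rather than an argument, and you correctly flag the decisive step --- rectification of homotopy-coherent data into strict $\Theta^{\infty}_{\mathbb{M}^m}$-algebras, and control of the homotopy type of the spaces of admissible liftings along the Batanin--Grothendieck tower --- as requiring ``a genuinely new idea beyond what is currently available in the literature''. That assessment is accurate; even the case $m=0$ (the homotopy hypothesis) remains open. So your proposal is best read as a plausible strategic outline, not as a proof, and in that capacity it is reasonable. But if the task was to supply a proof, the honest answer is that none exists, in the paper or elsewhere, and your write-up should state this plainly rather than be framed as a ``proof plan''.
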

See for example \cite{bergner} for such existing simplicial models.

\section{Cubical Pasting Diagrams}

\subsection{Tensorial notation}

In this section we introduce tensorial notation and shall see that contraction and dilatation 
of tensors provide interesting cubical strict $\infty$-categories, though trivial. In particular it reveals 
that tensorial calculus has an intrinsic cubical nature.

\begin{itemize}

\item 
For each $n\in\mathbb{N}$ we shall use a coordinate system $\mathcal{Z}_n$ of $n$-dimensional networks $i\in\{1,\cdots,n\}$ such that each
$i\in\{1,\cdots,n\}$ is the direction of a $n$-cube whose coordinates are indexed by this network. The coordinate
of a $n$-cube $C$ in $\mathcal{Z}_n$ is written  
$dx^1_{k_1}\otimes\cdots\otimes dx^j_{k_j}\otimes\cdots dx^n_{k_n}$
which means that $C$ is located for each direction $j\in\{1,\cdots,n\}$ at the \textit{depth} $k_j\in\mathbb{Z}$. When
no confusion occur we shall denote $dx^i_{k_i}:=dx^1_{k_1}\otimes\cdots\otimes dx^j_{k_j}\otimes\cdots dx^n_{k_n}$.

\begin{remark}
A coordinate $dx^i_{k_i}$ must be thought up to its translations in the network $\mathcal{Z}_n$.
Indeed it is straightforward to see that two coordinates $dx^i_{k_i}, dx^i_{k'_i}\in\mathcal{Z}_n$ are related by 
translations. For example any coordinates $dx^i_{k_i}\in\mathcal{Z}_n$ gives the coordinate
$dx^i_{1}:=dx^1_{1}\otimes\cdots\otimes dx^j_{1}\otimes\cdots dx^n_{1}$ by translations along
all directions $j\in\llbracket 1,n\rrbracket$. 
\end{remark}

Two coordinates $dx^{i}_{k_i}=dx^{1}_{k_1}\otimes\cdots\otimes dx^{n}_{k_n}$ and
$dx^{i}_{k'_i}=dx^{1}_{k'_1}\otimes\cdots\otimes dx^{n}_{k'_n}$ are $j$-adjacent if $k_j=k'_j+1$ or 
$k_j=k'_j-1$.

The $j$-contraction of the coordinate $dx^1_{k_1}\otimes\cdots\otimes dx^j_{k_j}\otimes\cdots dx^n_{k_n}$
is defined as the coordinate 
$$dx^i_{k_i}\setminus{j}=dx^1_{k_1}\otimes\cdots\otimes \widehat{dx^j_{k_j}}\otimes\cdots dx^n_{k_n}$$
in $\mathcal{Z}_{n-1}$ defined by removing the direction $j$ and re-indexing :
$$dx^i_{k_i}\setminus{j}:=dx^1_{k_1}\otimes\cdots\otimes dx^{j-1}_{k_{j-1}}
\otimes dx^j_{k_{j+1}}\otimes\cdots dx^{n-1}_{k_n}$$ 

Sometimes we use also the notation $a_j(dx^i_{k_i})$ for $dx^i_{k_i}\setminus{j}$.

If we apply these contractions $p$-times then we obtain the following coordinate in $\mathcal{Z}_{n-p}$ :

$$dx^i_{k_i}\setminus{(j_1,\cdots,j_p)}$$  

where the order of occurences of the $j's$ in $(j_1,\cdots,j_p)$ is important just because if $\sigma$ is an element of
the permutation group $S_p$ then the action : 

$$\sigma\cdot dx^i_{k_i}\setminus{(j_1,\cdots,j_p)}:=dx^i_{k_i}\setminus{(j_{\sigma(1)},\cdots,j_{\sigma(p)})}$$

does not imply the equality between $dx^i_{k_i}\setminus{(j_1,\cdots,j_p)}$ and $dx^i_{k_i}\setminus{(j_{\sigma(1)},\cdots,j_{\sigma(p)})}$.

The $j$-dilatation of the coordinate $dx^1_{k_1}\otimes\cdots\otimes dx^j_{k_j}\otimes\cdots dx^n_{k_n}$
is a coordinate in $\mathcal{Z}_{n+1}$ defined by adding in the direction $j$ the guy $dx^{j}_1$ and
re-indexing : 
$$dx^i_{k_i}+j:=dx^1_{k_1}\otimes\cdots\otimes dx^{j-1}_{k_{j-1}}
\otimes dx^j_{k_1}\otimes dx^{j+1}_{k_{j}}
\cdots dx^{n+1}_{k_n}$$

and if we apply these dilatations $p$-times then we obtain the following coordinate in $\mathcal{Z}_{n+p}$ :

$$dx^i_{k_i}+(j_1,\cdots,j_p)$$  
where the order of occurrences of the $j's$ in $(j_1,\cdots,j_p)$ is important. 

\item A \textit{\text{n-}configuration} is given by a family $C_n=\{dx^{i}_{k_{i}}/k_1\in K_1,\cdots,k_n\in K_n, 
\forall i\in\llbracket 1,n\rrbracket, K_i\subset\mathbb{Z}\text{ is a finite set }\}$
of coordinates $dx^{i}_{k_{i}}$ in $\mathcal{Z}_n$. We can also use the notation
$C_n=dx^{i,1}_{k_{i}}+\cdots+dx^{i,l}_{k_{i}}+\cdots+dx^{i,r}_{k_{i}}$ for this $n$-configuration, 
where each $dx^{i,l}_{k_{i}}$ ($l\in\llbracket 1,r\rrbracket$) is a coordinate in $\mathcal{Z}_{n}$
and $r=\sharp(K_1\times\cdots\times K_n)$. This last notation shall be useful especially when we shall 
deal with \textit{divisors} in the section \ref{connected-divisors}.

\begin{remark}
A $n$-configuration $C_n$ must be thought up to its translations in the network $\mathcal{Z}_n$. 
\end{remark}

\item The $j$-contraction $a_j(C_n)$ of the $n$-configuration $C_n$ is given by the following $(n-1)$-configuration :
$a_j(C_n)=a_j(dx^{i,1}_{k_{i}})+\cdots+a_j(dx^{i,l}_{k_{i}})+\cdots+a_j(dx^{i,r}_{k_{i}})$.

\item If $C_n$ is an $n$-configuration then it is straightforward to see that others $n$-configurations $C'_n$ can be
equivalent to it by translations. For example if we write 
$C_n=dx^{i,1}_{k_{i}}+\cdots+dx^{i,l}_{k_{i}}+\cdots+dx^{i,r}_{k_{i}}$ then we can translate it along the direction
$j\in\llbracket 1,n\rrbracket$ with any integer $k\in\mathbb{Z}$ such that the resulting $n$-configurations $C'_n$
has all its coordinates with depth $\geq 1$ for the direction $j$. 

\item Two configurations $C_n$ and $C'_n$ are adjacent if 
\begin{itemize}
\item $C_n\cap C'_n=\varnothing$
\item $\exists j\in\llbracket 1,n\rrbracket$ and $dx^{i}_{k_{i}}\in C_n$, $dx^{i}_{k'_{i}}\in C'_n$ such that 
$a_j(dx^{i}_{k_{i}})=a_j(dx^{i}_{k'_{i}})$
\end{itemize}
If two configurations $C_n$ and $C'_n$ are adjacent then they produce the new configuration $C_n+_{j}C'_n$ :
               $$C_n+_{j}C'_n:=C_n\cup C'_n$$
that we call their \textit{pasting} along the direction $j$.

\item If $C_n=dx^{i,1}_{k_{i}}+\cdots+dx^{i,l}_{k_{i}}+\cdots+dx^{i,r}_{k_{i}}$ is an $n$-configuration then 
we can associate to it its $j$-dilatation $\text{dilat}_j(C_n)$, which is the $(n+1)$-configuration in $\mathcal{Z}_{n+1}$ given by
$\text{dilat}_j(C_n)=C_n+j:=(dx^{i,1}_{k_{i}}+j)+\cdots+(dx^{i,l}_{k_{i}}+j)+\cdots+(dx^{i,r}_{k_{i}}+j)$.

\item A \textit{connected \text{n-}configuration} is given by a family $C_n=\{dx^{i}_{k_{i}}/k_1\in K_1,\cdots,k_n\in K_n, 
\forall i\in\llbracket 1,n\rrbracket, K_i\subset\mathbb{N}\text{ is a finite set }\}$
of coordinates $dx^{i}_{k_{i}}$ in $\mathcal{Z}_n$ such that we have the following 
\textit{connexity property} :

$\forall dx^{i}_{k_{i}}, dx^{i}_{k'_{i}}$ in $C_n$ we have :

$\exists r\in\mathbb{N}$, $\exists l\in\llbracket 1,r\rrbracket$, $\exists k^{l}_{j}\in\mathbb{N}$ where 
each coordinate $dx^{1}_{k^{l}_1}\otimes\cdots\otimes dx^{n}_{k^{l}_n}$ belongs to $C_n$, such that we have 
the following \textit{zigzag of contractions} between $dx^{i}_{k_{i}}$ and $dx^{i}_{k'_{i}}$ :

$a_{i_{1}}(dx^{1}_{k^{1}_1}\otimes\cdots\otimes dx^{n}_{k^{1}_n})=
a_{i_{1}}(dx^{1}_{k^{2}_1}\otimes\cdots\otimes dx^{n}_{k^{2}_n})$

$a_{i_{2}}(dx^{1}_{k^{2}_1}\otimes\cdots\otimes dx^{n}_{k^{2}_n})=
a_{i_{2}}(dx^{1}_{k^{3}_1}\otimes\cdots\otimes dx^{n}_{k^{3}_n})$

...........

$a_{i_{l}}(dx^{1}_{k^{l}_1}\otimes\cdots\otimes dx^{n}_{k^{l}_n})=
a_{i_{l}}(dx^{1}_{k^{l+1}_1}\otimes\cdots\otimes dx^{n}_{k^{l+1}_n})$

..........

$a_{i_{r}}(dx^{1}_{k^{r}_1}\otimes\cdots\otimes dx^{n}_{k^{r}_n})=
a_{i_{r}}(dx^{1}_{k^{r+1}_1}\otimes\cdots\otimes dx^{n}_{k^{r+1}_n})$

where $k^{1}_1=k_1, k^{1}_n=k_n$ and $k^{r+1}_1=k'_1, k^{r+1}_n=k'_n$.

Then we say that the two coordinates $dx^{i}_{k_{i}}$ and $dx^{i}_{k'_{i}}$ are
connected by the zigzag 
$(a_{i_{1}},a_{i_{2}},\cdots,a_{i_{r}})$. Of course two coordinates $dx^{i}_{k_{i}}$ and $dx^{i}_{k'_{i}}$ may
have several equivalents zigzag of contractions.

\item It is straightforward to see that if two connected $n$-configurations $C_n$ and 
$C'_n$ are adjacent, for example along the direction $j$, then their pasting $C_n+_j C'_n$
is still a connected $n$-configuration.

\item We can see that each $n$-configuration in $\mathcal{Z}_n$ is built with subsets in it which are 
connected $n$-configurations. Thus any configuration $C_n$ is written as a formal sum
            $$C_n=C^{1}_n+\cdots+C^{l}_n+\cdots+C^{r}_n$$
such that $r=\sharp\{\text{Connected components of } C_n\}$, and for each $l\in\llbracket 1,r\rrbracket$,
$C^{l}_n$ denote the connected configurations inside $C_n$.

\item Let $C_n=\{dx^{i}_{k_{i}}/k_1\in K_1,\cdots,k_n\in K_n, 
\forall i\in\llbracket 1,n\rrbracket, K_i\subset\mathbb{Z}\text{ is a finite set }\}$ a connected configuration.

\begin{lemma}
 Its $j$-dilatation $\text{dilat}_j(C_n)=\{dx^{i}_{k_{i}}+j/dx^{i}_{k_{i}}\in C_n\}$ is a connected $(n+1)$-configuration.
\end{lemma}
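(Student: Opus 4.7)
The strategy is to lift any zigzag of contractions witnessing connectivity of $C_n$ to a zigzag of contractions in $\text{dilat}_j(C_n)$, using the fact that the $j$-dilatation of a coordinate is obtained by inserting a fixed slot $dx^j_1$ in position $j$ and re-indexing the other factors. Concretely, I pick two coordinates $dx^{i}_{k_{i}}+j$ and $dx^{i}_{k'_{i}}+j$ of $\text{dilat}_j(C_n)$; they are the dilatations of $dx^{i}_{k_{i}},dx^{i}_{k'_{i}}\in C_n$, which by the connexity of $C_n$ are linked by a zigzag of contractions $(a_{i_{1}},\dots,a_{i_{r}})$ passing through intermediate coordinates $dx^{1}_{k_1^{l}}\otimes\cdots\otimes dx^{n}_{k_n^{l}}\in C_n$ for $l\in\llbracket 1,r+1\rrbracket$.

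First I would establish the key commutation lemma: for any coordinate $\alpha\in\mathcal{Z}_n$ and any direction $l\in\llbracket 1,n\rrbracket$ distinct from $j$ (after reindexing), one has
\[
a_{\sigma_j(l)}(\alpha+j) \;=\; a_l(\alpha)+j,
\]
where $\sigma_j(l)=l$ if $l<j$ and $\sigma_j(l)=l+1$ if $l\geq j$ accounts for the shift introduced by the inserted slot in position $j$. This is a direct verification from the formulas for $a_l$ and for dilatation: both sides delete the same original factor and retain the new slot $dx^j_1$ in position $j$. One also checks (though we shall not need it here) that $a_j(\alpha+j)=\alpha$.

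With this in hand, I apply the lemma step by step to the original zigzag. Namely, each equality
\[
a_{i_l}(dx^{1}_{k^{l}_1}\otimes\cdots\otimes dx^{n}_{k^{l}_n}) \;=\; a_{i_l}(dx^{1}_{k^{l+1}_1}\otimes\cdots\otimes dx^{n}_{k^{l+1}_n})
\]
yields, after $j$-dilatation and application of the commutation lemma, the equality
\[
a_{\sigma_j(i_l)}\bigl((dx^{1}_{k^{l}_1}\otimes\cdots\otimes dx^{n}_{k^{l}_n})+j\bigr) \;=\; a_{\sigma_j(i_l)}\bigl((dx^{1}_{k^{l+1}_1}\otimes\cdots\otimes dx^{n}_{k^{l+1}_n})+j\bigr),
\]
and all intermediate coordinates belong to $\text{dilat}_j(C_n)$ by definition of the dilatation. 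Thus $(a_{\sigma_j(i_1)},\dots,a_{\sigma_j(i_r)})$ is a zigzag of contractions in $\text{dilat}_j(C_n)$ connecting $dx^{i}_{k_{i}}+j$ to $dx^{i}_{k'_{i}}+j$. Since the two chosen coordinates were arbitrary, the connexity property holds for $\text{dilat}_j(C_n)$.

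The main obstacle is purely notational: one has to be meticulous about how indices in $\llbracket 1,n\rrbracket$ relabel into $\llbracket 1,n+1\rrbracket$ after the insertion of the new $j$-th slot, both for the directions of contraction (encoded by $\sigma_j$) and for the depth parameters. Once this bookkeeping is handled by the commutation lemma above, the rest of the argument is a direct transport of the zigzag from $C_n$ to $\text{dilat}_j(C_n)$.
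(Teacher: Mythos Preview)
Your proof is correct and follows essentially the same approach as the paper: you lift a zigzag $(a_{i_1},\dots,a_{i_r})$ in $C_n$ to a zigzag $(a_{\sigma_j(i_1)},\dots,a_{\sigma_j(i_r)})$ in $\text{dilat}_j(C_n)$ via the reindexing $\sigma_j(l)=l$ if $l<j$ and $\sigma_j(l)=l+1$ if $l\geq j$, which is precisely the paper's rule $i'_l$. Your commutation lemma makes explicit the verification that the paper leaves as ``easy to see.''
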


\begin{proof}
We have to prove that if two coordinates $dx^{i}_{k_{i}}+j$ and $dx^{i}_{k'_{i}}+j$ belong to $\text{dilat}_j(C_n)$ then there
is a zigzag of contractions in $\text{dilat}_j(C_n)$ between them. Consider a zigzag 
$(a_{i_{1}},a_{i_{2}},\cdots,a_{i_{r}})$  of contractions between the two coordinates $dx^{i}_{k_{i}}$ and $dx^{i}_{k'_{i}}$ 
 in $C_{n}$. If $l\in\llbracket 1,r\rrbracket$ write $i'_{l}=i_{l}$ if $i_{l}< j$ and 
$i'_{l}=i_{l}+1$ if $i_{l}\geq j$. Then it is easy to see that $(a_{i'_{1}},a_{i'_{2}},\cdots,a_{i'_{r}})$ is such zigzag. 
 \end{proof}
 
 The $j$-dilatation $\text{dilat}_j(C_n)$ of a connected configuration $C_n$ is written also 
 $1^{n}_{n+1,j}(C_n)=1^{n,\gamma=\pm}_{n+1,j}(C_n)=\text{dilat}_j(C_n)$ in order to 
 have a smell of the structure of cubical strict $\infty$-category that we shall put on 
 connected configurations.

\item Let $C_n=\{dx^{i}_{k_{i}}/k_1\in K_1,\cdots,k_n\in K_n, 
\forall i\in\llbracket 1,n\rrbracket, K_i\subset\mathbb{Z}\text{ is a finite set }\}$ a connected configuration.

Its $j$-presources are given by the sub-configuration $C^{j\text{-so}}_{n}\subset C_{n}$ built as follow : 

First for each coordinate $dx^{i}_{k_{i}}=dx^1_{k_1}\otimes\cdots\otimes dx^j_{k_j}\otimes\cdots dx^n_{k_n}$
in $C_n$ we consider the set $C^{j,dx^{i}_{k_{i}}}_{n}\subset C_{n}$ of all coordinates 
$dx^{i}_{k'_{i}}=dx^1_{k'_1}\otimes\cdots\otimes dx^j_{k'_j}\otimes\cdots dx^n_{k'_n}$
in $C_n$ such that for all $l\in\llbracket 1,n\rrbracket\setminus j$, $k'_l=k_l$. 
The set $C^{j,dx^{i}_{k_{i}}}_{n}\subset C_{n}$ is called a $j$-partition of
$C_n$. These partitions of $C_n$ form a finite set and they may not be connected. 
Of course if $dx^{i}_{k'_{i}}\in C^{j,dx^{i}_{k_{i}}}_{n}$ then 
$C^{j,dx^{i}_{k'_{i}}}_{n}=C^{j,dx^{i}_{k_{i}}}_{n}$. Thus when we consider the set 
$C^{j,dx^{i}_{k_{i}}}_{n}$ it means that we have chosen one coordinate
$dx^{i}_{k_{i}}$ representing this set and we use $dx^{i}_{k_{i}}$ to denote this
set. 

Let us fix a $j$-partition $C^{j,dx^{i}_{k_{i}}}_{n}$ of $C_n$. The integer
$\text{min}_j=min\{k'_j\in\mathbb{Z}/dx^{i}_{k'_{i}}\in C^{j,dx^{i}_{k_{i}}}_{n}\}$ provides a specific 
coordinate $dx^{i}_{\text{min}_j}\in C^{j,dx^{i}_{k_{i}}}_{n}$. We isolate these coordinates for all
$j$-partitions of $C_n$. They form the set $C^{j\text{-so}}_{n}\subset C_{n}$ of $j$-presources 
of $C_n$.

The $j$-contraction $contr_j(C^{j\text{-so}}_{n})$ of $C^{j\text{-so}}_{n}$ is given by the following set of coordinates in $\mathcal{Z}_{n-1}$ ::
$$contr_j(C^{j\text{-so}}_{n})=\{dx^{i}_{k_{i}}\setminus j/dx^{i}_{k_{i}}\in C^{j\text{-so}}_{n}\}$$

\begin{lemma}
The $j$-contraction $contr_j(C^{j\text{-so}}_{n})$ of $C^{j\text{-so}}_{n}$ is a connected 
$(n-1)$-configuration.
\end{lemma}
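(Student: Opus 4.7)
The plan is to transfer the connectedness of $C_n$ down to $contr_j(C^{j\text{-so}}_n)$ by editing any zigzag of contractions in $C_n$ into one that lies entirely in the contracted $(n-1)$-configuration. Given two coordinates of $contr_j(C^{j\text{-so}}_n)$, write them as $a_j(\alpha)$ and $a_j(\beta)$ with $\alpha,\beta\in C^{j\text{-so}}_n$.

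First, apply the connectedness hypothesis on $C_n$ to obtain a zigzag $(a_{i_1},\ldots,a_{i_r})$ together with intermediate coordinates $\gamma_1=\alpha,\gamma_2,\ldots,\gamma_{r+1}=\beta$ in $C_n$ satisfying $a_{i_l}(\gamma_l)=a_{i_l}(\gamma_{l+1})$ for every $l$. For each $l$, replace $\gamma_l$ by the unique $j$-presource $\tilde\gamma_l$ of the $j$-partition $C^{j,\gamma_l}_n$; note that $\tilde\gamma_1=\alpha$ and $\tilde\gamma_{r+1}=\beta$ since $\alpha$ and $\beta$ are already $j$-presources. Because $\gamma_l$ and $\tilde\gamma_l$ agree in every direction other than $j$, we have the crucial equality $a_j(\gamma_l)=a_j(\tilde\gamma_l)$ in $\mathcal{Z}_{n-1}$.

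Now inspect each step of the zigzag. If $i_l=j$, then $\gamma_l$ and $\gamma_{l+1}$ agree in every non-$j$ direction, so they lie in the same $j$-partition and $\tilde\gamma_l=\tilde\gamma_{l+1}$; this step collapses and can be deleted. If $i_l\neq j$, then for every direction $d\notin\{i_l,j\}$ the chain $\tilde\gamma_l(d)=\gamma_l(d)=\gamma_{l+1}(d)=\tilde\gamma_{l+1}(d)$ shows that $\tilde\gamma_l$ and $\tilde\gamma_{l+1}$ can disagree only in the directions $i_l$ and $j$. Setting $i'_l=i_l$ if $i_l<j$ and $i'_l=i_l-1$ if $i_l>j$, exactly as in the re-indexing used in the previous lemma on dilatations, yields $a_{i'_l}\bigl(a_j(\tilde\gamma_l)\bigr)=a_{i'_l}\bigl(a_j(\tilde\gamma_{l+1})\bigr)$.

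Concatenating the surviving steps produces a zigzag of contractions in $\mathcal{Z}_{n-1}$ passing through the intermediate coordinates $a_j(\tilde\gamma_{l_k})$, all of which lie in $contr_j(C^{j\text{-so}}_n)$, and connecting $a_j(\alpha)$ to $a_j(\beta)$. The main subtlety, and the one obstacle worth flagging, is that the intermediate coordinates of the chosen zigzag in $C_n$ need not themselves be $j$-presources; the remedy is precisely the replacement $\gamma_l\mapsto\tilde\gamma_l$, which preserves all non-$j$ coordinates and hence the zigzag coincidences once a further contraction by $j$ is applied.
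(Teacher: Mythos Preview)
Your proof is correct, and in fact the paper states this lemma without proof; the nearest thing in the paper is the proof of the analogous statement for the $j$-predomain $C^{j\text{-dom}}_n$, which simply takes a zigzag in $C_n$ between the two given coordinates and re-indexes each $a_{i_l}$ to $a_{i'_l}$ (with $i'_l=i_l$ if $i_l\leq j$ and $i'_l=i_l-1$ if $i_l>j$), asserting that this yields a zigzag in the contracted configuration.

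Your argument follows the same re-indexing idea but is more careful on the one point the paper's terse proofs leave implicit: the intermediate coordinates $\gamma_l$ of the original zigzag in $C_n$ need not lie in $C^{j\text{-so}}_n$, so one cannot directly contract the zigzag and stay inside $contr_j(C^{j\text{-so}}_n)$. Your replacement $\gamma_l\mapsto\tilde\gamma_l$ by the $j$-presource of the same $j$-partition, together with the observation that steps with $i_l=j$ collapse (same $j$-partition, hence same presource) while steps with $i_l\neq j$ survive after applying $a_j$, is exactly the missing verification. This is the right fix, and it also clarifies why the re-indexing convention $i'_l=i_l$ for $i_l<j$ and $i'_l=i_l-1$ for $i_l>j$ appears: it is precisely the effect of first contracting direction $j$ on the remaining direction labels. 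In short, your approach is the paper's approach made rigorous.
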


\begin{definition}
If $C_n$ is a connected configuration, its $j$-source is the connected $(n-1)$-configuration
$contr_j(C^{j\text{-so}}_{n})$. We denote it by $\sigma^{n}_{n-1,j}(C_n)$
\end{definition}

Its $j$-pretargets are given by the sub-configuration $C^{j\text{-tar}}_{n}\subset C_{n}$ built as follow : 

As above we fix a $j$-partition $C^{j,dx^{i}_{k_{i}}}_{n}$ of $C_n$.
The integer
$\text{max}_j=max\{k'_j\in\mathbb{Z}/dx^{i}_{k'_{i}}\in C^{j,dx^{i}_{k_{i}}}_{n}\}$ provides a specific 
coordinate $dx^{i}_{\text{max}_j}\in C^{j,dx^{i}_{k_{i}}}_{n}$. We isolate these coordinates for all
$j$-partitions of $C_n$. They form the set $C^{j\text{-tar}}_{n}\subset C_{n}$ of $j$-pretargets 
of $C_n$.

The $j$-contraction $contr_j(C^{j\text{-tar}}_{n})$ of $C^{j\text{-tar}}_{n}$ is given by the following set of coordinates in $\mathcal{Z}_{n-1}$ ::
$$contr_j(C^{j\text{-tar}}_{n})=\{dx^{i}_{k_{i}}\setminus j/dx^{i}_{k_{i}}\in C^{j\text{-tar}}_{n}\}$$

\begin{lemma}
The $j$-contraction $contr_j(C^{j\text{-tar}}_{n})$ of $C^{j\text{-tar}}_{n}$ is a connected 
$(n-1)$-configuration.
\end{lemma}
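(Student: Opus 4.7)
The plan is to argue in exact parallel with the $j$-source lemma stated just above (whose proof in turn parallels the dilatation lemma already proved in the text). The underlying principle is that zigzags of contractions in $C_n$ can be projected to zigzags in $contr_j(C^{j\text{-tar}}_{n})$ via the $j$-contraction map, after a suitable reindexing of directions.

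First I would observe that for any two coordinates lying in the same $j$-partition of $C_n$, their $j$-contractions coincide, because within a $j$-partition only the $j$-coordinate varies. Consequently the map $C_n \to \mathcal{Z}_{n-1}$ sending $dx^{i}_{k_{i}} \mapsto dx^{i}_{k_{i}}\setminus j$ factors through the set of $j$-partitions of $C_n$. Restricting this map to $C^{j\text{-tar}}_{n}$ gives a bijection between pretargets and their $j$-contractions, since each $j$-partition has exactly one pretarget (its unique coordinate of maximal $j$-depth, which exists because inside a partition the $j$-values are distinct and $K_j$ is finite).

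Next, given two elements $p\setminus j$ and $q\setminus j$ of $contr_j(C^{j\text{-tar}}_{n})$ arising from pretargets $p,q \in C^{j\text{-tar}}_{n}$, connectedness of $C_n$ supplies a zigzag $(a_{i_{1}},\ldots,a_{i_{r}})$ of contractions through coordinates $y_{1}=p,\,y_{2},\ldots,y_{r+1}=q$ inside $C_n$. I would then construct the required zigzag in $contr_j(C^{j\text{-tar}}_{n})$ step by step, imitating the dilatation proof but with a reverse reindexing. For each $l\in\llbracket 1,r\rrbracket$: if $i_{l}=j$ then $y_{l}$ and $y_{l+1}$ sit in the same $j$-partition, so $y_{l}\setminus j = y_{l+1}\setminus j$ and the step is degenerate and can be dropped; if $i_{l}\neq j$, set $i'_{l}=i_{l}$ when $i_{l}<j$ and $i'_{l}=i_{l}-1$ when $i_{l}>j$, and observe that $a_{i'_{l}}(y_{l}\setminus j) = a_{i'_{l}}(y_{l+1}\setminus j)$ follows directly from $a_{i_{l}}(y_{l}) = a_{i_{l}}(y_{l+1})$. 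By the first observation each intermediate $y_{l}\setminus j$ lies in $contr_j(C^{j\text{-tar}}_{n})$, since it equals the $j$-contraction of the (unique) pretarget of the $j$-partition containing $y_{l}$.

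The non-degenerate steps thus produced assemble into a zigzag of contractions in $contr_j(C^{j\text{-tar}}_{n})$ joining $p\setminus j$ to $q\setminus j$, which is exactly the required connexity property. The argument is purely combinatorial and I do not expect a real obstacle; the only subtle point to track carefully is that each intermediate contracted coordinate genuinely belongs to $contr_j(C^{j\text{-tar}}_{n})$, and this is precisely what the partition-level factorization of the $j$-contraction ensures.
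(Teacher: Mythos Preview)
Your proposal is correct and follows exactly the reindexing strategy the paper uses for the analogous statements (the dilatation lemma and the later predomain/precodomain lemmas): lift a zigzag from $C_n$ and reindex each contraction direction via $i'_l=i_l$ for $i_l<j$ and $i'_l=i_l-1$ for $i_l>j$. The paper in fact states this particular lemma without proof, so you are supplying what the paper omits; your treatment is slightly more careful than the paper's predomain proof in that you explicitly drop the degenerate steps where $i_l=j$ and you justify, via the partition-level factorization of the $j$-contraction, why every intermediate $y_l\setminus j$ lands in $contr_j(C^{j\text{-tar}}_{n})$.
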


\begin{definition}
If $C_n$ is a connected configuration, its $j$-target is the connected $(n-1)$-configuration
$contr_j(C^{j\text{-tar}}_{n})$. We denote it by $\tau^{n}_{n-1,j}(C_n)$
\end{definition}

\item Let $C_n=\{dx^{i}_{k_{i}}/k_1\in K_1,\cdots,k_n\in K_n, 
\forall i\in\llbracket 1,n\rrbracket, K_i\subset\mathbb{Z}\text{ is a finite set }\}$ a connected configuration. 
A $j$-move of $C_n$ is given by a new $n$-configuration $j\text{-move}(C_n)$ built as follow :

Consider a $j$-partition $C^{j,dx^{i}_{k_{i}}}_{n}$ of $C_n$ as above.
If $dx^{i}_{k'_{i}}=dx^1_{k_1}\otimes\cdots\otimes dx^j_{k'_j}\otimes\cdots dx^n_{k_n}$ is
in $C^{j,dx^{i}_{k_{i}}}_{n}\subset C_{n}$ then define the new coordinate
$\text{j-trans}_k(dx^{i}_{k'_{i}})=dx^1_{k_1}\otimes\cdots\otimes dx^j_{k'_j+k}\otimes\cdots dx^n_{k_n}$
$(k\in\mathbb{Z})$
as the translation of $dx^{i}_{k'_{i}}$ by the integer $k$ along the direction $j$. Then 
put 
$$\text{j-trans}_k(C^{j,dx^{i}_{k_{i}}}_{n})=\{\text{j-trans}_k(dx^{i}_{k'_{i}})\in\mathcal{Z}_n/dx^{i}_{k'_{i}}\in C^{j,dx^{i}_{k_{i}}}_{n}\}$$

Now suppose that $C_n$ has $m$ $j$-partitions $C^{j,dx^{i}_{k_{i}}}_{n,k}$ 
($m\in\mathbb{N}\text{ and }k\in\llbracket 1,m\rrbracket$). For each
$j$-partitions chose an integer $l_k\in\mathbb{Z}$ ($k\in\llbracket 1,m\rrbracket$). Now for each of these partitions $C^{j,dx^{i}_{k_{i}}}_{n,k}$
of $C_n$, consider their different translations by the integers $l_k$ along the direction $j$ : 
$$\text{j-trans}_{l_k}(C^{j,dx^{i}_{k_{i}}}_{n})$$
The new set of coordinates :
 $$j\text{-move}_{l_1,\cdots,l_m}(C_n):=\underset{k\in\llbracket 1,m\rrbracket}\bigcup\text{j-trans}_{l_k}(C^{j,dx^{i}_{k_{i}}}_{n,k})$$
 is called a $j$-move of $C_n$. Such $j$-move of $C_n$ is denoted 
 $j\text{-move}(C_n)$ when no confusion occur for its underlying translations
 $l_k\in\mathbb{Z}$. A $j$-move of $C_n$ may not be connected. Such $j$-moves are central tools
 to build compositions $\circ^{n}_{n-1,j}$ between connected $n$-configurations.

 \item Now we are going to define some specific $j$-moves of connected configurations which 
 play a central role for the definition of the partial compositions $\circ^{n}_{n-1,j}$ for the
 cubical strict $\infty$-category of connected configurations. 
 
 Let $C_n=\{dx^{i}_{k_{i}}/k_1\in K_1,\cdots,k_n\in K_n, 
\forall i\in\llbracket 1,n\rrbracket, K_i\subset\mathbb{Z}\text{ is a finite set }\}$ 
and $C'_n=\{dx^{i}_{k'_{i}}/k'_1\in K'_1,\cdots,k'_n\in K'_n, 
\forall i\in\llbracket 1,n\rrbracket, K'_i\subset\mathbb{Z}\text{ is a finite set }\}$ 
be two connected $n$-configurations such that $\tau^{n}_{n-1,j}(C_n)=\sigma^{n}_{n-1,j}(C'_n)$.

Our goal is to define a new connected configuration $C'_n\circ^{n}_{n-1,j}C_n$. Thanks to the definitions
of the targets $\tau^{n}_{n-1,j}$ and the sources $\sigma^{n}_{n-1,j}$ we know that such connected
configurations $C_n$ and $C'_n$ must have respectively the same number $m$ of $j$-targets
and $j$-sources, and thus $C_n$ and $C'_n$ must have the same number $m$ of $j$-partitions. In fact
any $j$-partition $C^{j,dx^{i}_{k_{i}}}_{n,k}$ ($k\in\llbracket 1,m\rrbracket$) of $C_n$
correspond to another $j$-partition $C'^{j,dx^{i}_{k'_{i}}}_{n,k}$ of $C'_n$, in the sense
that if $dx^{i}_{k_{i}}=dx^1_{k_1}\otimes\cdots\otimes dx^j_{k_j}\otimes\cdots dx^n_{k_n}$ is
in $C^{j,dx^{i}_{k_{i}}}_{n,k}$ and if $dx^{i}_{k'_{i}}=dx^1_{k'_1}\otimes\cdots\otimes dx^j_{k'_j}\otimes\cdots dx^n_{k'_n}$ is
in $C'^{j,dx^{i}_{k'_{i}}}_{n,k}$ then for all $l\in\llbracket 1,n\rrbracket\setminus j$ we have $k_l=k'_l$. 

We are going to define a $j$-move of $C'_n$, denoted by $j\text{-move}(C'_n)$, in order to glue $C_n$ with $C'_n$ along
the direction $j$. 

Let us fix a $k\in\llbracket 1,m\rrbracket$, i.e we work with the $j$-partition
$C^{j,dx^{i}_{k_{i}}}_{n,k}$ of $C_n$ and its corresponding $j$-partition $C'^{j,dx^{i}_{k'_{i}}}_{n,k}$ of $C'_n$.
In order to build $j\text{-move}(C'_n)$ we are going to define $\text{j-trans}_{l_k}(C'^{j,dx^{i}_{k'_{i}}}_{n,k})$ for 
such $k\in\llbracket 1,m\rrbracket$, and then define :

$$j\text{-move}(C'_n):=\underset{k\in\llbracket 1,m\rrbracket}\bigcup\text{j-trans}_{l_k}(C'^{j,dx^{i}_{k'_{i}}}_{n,k})$$

 We denote $dx^{i}_{\text{min}^{k}_j}\in C'^{j,dx^{i}_{k'_{i}}}_{n}$ the coordinate of the $j$-partition
 $C'^{j,dx^{i}_{k'_{i}}}_{n,k}$ of $C'_n$ which is an element of the set $C'^{j\text{-so}}_{n}$ 
 i.e it is a specific $j$-presource of $C'_n$. Also we denote $dx^{i}_{\text{max}^{k}_j}\in C^{j,dx^{i}_{k_{i}}}_{n}$ 
 the coordinate of the $j$-partition
 $C^{j,dx^{i}_{k_{i}}}_{n,k}$ of $C_n$ which is an element of the set $C^{j\text{-tar}}_{n}$ 
 i.e it is a specific $j$-pretarget of $C_n$. Thus for this fixed $k\in\llbracket 1,m\rrbracket$ we define
 the following $j$-translation : 
 
 \begin{itemize}
 \item If $\text{min}^{k}_j=\text{max}^{k}_j$ then we do the $j$-translation of $C'^{j,dx^{i}_{k'_{i}}}_{n,k}$
 with the translation $l_k=1$ : $\text{j-trans}_{1}(C'^{j,dx^{i}_{k'_{i}}}_{n,k})$. 
 \item If either $\text{min}^{k}_j<\text{max}^{k}_j$ or $\text{min}^{k}_j>\text{max}^{k}_j$ then we do the $j$-translation 
 of $C'^{j,dx^{i}_{k'_{i}}}_{n,k}$ with the translation $l_k=\text{max}^{k}_j-\text{min}^{k}_j+1$ : $\text{j-trans}_{l_k}(C'^{j,dx^{i}_{k'_{i}}}_{n,k})$.  
 \end{itemize}  
 
 Now we add $C_n$ with the $n$-configuration $\text{j-trans}_{l_k}(C'^{j,dx^{i}_{k'_{i}}}_{n,k})$ i.e we do the 
 $j$-pasting of $C_n$ with $\text{j-trans}_{l_k}(C'^{j,dx^{i}_{k'_{i}}}_{n,k})$ and we denote this new connected
 $n$-configuration by : $C_n+\text{j-trans}_{l_k}(C'^{j,dx^{i}_{k'_{i}}}_{n,k})$.
 
 When doing that for all
$k\in\llbracket 1,m\rrbracket$ we obtain a new connected $n$-configuration : 
$$C_n+\text{j-trans}_{l_1}(C'^{j,dx^{i}_{k'_{i}}}_{n,1})+\cdots+\text{j-trans}_{l_k}(C'^{j,dx^{i}_{k'_{i}}}_{n,k})+\cdots+\text{j-trans}_{l_m}(C'^{j,dx^{i}_{k'_{i}}}_{n,m})$$
that we denote by $C'_n\circ^{n}_{n-1,j}C_n$.

Let us denote by $\mathbb{C}\text{on-}\mathbb{C}\text{onf}_n$ the set of connected $n$-configurations
of $\mathcal{Z}_n$. Also let us denote by $\mathbb{C}\text{on-}\mathbb{C}\text{onf}$ the set
of all $n$-configurations for all $n\in\mathbb{N}$. The operations $C'_n\circ^{n}_{n-1,j}C_n$ plus the one 
$1^{n}_{n+1,j}(C_n)=1^{n,\gamma=\pm}_{n+1,j}(C_n)=\text{dilat}_j(C_n)$
(defined above) put on the following cubical set

$$\begin{tikzcd}
 \cdots\mathbb{C}\text{on-}\mathbb{C}\text{onf}_n  \arrow[r, yshift=1.5ex,"\sigma^{n}_{n-1,1}"]
  \arrow[r, yshift=-1.5ex,"\tau^{n}_{n-1,1}"{below}]
  \arrow[r, yshift=4.5ex,dotted] \arrow[r, yshift=-4.5ex,dotted]
  \arrow[r, yshift=7.5ex,"\sigma^{n}_{n-1,i}"] 
  \arrow[r, yshift=-7.5ex,"\tau^{n}_{n-1,i}"{below}]
  \arrow[r, yshift=10.5ex,dotted] 
  \arrow[r, yshift=-10.5ex,dotted]      
\arrow[r, yshift=13.5ex,"\sigma^{n}_{n-1,n}"]
\arrow[r, yshift=-13.5ex,"\tau^{n}_{n-1,n}"{below}]  
& \mathbb{C}\text{on-}\mathbb{C}\text{onf}_{n-1} 
 \cdots \mathbb{C}\text{on-}\mathbb{C}\text{onf}_{4}\arrow[r, yshift=1.5ex,"\sigma^{4}_{3,1}"]
  \arrow[r, yshift=-1.5ex,"\tau^{4}_{3,1}"{below}]
  \arrow[r, yshift=4.5ex,"\sigma^{4}_{3,2}"] \arrow[r, yshift=-4.5ex,"\tau^{4}_{3,2}"{below}]
  \arrow[r, yshift=7.5ex,"\sigma^{4}_{3,3}"] 
  \arrow[r, yshift=-7.5ex,"\tau^{4}_{3,3}"{below}]
  \arrow[r, yshift=10.5ex,"\sigma^{4}_{3,4}"] 
  \arrow[r, yshift=-10.5ex,"\tau^{4}_{3,4}"{below}]      & 
  \mathbb{C}\text{on-}\mathbb{C}\text{onf}_{3}\arrow[r, yshift=1.5ex,"\sigma^{3}_{2,1}"]
  \arrow[r, yshift=-1.5ex,"\tau^{3}_{2,1}"{below}]
  \arrow[r, yshift=4.5ex,"\sigma^{3}_{2,2}"] 
  \arrow[r, yshift=-4.5ex,"\tau^{3}_{2,2}"{below}]
  \arrow[r, yshift=7.5ex,"\sigma^{3}_{2,3}"] 
  \arrow[r, yshift=-7.5ex,"\tau^{3}_{2,3}"{below}]
      & \mathbb{C}\text{on-}\mathbb{C}\text{onf}_{2}  
      \arrow[r, yshift=1.5ex,"\sigma^{2}_{1,1}"]
      \arrow[r, yshift=-1.5ex,"\tau^{2}_{1,1}"{below}]
      \arrow[r, yshift=4.5ex,"\sigma^{2}_{1,2}"] 
      \arrow[r, yshift=-4.5ex,"\tau^{2}_{1,2}"{below}]
      &\mathbb{C}\text{on-}\mathbb{C}\text{onf}_{1}
      \arrow[r, yshift=1.5ex,"\sigma^{1}_{0}"]\arrow[r, yshift=-1.5ex,"\tau^{1}_{0}"{below}]   
        & \mathbb{C}\text{on-}\mathbb{C}\text{onf}_{0}
 \end{tikzcd}$$
 
 a structure of cubical strict $\infty$-category with connections. As we see degeneracies
 which are defined by dilatations of coordinates collapse classical degeneracies and
 connections when $n\geq 1$. Thus this structure is interesting to have a real
 first smell of the formalism that we are going to build in order to reach cubical pasting diagrams. 
 In fact we shall see that cubical pasting diagrams are built by using a reacher version of this formalism of
 connected configurations. This richness allows to distinguished well degeneracies which won't be
 collapsed, but also shall give a more precise view of sources, targets and compositions.


\item If $C_n=\{dx^{i}_{k_{i}}/k_1\in K_1,\cdots,k_n\in K_n, 
\forall i\in\llbracket 1,n\rrbracket, K_i\subset\mathbb{Z}\text{ is a finite set }\}$ is a connected $n$-configuration then 
its $j$-\textit{gluing locus} is the set $C^{j\text{-gluing}}_{n}$ of pairs $(dx^{i}_{k_{i}},dx^{i}_{k'_{i}})$ of coordinates in $C_n$ such 
that $k'_{j}=k_{j}+1$. 

\item Diagrams of the $j$-gluing locus $C^{j\text{-gluing}}_{n}$ of a connected configuration $C_n$ are given by formal diagrams
 
 \begin{tikzcd}
dx^{i}_{k_{i}}&&dx^{i}_{k'_{i}}\\
&dx^{i}_{k_{i}}\setminus j=dx^{i}_{k'_{i}}\setminus j
\arrow[lu,"t^{n}_{n-1,j}"]\arrow[ru,"s^{n}_{n-1,j}"]
\end{tikzcd} 

\item If $C_n=C^{1}_n+\cdots+C^{l}_n+\cdots+C^{r}_n$ is an $n$-configuration such that 
$r=\sharp\{\text{Connected components of } C_n\}$ and for each $l\in\llbracket 1,r\rrbracket$,
$C^{l}_n$ denote the connected configurations inside $C_n$, then its $j$-\textit{gluing locus} is the set 

$$C^{j\text{-gluing}}_{n}=\underset{l\in\llbracket 1,r\rrbracket}\bigcup C^{l,j\text{-gluing}}_{n}$$ 

\item Diagrams of the $j$-gluing locus $C^{j\text{-gluing}}_{n}$ of a configuration 
 $C_n=C^{1}_n+\cdots+C^{l}_n+\cdots+C^{r}_n$ 
 are given by the union of diagrams of each $j$-gluing locus $C^{l,j\text{-gluing}}_{n}$ of its 
 connected components $C^{l}_n$. 

\item Let $C_n=\{dx^{i}_{k_{i}}/k_1\in K_1,\cdots,k_n\in K_n, 
\forall i\in\llbracket 1,n\rrbracket, K_i\subset\mathbb{Z}\text{ is a finite set }\}$ a connected configuration.

Its $j$-predomain is given by the sub-configuration $C^{j\text{-dom}}_{n}\subset C_{n}$ such that 
if $dx^{i}_{k_{i}}=dx^1_{k_1}\otimes\cdots\otimes dx^j_{k_j}\otimes\cdots dx^n_{k_n}$ belongs
to $C^{j\text{-dom}}_{n}$ then the coordinate $dx^1_{k_1}\otimes\cdots\otimes dx^j_{k_j-1}\otimes\cdots dx^n_{k_n}$
doesn't belong to $C_n$.

\begin{lemma}
The $j$-contraction $\text{contr}_{j}(C^{j\text{-dom}}_{n})$ of the $j$-predomain $C^{j\text{-dom}}_{n}$ of $C_{n}$ just above is a 
connected $(n-1)$-configuration. 
\end{lemma}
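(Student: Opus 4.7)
The plan is to lift zigzags of contractions from $\text{contr}_{j}(C^{j\text{-dom}}_{n})$ back to $C_n$, use connectedness of $C_n$, and then push the resulting zigzag down again.

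First I would establish the following elementary observation: every $j$-partition $C^{j,dx^{i}_{k_{i}}}_{n}$ of $C_n$ contains at least one element of $C^{j\text{-dom}}_n$. Indeed, if $dx^{i}_{\text{min}_{j}}$ is the coordinate of minimal $j$-depth in that partition, then the coordinate obtained by decrementing $k_j$ by $1$ agrees with $dx^{i}_{\text{min}_j}$ in all directions other than $j$; hence it would belong to the same $j$-partition if it were in $C_n$, contradicting minimality. So $dx^{i}_{\text{min}_j} \in C^{j\text{-dom}}_n$. Moreover every element of $C^{j,dx^{i}_{k_{i}}}_{n} \cap C^{j\text{-dom}}_{n}$ projects under $\setminus j$ to the \emph{same} $(n-1)$-coordinate, namely the common value of the coordinates in directions $\neq j$. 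Consequently $\text{contr}_{j}(C^{j\text{-dom}}_n)$ contains exactly one $(n-1)$-coordinate per $j$-partition of $C_n$.

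Next, given two coordinates $\delta_1, \delta_2 \in \text{contr}_{j}(C^{j\text{-dom}}_n)$, pick lifts $c_1, c_2 \in C^{j\text{-dom}}_n \subset C_n$ with $c_1 \setminus j = \delta_1$ and $c_2 \setminus j = \delta_2$. By connectedness of $C_n$, there exists a zigzag of contractions $(a_{i_1}, \ldots, a_{i_r})$ realised by coordinates $c_1 = e_1, e_2, \ldots, e_{r+1} = c_2$ in $C_n$ satisfying $a_{i_l}(e_l) = a_{i_l}(e_{l+1})$ for each $l$. For each $l$ let $d_l := e_l \setminus j \in \mathcal{Z}_{n-1}$; by the first observation $d_l$ belongs to $\text{contr}_{j}(C^{j\text{-dom}}_n)$ (it is the image of the predomain representative of the $j$-partition of $e_l$). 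Clearly $d_1 = \delta_1$ and $d_{r+1} = \delta_2$.

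Finally I would verify that the sequence $(d_l)$ gives a zigzag of contractions in $\text{contr}_{j}(C^{j\text{-dom}}_n)$. For each $l$ there are two cases. If $i_l = j$, then $e_l$ and $e_{l+1}$ lie in the same $j$-partition, so $d_l = d_{l+1}$ and this step may simply be omitted. If $i_l \neq j$, set $i'_l := i_l$ when $i_l < j$ and $i'_l := i_l - 1$ when $i_l > j$; since $e_l$ and $e_{l+1}$ agree in all directions except $i_l$, their $j$-contractions $d_l$ and $d_{l+1}$ agree in all directions of $\mathcal{Z}_{n-1}$ except $i'_l$, i.e.\ $a_{i'_l}(d_l) = a_{i'_l}(d_{l+1})$. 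Concatenating the surviving steps produces a zigzag $(a_{i'_{l_1}}, \ldots, a_{i'_{l_s}})$ of contractions connecting $\delta_1$ and $\delta_2$ inside $\text{contr}_{j}(C^{j\text{-dom}}_n)$, which proves connectedness. The only subtle point—and thus the step that needs the most care—is confirming that every intermediate $d_l$ actually lies in $\text{contr}_{j}(C^{j\text{-dom}}_n)$, which is precisely what the first observation guarantees.
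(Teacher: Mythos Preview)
Your proof is correct and follows essentially the same approach as the paper: lift to $C_n$, use its connectedness to obtain a zigzag, and push that zigzag down with suitably re-indexed contraction directions. You are in fact more careful than the paper's brief argument, since you handle the case $i_l=j$ explicitly (by dropping that step) and verify via your first observation that every intermediate $d_l$ actually lies in $\text{contr}_j(C^{j\text{-dom}}_n)$, two points the paper leaves implicit.
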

\begin{proof}
We have to prove that if two coordinates $dx^{i}_{k_{i}}\setminus j$ and $dx^{i}_{k'_{i}}\setminus j$ belong to $\text{contr}_{j}(C^{j\text{-dom}}_{n})$ then there
is a zigzag of contractions in $\text{contr}_{j}(C^{j\text{-dom}}_{n})$ between them. Consider a zigzag 
$(a_{i_{1}},a_{i_{2}},\cdots,a_{i_{r}})$  of contractions between the two coordinates $dx^{i}_{k_{i}}$ and $dx^{i}_{k'_{i}}$ 
of $C^{j\text{-dom}}_{n}$ in $C_{n}$. If $l\in\llbracket 1,r\rrbracket$ write $i'_{l}=i_{l}$ if $i_{l}\leq j$ and 
$i'_{l}=i_{l}-1$ if $i_{l}>j$. Then it is easy to see that $(a_{i'_{1}},a_{i'_{2}},\cdots,a_{i'_{r}})$ is such zigzag.
\end{proof}

The $j$-contraction $\text{contr}_{j}(C^{j\text{-dom}}_{n})$ of the $j$-predomain $C^{j\text{-dom}}_{n}$ of $C_{n}$
is called the $j$-domain of $C_n$ and is written $\Sigma^{n}_{n-1,j}(C_n)$.

The $j$-precodomain of $C_n$ is given by the sub-configuration $C^{j\text{-codom}}_{n}\subset C_{n}$ such that 
if $dx^{i}_{k_{i}}=dx^1_{k_1}\otimes\cdots\otimes dx^j_{k_j}\otimes\cdots dx^n_{k_n}$ belongs
to $C^{j\text{-codom}}_{n}$ then the coordinate $dx^1_{k_1}\otimes\cdots\otimes dx^j_{k_j+1}\otimes\cdots dx^n_{k_n}$
doesn't belong to $C_n$.

\begin{lemma}
The $j$-contraction $\text{contr}_{j}(C^{j\text{-codom}}_{n})$ of the $j$-precodomain $C^{j\text{-codom}}_{n}$ of $C_{n}$ just above is a 
connected $(n-1)$-configuration. 
\end{lemma}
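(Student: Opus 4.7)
The plan is to mirror the proof of the preceding lemma for the $j$-predomain, since the $j$-precodomain $C^{j\text{-codom}}_n$ is defined symmetrically to the $j$-predomain $C^{j\text{-dom}}_n$ (one replaces the condition $dx^1_{k_1}\otimes\cdots\otimes dx^j_{k_j-1}\otimes\cdots\otimes dx^n_{k_n}\notin C_n$ by $dx^1_{k_1}\otimes\cdots\otimes dx^j_{k_j+1}\otimes\cdots\otimes dx^n_{k_n}\notin C_n$). All the combinatorial ingredients are identical up to this sign flip, so the zigzag-reindexing argument should go through verbatim.

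Concretely, first I would take two coordinates $dx^{i}_{k_{i}}\setminus j$ and $dx^{i}_{k'_{i}}\setminus j$ of $\text{contr}_{j}(C^{j\text{-codom}}_{n})$ and lift them to their representatives $dx^{i}_{k_{i}},dx^{i}_{k'_{i}}\in C^{j\text{-codom}}_{n}\subset C_{n}$. Since $C_n$ is by hypothesis a connected $n$-configuration, there exists a zigzag of contractions $(a_{i_1},a_{i_2},\ldots,a_{i_r})$ in $C_n$ joining $dx^{i}_{k_{i}}$ to $dx^{i}_{k'_{i}}$, passing through intermediate coordinates $dx^1_{k^l_1}\otimes\cdots\otimes dx^n_{k^l_n}\in C_n$ for $l\in\llbracket 1,r+1\rrbracket$.

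Next, I would transport this zigzag to $\mathcal{Z}_{n-1}$ by the same reindexing rule used for the predomain: set $i'_{l}=i_{l}$ if $i_{l}\leq j$ and $i'_{l}=i_{l}-1$ if $i_{l}>j$. The sequence $(a_{i'_1},a_{i'_2},\ldots,a_{i'_r})$ then yields a zigzag of contractions in $\text{contr}_j(C^{j\text{-codom}}_n)$ between $dx^{i}_{k_{i}}\setminus j$ and $dx^{i}_{k'_{i}}\setminus j$, obtained by applying the $j$-contraction to each intermediate coordinate of the zigzag in $C_n$ and re-indexing directions to account for the removal of direction $j$. The verification that the contracted equalities $a_{i'_l}(dx^i_{k^l_i}\setminus j)=a_{i'_l}(dx^i_{k^{l+1}_i}\setminus j)$ follow from the original $a_{i_l}(dx^i_{k^l_i})=a_{i_l}(dx^i_{k^{l+1}_i})$ is the same straightforward commutation of partial contractions as in the predomain case.

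The only point that requires care, and which I expect to be the main (mild) obstacle, is checking that the reindexed zigzag genuinely lies inside $\text{contr}_j(C^{j\text{-codom}}_n)$ rather than merely inside $\text{contr}_j(C_n)$. This is where the definition of the $j$-precodomain is used: each intermediate point of the zigzag must itself be the image of some element of $C^{j\text{-codom}}_n$. This is handled exactly as in the preceding predomain lemma, with the condition ``no cube of $C_n$ with $j$-coordinate $k_j-1$'' replaced by ``no cube of $C_n$ with $j$-coordinate $k_j+1$''; since the $j$-contraction forgets the $j$-th slot and the reindexing skips $j$ entirely, the boundary condition defining $C^{j\text{-codom}}_n$ is preserved along the zigzag. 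Hence $\text{contr}_{j}(C^{j\text{-codom}}_{n})$ is a connected $(n-1)$-configuration, as desired.
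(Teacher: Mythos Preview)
Your proposal is correct and matches the paper's own treatment: the paper simply states ``The proof is the same'' and defers to the predomain argument, which is exactly the mirroring strategy you outline with the reindexing rule $i'_l=i_l$ if $i_l\leq j$ and $i'_l=i_l-1$ if $i_l>j$. If anything, you are more careful than the paper in flagging the point that the intermediate coordinates of the reindexed zigzag must lie in $\text{contr}_j(C^{j\text{-codom}}_n)$ and not merely in $\text{contr}_j(C_n)$; the paper's predomain proof dismisses this with ``it is easy to see'', so your added scrutiny is welcome rather than a deviation.
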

The proof is the same. The $j$-contraction $\text{contr}_{j}(C^{j\text{-codom}}_{n})$ of the $j$-precodomain $C^{j\text{-codom}}_{n}$ of $C_{n}$
is called the $j$-codomain of $C_n$ and is written $\text{T}^{n}_{n-1,j}(C_n)$.

\item Diagrams of the $j$-predomain $C^{j\text{-dom}}_{n}$ of a connected configuration $C_n$ are given by the 
formal arrows

\begin{tikzcd}
dx^{i}_{k_{i}}\setminus j\arrow[rr,"s^{n}_{n-1,j}"]&&dx^{i}_{k_{i}}
\end{tikzcd}
 
 where $dx^{i}_{k_{i}}$ belongs to $C^{j\text{-dom}}_{n}$, and diagrams of the $j$-precodomain 
 $C^{j\text{-codom}}_{n}$ of a connected configuration $C_n$ are given by the 
formal arrows

\begin{tikzcd}
dx^{i}_{k_{i}}\setminus j\arrow[rr,"t^{n}_{n-1,j}"]&&dx^{i}_{k_{i}}
\end{tikzcd}
 
 where $dx^{i}_{k_{i}}$ belongs to $C^{j\text{-codom}}_{n}$.

\item Diagrams of the $j$-predomain $C^{j\text{-dom}}_{n}$ of a configuration $C_n$ are given by the 
the union of all diagrams of the $j$-predomain of its connected components, and diagrams of the $j$-precodomain 
 $C^{j\text{-codom}}_{n}$ of a configuration $C_n$ are given by the union of all diagrams of the 
 $j$-precodomain of its connected components. 

\item A crucial and straightforward fact is that given a coordinate $dx^i_{k_i}$ in $\mathcal{Z}_{n}$, it 
has a \textit{trivial structure} of $n$-cubical set where sources and targets are defined by contractions :
\begin{itemize}
\item $s^{n}_{n-1,j}(dx^i_{k_i})=t^{n}_{n-1,j}(dx^i_{k_i}):=dx^i_{k_i}\setminus{j}$, 

\item $s^{n-p}_{n-p-1,k}(dx^i_{k_i}\setminus{(j_1,\cdots,j_p))}=
t^{n-p}_{n-p-1,k}(dx^i_{k_i}\setminus{(j_1,\cdots,j_p))}:=dx^i_{k_i}\setminus{(j_1,\cdots,j_p,k)}$
\end{itemize}
thus different contractions of $dx^i_{k_i}$ are the faces of its underlying trivial $n$-cubical set. 

However this structure of $n$-cube that $dx^i_{k_i}$ has is too trivial because it does not
distinguished sources and targets with the same direction $j$. And this distinction is crucial because our idea is too
label any $n$-cubical sets $A$ with a coordinate $dx^i_{k_i}$ of $\mathcal{Z}_{n}$, such that
faces of $A$ must have new coordinates $dx^i_{k_i}\setminus{(j_1,\cdots,j_p)}$ build by contractions and
weighted by a notion of sources and targets. In order to correct this default we are going to enriched
the coordinates with a notion of \textit{weighted coordinate} or \textit{link}, which are roughly
speaking \textit{coordinates equipped with or weighted with} the symbols $\{-,+\}$. 

Thus for each coordinate
$dx^i_{k_i}$ of the infinite network $\mathcal{Z}_{n}$  
we shall associate an other $n$-cubical set $\square^{dx^i_{k_i}}_{1(n)}$ called \textit{the box} of $dx^i_{k_i}$ 
and which formalise better the notion of $n$-cubical set
$A$ labelled by $dx^i_{k_i}$, in the sense that sources and targets of $A$ are then labelled with
weighted coordinates, which give the right information of the location of faces of $A$. Without
these weights any $p$-face of $A$ which is a source in the direction $j$ has the same coordinate
(because the trivial structure collapse this source-target information) 
as the other $p$-face of $A$ which is a target in the same direction $j$, and this is 
counterintuitive : the role of $\square^{dx^i_{k_i}}_{1(n)}$ is to distinguished well coordinates
of any faces of any $n$-cubical set labelled with the coordinate $dx^i_{k_i}$. The 
next section is devoted to the description of these boxes $\square^{dx^i_{k_i}}_{1(n)}$.   
 \end{itemize}

\subsection{The basic box $\square^{dx^i_{k_i}}_{1(n)}$ of a coordinate $dx^i_{k_i}$}
\label{basic-box}

Given a coordinate $dx^i_{k_i}$ and the elementary $n$-cube $1(n)$ (which is the unique $n$-cell of the 
cubical sketch $\mathbb{C}$), we associate to it a canonical \textit{free box} $(dx^i_{k_i})=\square^{dx^i_{k_i}}_{1(n)}$ 
which is an $n$-cubical set which faces are congruences of \textit{links}. This $n$-cubical set $\square^{dx^i_{k_i}}_{1(n)}$ 
is called \textit{the basic box of the coordinate} $dx^i_{k_i}$. Its sources and its targets are compatible with contractions and obtained by contraction of $dx^i_{k_i}$,
and its different degeneracies (classical and connections) are compatible with dilatations and obtained by dilatation of $dx^i_{k_i}$.
Its links
are seen as terms of a language equipped with the different
contractions of $dx^i_{k_i} : dx^i_{k_i}\setminus(j_1,j_2,\cdots,j_p)$ plus 
two symbols $\{-,+\}$ which label these contractions. These symbols 
$\{-,+\}$ must be interpreted as sources and targets of the different contractions 
they equipped, and provide a good notion of sources and targets for $\square^{dx^i_{k_i}}_{1(n)}$.
These terms are built inductively (see below) and congruences on it
use notions of zigzag build with the cubical identities of sources and targets (see below).  
An other possible description of faces of $\square^{dx^i_{k_i}}_{1(n)}$
is given in the remark below, which looks more natural (it uses the Reverse Polish Notation), but 
less intuitive for us. Perhaps in the future we would prefer these \textit{RPN notations}. 

In this section we will describe only the underlying cubical set of $\square^{dx^i_{k_i}}_{1(n)}$ and degeneracies
of it shall be described only in the next section, because they are more subtile and involve \textit{notions
of dilated free boxes equipped congruences for degeneracies} (see below). As we wrote in the 
previous section the role of $\square^{dx^i_{k_i}}_{1(n)}$ can be summarized as follow : if a $n$-cubical
set $X$ is labelled by a coordinate $dx^i_{k_i}$ it means that it is contained in the box $\square^{dx^i_{k_i}}_{1(n)}$
which faces are congruences of \textit{weighted coordinates or links}. 
The box $\square^{dx^i_{k_i}}_{1(n)}$ and all faces of $\square^{dx^i_{k_i}}_{1(n)}$ have underlying
free boxes (see below). But when we consider the free box associated to a face of $\square^{dx^i_{k_i}}_{1(n)}$
we forget that it was "linked" to $\square^{dx^i_{k_i}}_{1(n)}$.

In order to keep the \textit{linked information} of the faces of $\square^{dx^i_{k_i}}_{1(n)}$ we write these links
as finite sequences of the form :
 $$X=(dx^i_{k_i},(dx^i_{k_i}\setminus j_1,\pm), (dx^i_{k_i}\setminus(j_1,j_2),\pm),...,
(dx^i_{k_i}\setminus(j_1,j_2,\cdots,j_r),\pm))$$

We can define them by finite decreasing induction :
\begin{definition}
\begin{itemize}
\item For any direction $j\in\llbracket 1,n\rrbracket$, the term $s^{n}_{n-1,j}(\square^{dx^i_{k_i}}_{1(n)})=(dx^i_{k_i},(dx^i_{k_i}\setminus j,-))$ and
the term $t^{n}_{n-1,j}(\square^{dx^i_{k_i}}_{1(n)})=(dx^i_{k_i},(dx^i_{k_i}\setminus j,+))$
 are $1$-links which must be interpreted respectively as the $j$-source and the 
$j$-target of the box $\square^{dx^i_{k_i}}_{1(n)}$. 

\item If $X=(dx^i_{k_i},(dx^i_{k_i}\setminus j_1,\pm), (dx^i_{k_i}\setminus(j_1,j_2),\pm),...,
(dx^i_{k_i}\setminus(j_1,j_2,\cdots,j_{n-r}),\pm))$ is an $(n-r)$-link of the box $\square^{dx^i_{k_i}}_{1(n)}$, then for any direction 
$j\in\llbracket 1,r\rrbracket$, the terms

$$s^{r}_{r-1,j}(X)=(dx^i_{k_i},(dx^i_{k_i}\setminus j_1,\pm), (dx^i_{k_i}\setminus(j_1,j_2),\pm),...,
(dx^i_{k_i}\setminus(j_1,j_2,\cdots,j_{n-r}),\pm),(dx^i_{k_i}\setminus(j_1,j_2,\cdots,j_{n-r},j),-))$$

$$t^{r}_{r-1,j}(X)=(dx^i_{k_i},(dx^i_{k_i}\setminus j_1,\pm), (dx^i_{k_i}\setminus(j_1,j_2),\pm),...,
(dx^i_{k_i}\setminus(j_1,j_2,\cdots,j_{n-r}),\pm),(dx^i_{k_i}\setminus(j_1,j_2,\cdots,j_{n-r},j),+))$$
are $(n-r-1)$-links of $\square^{dx^i_{k_i}}_{1(n)}$.

\item $(n-r)$-links of sources-targets of $\square^{dx^i_{k_i}}_{1(n)}$, or $(n-r)$-links of $\square^{dx^i_{k_i}}_{1(n)}$ for short, are given by such sequences
$$(dx^i_{k_i},(dx^i_{k_i}\setminus j_1,\pm), (dx^i_{k_i}\setminus(j_1,j_2),\pm),...,
(dx^i_{k_i}\setminus(j_1,j_2,\cdots,j_{n-r}),\pm)).$$
\end{itemize}
\end{definition}

Some notations shall be useful :

$$s^{n}_{n_{2},j^{1}}:=s^{n_{2}+1}_{n_{2},j^{1}_{n_{2}+1}}
\circ s^{n_{2}+2}_{n_{2}+1,j^{1}_{n_{2}+2}}\cdots \circ s^{n-1}_{n-2,j^{1}_{n-1}}\circ s^{n}_{n-1,j^{1}_n}$$
where $j^{1}=(j^{1}_n,\cdots,j^{1}_{n_{2}+1})$ and 
$j^{1}_n\in\llbracket 1,n \rrbracket, j^{1}_{n-1}\in\llbracket 1,n-1\rrbracket,\cdots,
j^{1}_{n_{2}+1}\in\llbracket 1,n_{2}+1\rrbracket$

$$t^{n}_{n_{2},j^{1}}:=t^{n_{2}+1}_{n_{2},j^{1}_{n_{2}+1}}
\circ t^{n_{2}+2}_{n_{2}+1,j^{1}_{n_{2}+2}}\cdots \circ t^{n-1}_{n-2,j^{1}_{n-1}}\circ t^{n}_{n-1,j^{1}_n}$$
where $j^{1}=(j^{1}_n,\cdots,j^{1}_{n_{2}+1})$
 and $j^{1}_n\in\llbracket 1,n \rrbracket, j^{1}_{n-1}\in\llbracket 1,n-1\rrbracket,\cdots,
j^{1}_{n_{2}+1}\in\llbracket 1,n_{2}+1\rrbracket$.

Also for any partition $n_{p}<n_{p-1}<\cdots<n_{k}<\cdots<n_{2}<n_{1}=n$ with $(p-1)$
intervals $\llbracket n_{k+1},n_{k}\rrbracket$ we have $6$ different zigzags of sources 
and targets :
\begin{itemize}

\item $s^{n}_{n_{2},j}:=s^{n_{2}+1}_{n_{2},j_{n_{2}+1}}
\circ s^{n_{2}+2}_{n_{2}+1,j_{n_{2}+2}}\cdots \circ s^{n-1}_{n-2,j_{n-1}}\circ s^{n}_{n-1,j_n}$
where $j=(j_n,\cdots,j_{n_{2}+1})$ and 
$j_n\in\llbracket 1,n \rrbracket, j_{n-1}\in\llbracket 1,n-1\rrbracket,\cdots,
j_{n_{2}+1}\in\llbracket 1,n_{2}+1\rrbracket$ called string of sources of type $s$.

\item $t^{n}_{n_{2},j}:=t^{n_{2}+1}_{n_{2},j_{n_{2}+1}}
\circ t^{n_{2}+2}_{n_{2}+1,j_{n_{2}+2}}\cdots \circ t^{n-1}_{n-2,j_{n-1}}\circ t^{n}_{n-1,j_n}$
where $j=(j_n,\cdots,j_{n_{2}+1})$ and 
$j_n\in\llbracket 1,n \rrbracket, j_{n-1}\in\llbracket 1,n-1\rrbracket,\cdots,
j_{n_{2}+1}\in\llbracket 1,n_{2}+1\rrbracket$ called string of targets of type $t$

\item $s^{n_{p-1}}_{n_{p},j^{p-1}}\circ t^{n_{p-2}}_{n_{p-1},j^{p}}\cdots t^{n_{k}}_{n_{k+1},j^{k}}\circ s^{n_{k-1}}_{n_{k},j^{k-1}}\cdots 
t^{n_2}_{n_{3},j^{2}}\circ s^{n}_{n_{2},j^{1}}$ called zigzag of sources-targets of type $(s,s)$.

\item $s^{n_{p-1}}_{n_{p},j^{p-1}}\circ t^{n_{p-2}}_{n_{p-1},j^{p}}\cdots t^{n_{k}}_{n_{k+1},j^{k}}\circ s^{n_{k-1}}_{n_{k},j^{k-1}}\cdots 
s^{n_2}_{n_{3},j^{2}}\circ t^{n}_{n_{2},j^{1}}$ called zigzag of sources-targets of type $(s,t)$.

\item $t^{n_{p-1}}_{n_{p},j^{p-1}}\circ s^{n_{p-2}}_{n_{p-1},j^{p}}\cdots t^{n_{k}}_{n_{k+1},j^{k}}\circ s^{n_{k-1}}_{n_{k},j^{k-1}}\cdots 
s^{n_2}_{n_{3},j^{2}}\circ t^{n}_{n_{2},j^{1}}$ called zigzag of sources-targets of type $(t,t)$.

\item $t^{n_{p-1}}_{n_{p},j^{p-1}}\circ s^{n_{p-2}}_{n_{p-1},j^{p}}\cdots t^{n_{k}}_{n_{k+1},j^{k}}\circ s^{n_{k-1}}_{n_{k},j^{k-1}}\cdots 
t^{n_2}_{n_{3},j^{2}}\circ s^{n}_{n_{2},j^{1}}$ called zigzag of sources-targets of type $(t,s)$
\end{itemize}

The number of occurences of the $s$ and of the $t$ in a string or zigzag is called \textit{the
size} of the string or zigzag. If $X$ is an $r$-link of $\square^{dx^i_{k_i}}_{1(n)}$ :

$$X=(dx^i_{k_i},(dx^i_{k_i}\setminus j_1,\pm), (dx^i_{k_i}\setminus(j_1,j_2),\pm),...,
(dx^i_{k_i}\setminus(j_1,j_2,\cdots,j_r),\pm))$$

then it can be written

$$X=z_X(\square^{dx^i_{k_i}}_{1(n)})$$

where $z_X$ denotes its underlying string or zigzag of sources-targets.

All these zigzags or strings build the $(n-n_{p})$ faces of any $n$-cube. Thanks to the cubical identities two differents 
zigzags or strings can be equal. And these equalities build congruences on the sequences defined below, such that equivalence
relations of these sequences are the faces of the free box $\square^{dx^i_{k_i}}_{1(n)}$.

More precisely consider two $(n-r)$-links
$X=(dx^i_{k_i},(dx^i_{k_i}\setminus j_1,\pm), (dx^i_{k_i}\setminus(j_1,j_2),\pm),...,
(dx^i_{k_i}\setminus(j_1,j_2,\cdots,j_{n-r}),\pm))$ and 
$X'=(dx^i_{k_i},(dx^i_{k_i}\setminus j'_1,\pm), (dx^i_{k_i}\setminus(j'_1,j'_2),\pm),...,
(dx^i_{k_i}\setminus(j'_1,j'_2,\cdots,j'_{n-r}),\pm))$. Denote by $z_X$ the string or zigzag of sources-targets which gives $X$, i.e $X=z_X(\square^{dx^i_{k_i}}_{1(n)})$, and $z_{X'}$ the string or zigzag of sources-targets which gives $X'$, i.e 
$X'=z_{X'}(\square^{dx^i_{k_i}}_{1(n)})$. 

\begin{definition}
With the above notations, the $(n-r)$-link $X$ is congruent to the $(n-r)$-link $X'$ if 
and only if $z_X=z_{X'}$; in this case it is trivial to see that $z_X$ and $z_{X'}$ have the same size. Then we write $X\equiv X'$. An equivalence classe of $(n-r)$-link
of the free box $\square^{dx^i_{k_i}}_{1(n)}$ are $r$-faces of $\square^{dx^i_{k_i}}_{1(n)}$.
\end{definition}



In fact the \textit{terminal element} of the $(n-r)$-link $X$ :
$$(dx^i_{k_i}\setminus(j_1,j_2,\cdots,j_{n-r}),\pm)$$
  gives the precise information of an 
$r$-face of $\square^{dx^i_{k_i}}_{1(n)}$
that it can be a source or a target, depending on the sign in $\{-,+\}$ : $"-"$ means sources and $"+"$ means target. 

\begin{lemma}
If two $(n-r)$-links of $\square^{dx^i_{k_i}}_{1(n)}$ are congruents then they have the same terminal element.
\end{lemma}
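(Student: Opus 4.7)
The plan is to reduce the statement to elementary cubical-identity swaps between the zigzags $z_X$ and $z_{X'}$, and then verify invariance of the terminal element under each such swap. Since $z_X=z_{X'}$ means the two zigzags agree as composite source-target operators on $\square^{dx^i_{k_i}}_{1(n)}$, they are connected by a finite chain of elementary applications of the cubical identity $\partial^{\alpha}_{i}\partial^{\beta}_{j}=\partial^{\beta}_{j-1}\partial^{\alpha}_{i}$ for $i<j$ (which are precisely the relations underlying the strings and zigzags listed in the six types $s^n_{n_2,j}$, $t^n_{n_2,j}$, $(s,s)$, $(s,t)$, $(t,s)$, $(t,t)$ above). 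By induction on the length of such a witnessing chain, it suffices to treat a single elementary swap.

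For an interior swap — one that does not touch the outermost operator of $z_X$ — the outermost operator is untouched, so the sign $\pm$ of the terminal element is unchanged. For the coordinate part, the swap permutes two adjacent indices in $(j_1,\ldots,j_{n-r})$ together with the re-indexing dictated by the cubical identity; unwinding the definition $dx^i_{k_i}\setminus j:=dx^1_{k_1}\otimes\cdots\otimes dx^{j-1}_{k_{j-1}}\otimes dx^j_{k_{j+1}}\otimes\cdots\otimes dx^{n-1}_{k_n}$ and applying it twice shows directly that both orderings give the same formal coordinate $dx^i_{k_i}\setminus(j_1,\ldots,j_{n-r})$.

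When the swap does touch the outermost operator of $z_X$, the coordinate part is still handled by the same re-indexing calculation, since the identity $dx^i_{k_i}\setminus(\ldots,j,i)=dx^i_{k_i}\setminus(\ldots,i,j-1)$ (for $i<j$) again follows from the double application of the definition of $\setminus j$. For the sign, the key observation is that each of the two operators swapped keeps its own sign individually across the identity: the outer $\partial^\alpha_i$ and inner $\partial^\beta_j$ on the left are exchanged for an outer $\partial^\beta_{j-1}$ and inner $\partial^\alpha_i$ on the right, so the sign attached to the last contracted direction (as recorded by the terminal element) matches on both sides once the zigzag-type classification is respected. The careful sign bookkeeping in this outermost-swap case — tracking which of the two adjacent operators ends up outermost after the cubical swap and verifying that this is forced to share the sign with the previous outermost — is the main obstacle; the rest reduces to routine checks on the formula for $\setminus$.
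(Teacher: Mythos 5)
Your strategy differs from the paper's: the paper runs a finite decreasing induction on the links themselves (base case the $1$-links $s^{n}_{n-1,j}(\square^{dx^i_{k_i}}_{1(n)})$ and $t^{n}_{n-1,j}(\square^{dx^i_{k_i}}_{1(n)})$, then applying further sources and targets), whereas you induct on a chain of elementary cubical-identity rewrites joining $z_X$ to $z_{X'}$. That reduction is legitimate in principle, though it silently uses the fact that two words in the face maps which are equal as morphisms of the cube category are joined by adjacent swaps; you should state that as an input. Your treatment of the coordinate part is correct: the computation $dx^i_{k_i}\setminus(\ldots,j,i)=dx^i_{k_i}\setminus(\ldots,i,j-1)$ for $i<j$ does follow from the re-indexing in the definition of $\setminus j$.

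The genuine gap is the sign in the outermost-swap case, which you yourself flag as ``the main obstacle'' and then assert away. Under the identity $\partial^{\alpha}_{i}\circ\partial^{\beta}_{j}=\partial^{\beta}_{j-1}\circ\partial^{\alpha}_{i}$ (for $i<j$) applied at the outermost position, the operator that contributes the terminal sign changes from one of type $\alpha$ to one of type $\beta$, so the recorded sign flips whenever $\alpha\neq\beta$. Concretely, for $n=2$ the two $2$-links $s^{1}_{0,1}\bigl(t^{2}_{1,2}(\square^{dx^i_{k_i}}_{1(2)})\bigr)$ and $t^{1}_{0,1}\bigl(s^{2}_{1,1}(\square^{dx^i_{k_i}}_{1(2)})\bigr)$ have equal underlying zigzags (since $s_1\circ t_2=t_1\circ s_1$) but terminal signs $-$ and $+$ respectively, so the single-swap invariance your induction rests on fails for the terminal element as literally defined. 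What your observation ``each operator keeps its own sign'' actually establishes is that the assignment of a sign to each removed direction of the \emph{original} coordinate is invariant under the swap; what changes is which removed direction is \emph{last}, and the terminal element records only that last one. To repair the argument you must carry this richer invariant (the full list of pairs consisting of a removed direction and its sign, i.e. the face datum) through the induction rather than only the terminal element, or at minimum isolate the mixed $st$/$ts$ outermost swaps and handle them by an explicit argument; as written, the decisive case is not proved, and the claim you make for it is not correct.
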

\begin{proof}
The proof is easy and is made by finite decreasing induction :

\begin{itemize}
\item We start the induction by proving it with sources and targets of $(dx^i_{k_i})=\square^{dx^i_{k_i}}_{1(n)}$ (by using the whole cubical
identities $ss=ss$, $st=ts$, etc.) and verify that 
indeed they give the same terminal coordinates : this step shows the magical role of the trivial 
cubical structure of the coordinates. See the section above. 

\item We suppose that this is true for two congruent $(n-r)$-links. When we apply sources and targets of these 
$(n-r)$-links then it is straightforward to see that they have the same 
terminal coordinates. 
\end{itemize}
\end{proof} 

A face of $\square^{dx^i_{k_i}}_{1(n)}$ is thus an equivalent classe of links
of $\square^{dx^i_{k_i}}_{1(n)}$ with the same terminal element.

We can have in mind also that 
$(dx^i_{k_i}\setminus(j_1,j_2,\cdots,j_{n-r}),\pm)$ is an $r$-face of $\square^{dx^i_{k_i}}_{1(n)}$ 
equipped with 
(or linked by) the link $$(dx^i_{k_i},(dx^i_{k_i}\setminus j_1,\pm), (dx^i_{k_i}\setminus(j_1,j_2),\pm),...,
(dx^i_{k_i}\setminus(j_1,j_2,\cdots,j_{n-r}),\pm))$$

Thus when there is no confusion about the prescribed link of a face 
$(dx^i_{k_i}\setminus(j_1,j_2,\cdots,j_{n-r}),\pm)$ of $\square^{dx^i_{k_i}}_{1(n)}$ we  
denote this $r$-face of $\square^{dx^i_{k_i}}_{1(n)}$ just by $(dx^i_{k_i}\setminus(j_1,j_2,\cdots,j_{n-r}),\pm)$ without
referring its link in $\square^{dx^i_{k_i}}_{1(n)}$. 

The previous lemma allows to build the \textit{free boxes associate to any faces} of $\square^{dx^i_{k_i}}_{1(n)}$ :

\begin{definition}
The \textit{free box} $(dx^i_{k_i}\setminus(j_1,j_2,\cdots,j_{n-r}))=\square^{dx^i_{k_i}\setminus(j_1,j_2,\cdots,j_{n-r})}_{1(r)}$ of the link
$(dx^i_{k_i},(dx^i_{k_i}\setminus j_1,\pm), (dx^i_{k_i}\setminus (j_1,j_2),\pm),...,
(dx^i_{k_i}\setminus(j_1,j_2,\cdots,j_{n-r}),\pm))$ which represent an $r$-face of 
$\square^{dx^i_{k_i}}_{1(n)}$, is the basic box of the coordinate
$dx^i_{k_i}\setminus(j_1,j_2,\cdots,j_{n-r})$ in $\mathcal{Z}_{r}$. 
\end{definition}

When working with this free box 
$\square^{dx^i_{k_i}\setminus(j_1,j_2,\cdots,j_{{n-r}})}_{1(r)}$, we forget the previous 
information that it was linked to $\square^{dx^i_{k_i}}_{1(n)}$.
Thus the link $(dx^i_{k_i},(dx^i_{k_i}\setminus j_1,\pm), (dx^i_{k_i}\setminus(j_1,j_2),\pm),...,
(dx^i_{k_i}\setminus(j_1,j_2,\cdots,j_{n-r-1}),\pm))$ which represents a face of $\square^{dx^i_{k_i}}_{1(n)}$, 
 represents also a face of the underlying free box $\square^{dx^i_{k_i}\setminus(j_1,j_2,\cdots,j_{n-r})}_{1(r)}$, but with the simpler link
$(dx^i_{k_i}\setminus(j_1,j_2,\cdots,j_{n-r}),(dx^i_{k_i}\setminus(j_1,j_2,\cdots,j_{n-r-1}),\pm))$
when we see it as a face of the free box $\square^{dx^i_{k_i}\setminus(j_1,j_2,\cdots,j_{n-r})}_{1(r)}$.

\begin{remark}
We have others natural notations for links $X$ of $\square^{dx^i_{k_i}}_{1(n)}$ (Reverse Polish Notation, RPN) :

$$X=(dx^i_{k_i},dx^i_{k_i}\setminus j_1,dx^i_{k_i}\setminus(j_1,j_2),...,
dx^i_{k_i}\setminus(j_1,j_2,\cdots,j_{n-r}),\pm,\cdots,\pm)$$

This presentation allow the following definition of sources and targets of links
of $\square^{dx^i_{k_i}}_{1(n)}$ by using underlying free boxes of it :

\begin{itemize}
\item $s^{n}_{n-1,j}(\square^{dx^i_{k_i}}_{1(n)})=(dx^i_{k_i},(dx^i_{k_i}\setminus j,-))$

\item $s^{r}_{r-1,l}(X)=(dx^i_{k_i},dx^i_{k_i}\setminus j_1,dx^i_{k_i}\setminus(j_1,j_2),...,
dx^i_{k_i}\setminus(j_1,j_2,\cdots,j_{n-r-1}),s^{r}_{r-1,l}(\square^{dx^i_{k_i}
\setminus(j_1,j_2,\cdots,j_{n-r})}_{1(r)}),\pm,\cdots,\pm)$
\end{itemize}

Thus 
$$s^{r}_{r-1,l}(X)=(dx^i_{k_i},dx^i_{k_i}\setminus j_1,dx^i_{k_i}\setminus(j_1,j_2),...,
(dx^i_{k_i}\setminus(j_1,j_2,\cdots,j_{n-r}),dx^i_{k_i}\setminus(j_1,j_2,\cdots,j_{n-r},j_{n-(r-1)}=l),-),\pm,\cdots,\pm)$$
that we write when removing redondant occurrences of brackets :

$$s^{r}_{r-1,j}(X)=(dx^i_{k_i},dx^i_{k_i}\setminus j_1,dx^i_{k_i}\setminus(j_1,j_2),...,
dx^i_{k_i}\setminus(j_1,j_2,\cdots,j_{n-r}),dx^i_{k_i}\setminus(j_1,j_2,\cdots,j_{n-r},j_{n-(r-1)}=l),-,\pm,\cdots,\pm)$$

and for targets :

\begin{itemize}
\item $t^{n}_{n-1,j}(\square^{dx^i_{k_i}}_{1(n)})=(dx^i_{k_i},(dx^i_{k_i}\setminus j,+))$

\item $t^{r}_{r-1,l}(X)=(dx^i_{k_i},dx^i_{k_i}\setminus j_1,dx^i_{k_i}\setminus(j_1,j_2),...,
dx^i_{k_i}\setminus(j_1,j_2,\cdots,j_{n-r-1}),t^{r}_{r-1,l}(\square^{dx^i_{k_i}
\setminus(j_1,j_2,\cdots,j_{n-r})}_{1(r)}),\pm,\cdots,\pm)$
\end{itemize}
Thus 
$$t^{r}_{r-1,l}(X)=(dx^i_{k_i},dx^i_{k_i}\setminus j_1,dx^i_{k_i}\setminus(j_1,j_2),...,
(dx^i_{k_i}\setminus(j_1,j_2,\cdots,j_{n-r}),dx^i_{k_i}\setminus(j_1,j_2,\cdots,j_{n-r},j_{n-(r-1)}=l),+),\pm,\cdots,\pm)$$
that we write when removing redondant occurrences of brackets :
$$t^{r}_{r-1,j}(X)=(dx^i_{k_i},dx^i_{k_i}\setminus j_1,dx^i_{k_i}\setminus(j_1,j_2),...,
dx^i_{k_i}\setminus(j_1,j_2,\cdots,j_{n-r}),dx^i_{k_i}\setminus(j_1,j_2,\cdots,j_{n-r},j_{n-(r-1)}=l),+,\pm,\cdots,\pm)$$
\end{remark}

%
%
%
%
%
%
%
%
%
%
%
%

\subsection{Degenerate boxes $(\square^{dx^i_{k_i}}_{1(n)},\equiv_A)$}
\label{degenerate-boxes}

We know that the following forgetful functor : 

$$\begin{tikzcd}
\left[\mathbb{C}_r^{op},\mathbb{S}\text{ets}\right]\arrow[rr,"U"]&&\left[\mathbb{C}^{op},\mathbb{S}\text{ets}\right]=\CS
\end{tikzcd}$$

which sends cubical sets equipped with degeneracies and connections \cite{cam-cubique} to
cubical sets is right adjoint. Its induced monad $\mathbb{R}$ applied to the terminal object 
$1$ of the category $\left[\mathbb{C}^{op},\mathbb{S}\text{ets}\right]$ of cubical sets, gives all kind
of degenerates $n$-cells $A\in\mathbb{R}(1)(n)$ (for all integers $n\in\mathbb{N}$) we need for cubical pasting diagrams. In the
next section we shall describe this monad accurately 
in order to see that it is a cartesian monad.

Now we are going to define the notion of zigzag of degeneracies in order to capture \textit{the depth of a degenerate $n$-cell $A$
} in $\mathbb{R}(1)(n)$ which is the greatest integer $r$ such that $r$-faces of $A$ are of the form $1(r)$. We begin with the
notations :

 $1^{n_{2}}_{n,i^{1}}:=1^{n-1}_{n,i^{1}_{1}}\circ 1^{n-2}_{n-1,i^{1}_{2}}\circ\cdots\circ 1^{n-k}_{n-k+1,i^{1}_{k}}\circ\cdots 1^{n_{2}}_{n_{2}+1,i^{1}_{n-n_{2}}}$ where $i^{1}=(i^{1}_{1},\cdots,i^{1}_{k},\cdots,i^{1}_{n-n_{2}})$, $k\in\llbracket 1,n-n_{2}\rrbracket$
 and 
 $$i^{1}_{1}\in\llbracket 1,n\rrbracket,\cdots i^{1}_{k}\in\llbracket 1,n-k+1\rrbracket\cdots i^{1}_{n-n_{2}}\in\llbracket 1,n_{2}+1\rrbracket.$$

$1^{n_{2},\gamma}_{n,j^{1}}:=1^{n-1,\gamma}_{n,j^{1}_{1}}\circ 1^{n-2,\gamma}_{n-1,j^{1}_{2}}\circ\cdots\circ 1^{n-k,\gamma}_{n-k+1,j^{1}_{k}}\circ\cdots 1^{n_{2},\gamma}_{n_{2}+1,j^{1}_{n-n_{2}}}$ where $j^{1}=(j^{1}_{1},\cdots,j^{1}_{k},\cdots,j^{1}_{n-n_{2}})$, $k\in\llbracket 1,n-n_{2}\rrbracket$
 and 
 $$j^{1}_{1}\in\llbracket 1,n-1\rrbracket,\cdots j^{1}_{k}\in\llbracket 1,n-k\rrbracket\cdots j^{1}_{n-n_{2}}\in\llbracket 1,n_{2}\rrbracket.$$

Also for any partition $n_{p}<n_{p-1}<\cdots<n_{k}<\cdots<n_{2}<n_{1}=n$ with $(p-1)$
intervals $\llbracket n_{k+1},n_{k}\rrbracket$ we have $6$ different zigzags of reflexivities and connections :
\begin{itemize}

\item $1^{n_{2}}_{n,i}:=1^{n-1}_{n,i_{1}}\circ 1^{n-2}_{n-1,i_{2}}\circ\cdots\circ 1^{n-k}_{n-k+1,i_{k}}\circ\cdots 1^{n_{2}}_{n_{2}+1,i_{n-n_{2}}}$ where $i=(i_{1},\cdots,i_{k},\cdots,i_{n-n_{2}})$, $k\in\llbracket 1,n-n_{2}\rrbracket$ called strings of degeneracies of type $1$.

\item $1^{n_{2},\gamma}_{n,j}:=1^{n-1,\gamma}_{n,j_{1}}\circ 1^{n-2,\gamma}_{n-1,j_{2}}\circ\cdots\circ 1^{n-k,\gamma}_{n-k+1,j_{k}}\circ\cdots 1^{n_{2},\gamma}_{n_{2}+1,j_{n-n_{2}}}$ where $j=(j_{1},\cdots,j_{k},\cdots,j_{n-n_{2}})$, $k\in\llbracket 1,n-n_{2}\rrbracket$ called strings of degeneracies of type $\gamma$.

\item $1^{n_2}_{n,i^{1}}\circ 1^{n_3,\gamma}_{n_2,i^{2}}\circ\cdots\circ 1^{n_{k},\gamma}_{n_{k-1},i^{k}}
\circ 1^{n_{k+1}}_{n_{k},i^{k+1}}\circ\cdots\circ 1^{n_{p-1,\gamma}}_{n_{p-2},i^{p-2}}\circ 1^{n_{p}}_{n_{p-1},i^{p-1}}$
called zigzags of degeneracies of type $(1,1)$.

\item $1^{n_2,\gamma}_{n,i^{1}}\circ 1^{n_3}_{n_2,i^{2}}\circ\cdots\circ 1^{n_{k},\gamma}_{n_{k-1},i^{k}}
\circ 1^{n_{k+1}}_{n_{k},i^{k+1}}\circ\cdots\circ 1^{n_{p-1,\gamma}}_{n_{p-2},i^{p-2}}\circ 1^{n_{p}}_{n_{p-1},i^{p-1}}$
called zigzags of degeneracies of type $(\gamma,1)$.

\item $1^{n_2,\gamma}_{n,i^{1}}\circ 1^{n_3}_{n_2,i^{2}}\circ\cdots\circ 1^{n_{k},\gamma}_{n_{k-1},i^{k}}
\circ 1^{n_{k+1}}_{n_{k},i^{k+1}}\circ\cdots\circ 1^{n_{p-1}}_{n_{p-2},i^{p-2}}\circ 1^{n_{p},\gamma}_{n_{p-1},i^{p-1}}$
called zigzags of degeneracies of type $(\gamma,\gamma)$.

\item $1^{n_2}_{n,i^{1}}\circ 1^{n_3,\gamma}_{n_2,i^{2}}\circ\cdots\circ 1^{n_{k},\gamma}_{n_{k-1},i^{k}}
\circ 1^{n_{k+1}}_{n_{k},i^{k+1}}\circ\cdots\circ 1^{n_{p-1}}_{n_{p-2},i^{p-2}}\circ 1^{n_{p},\gamma}_{n_{p-1},i^{p-1}}$
called zigzags of degeneracies of type $(1,\gamma)$.

\end{itemize}
The number of occurrences of the operations $1^{r}_{r+1,i}$, $1^{r,\gamma}_{r+1,i}$ in such 
zigzags or such strings are respectively called \textit{the size of a zigzag} or \textit{the size of a string}. 

\begin{definition}
\label{depth}
Consider an $n$-cell $A\in\mathbb{R}(1)(n)$ which is not equal to $1(n)$. Thus it is 
a degenerate $n$-cell and is build with zigzag or string of degeneracies as
described just above. The \textit{depth} of $A$ is the integer $p\in\mathbb{N}$
such that $A$ is equal to a zigzag of size $n-p$ or a string of size $n-p$ of degeneracies of the $p$-cell $1(p)$ of the cubical site, i.e $A$ is written $a_A(1(p))$ where $a_A$ denotes its underlying string or zigzag of degeneracies and $a_A$ has size $n-p$.
\end{definition}

\begin{remark}
Thanks to the axioms of degeneracies the degenerate $n$-cell $A$ has zigzags or strings
of degeneracies with different shapes and which are equals.
\end{remark} 

Suppose $A$ is a degenerate $n$-cell in $\mathbb{R}(1)(n)$ with
depth $p<n$. Zigzags or strings of sources-targets of $A$ with sizes which are less or equal to $(n-p)$ are the
one which build a congruence $\equiv_A$ on faces of the basic $n$-box 
$\square^{dx^i_{k_i}}_{1(n)}$, and this congruence is defined as follow : if $p<q\leq n$, two $q$-faces
$x$ and $y$ of $\square^{dx^i_{k_i}}_{1(n)}$ are $A$-congruent : $x\equiv_A y$ if and only if 
any strings or zigzags of sources-targets $z_x$  of $x$ (i.e $z_x$ is the underlying string or the underlying zigzag of sources-targets of any link of $\square^{dx^i_{k_i}}_{1(n)}$ 
which gives the $q$-face $x$ (any two such links are equivalent)) and any strings or zigzags of sources-targets $z_y$
(i.e $z_y$ is the underlying string or the underlying zigzag of sources-targets of any link of $\square^{dx^i_{k_i}}_{1(n)}$ 
which gives the $q$-face $y$ (any two such links are equivalent)) of $y$, equalize $A$ i.e are such that $z_x(A)=z_y(A)$. 

The quotient $\square^{dx^i_{k_i}}_{1(n)}/\equiv_A$ is a boxe with coordinate $dx^i_{k_i}$ such 
that it sources and targets agree with those of $A$. We denote it with the bracket notation
$(\square^{dx^i_{k_i}}_{1(n)},\equiv_A)$. 

\begin{definition}
\begin{itemize}
\item Sources and targets of degenerate boxes :
$$s^{n}_{n-1,j}((\square^{dx^i_{k_i}}_{1(n)},\equiv_A)):=(\square^{dx^i_{k_i}\setminus j}_{1(n-1)},\equiv_{s^{n}_{n-1,j}(A)})$$
and
$$t^{n}_{n-1,j}((\square^{dx^i_{k_i}}_{1(n)},\equiv_A)):=(\square^{dx^i_{k_i}\setminus j}_{1(n-1)},\equiv_{t^{n}_{n-1,j}(A)})$$

\item Degeneracies of degenerate boxes :
$$1^{n}_{n+1,j}((\square^{dx^i_{k_i}}_{1(n)},\equiv_A)):=(\square^{dx^i_{k_i}+j}_{1(n+1)},\equiv_{1^{n}_{n+1,j}(A)})$$
 and 
$$1^{n,\gamma}_{n+1,j}((\square^{dx^i_{k_i}}_{1(n)},\equiv_A)):=(\square^{dx^i_{k_i}+j}_{1(n+1)},\equiv_{1^{n,\gamma}_{n+1,j}(A)})$$ 
\end{itemize}
\end{definition}

\begin{definition}
A \textit{basic divisor} is the expression $A dx^i_{k_i}$ which mean that the $n$-cell 
$A\in\mathbb{R}(1)(n)$ has coordinate $dx^i_{k_i}$ and when we write $A dx^i_{k_i}$;
we furthermore mean that $A$ is located in its degenerate box 
$(\square^{dx^i_{k_i}}_{1(n)},\equiv_A)$.
\end{definition}

We use the following notations for sources and targets of basic divisors :
\begin{itemize}
\item $s^{n}_{n-1,j}(Adx^i_{k_i}):=s^{n}_{n-1,j}(A)dx^i_{k_i}\setminus j$

\item $t^{n}_{n-1,j}(Adx^i_{k_i}):=t^{n}_{n-1,j}(A)dx^i_{k_i}\setminus j$
\end{itemize}

With it we get two formal inclusions :

$$\begin{tikzcd}
s^{n}_{n-1,j}(Adx^{i}_{k_i})\arrow[rr,"s^{n}_{n-1,j}"]&&Adx^{i}_{k_i}
\end{tikzcd}\qquad
\begin{tikzcd}
t^{n}_{n-1,j}(Adx^{i}_{k_i})\arrow[rr,"t^{n}_{n-1,j}"]&&Adx^{i}_{k_i}
\end{tikzcd},$$

We use the following notations for degeneracies of basic divisors :

\begin{itemize}
\item $1^{n}_{n+1,j}(Adx^i_{k_i}):=1^{n}_{n+1,j}(A)(dx^i_{k_i}+j)$
\item $1^{n,\gamma}_{n+1,j}(Adx^i_{k_i}):=1^{n,\gamma}_{n+1,j}(A)(dx^i_{k_i}+j)$
\end{itemize}

Two basic divisors $Adx^{i}_{k_i}$, $A'dx^{i}_{k'_i}$ in $X$ located
respectively in the coordinates $dx^{i}_{k_i}=dx^{1}_{k_1}\otimes\cdots\otimes dx^{n}_{k_n}$ and
$dx^{i}_{k'_i}=dx^{1}_{k'_1}\otimes\cdots\otimes dx^{n}_{k'_n}$ are $j$-adjacent for a direction 
$j\in\llbracket 1,n\rrbracket$ if their coordinates are $j$-adjacent and if 
$s^{n}_{n-1,j}(Adx^{i}_{k_i})=t^{n}_{n-1,j}(A'dx^{i}_{k'_i})$ if $k_j=k'_j+1$ or 
$t^{n}_{n-1,j}(Adx^{i}_{k_i})=s^{n}_{n-1,j}(A'dx^{i}_{k'_i})$ if $k_j=k'_j-1$. An
\textit{isolated basic divisor} in $X$ is a basic divisor $Adx^{i}_{k_i}$ which is not
$j$-adjacent to any other basic divisor of $X$ for all direction $j\in\llbracket 1,n\rrbracket$.

Also we have the following simple fact :
\begin{proposition} 
Any basic divisor has an underlying structure of cubical set with connections.
\end{proposition}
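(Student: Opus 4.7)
The plan is to exhibit explicitly the sets of $r$-cells, the six families of structure maps, and then verify the cubical-with-connections identities by decomposing each identity into a ``coordinate part'' and a ``content part'' and applying the trivial cubical structure on $\mathcal{Z}$ for the former and the cubical-with-connections structure of the free algebra $\mathbb{R}(1)$ for the latter.

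More precisely, first I set up the underlying graded set. For $0\le r\le n$, the $r$-cells of the cubical set attached to $Adx^i_{k_i}$ are the $r$-faces of the degenerate box $(\square^{dx^{i}_{k_i}}_{1(n)},\equiv_A)$, i.e.\ $\equiv_A$-equivalence classes of $(n-r)$-links whose terminal entry $(dx^i_{k_i}\setminus(j_1,\dots,j_{n-r}),\gamma)$ carries a content $z_X(A)\in\mathbb{R}(1)(r)$; written as a basic divisor this is $z_X(A)\,dx^i_{k_i}\setminus(j_1,\dots,j_{n-r})$. For $r>n$, the $r$-cells are obtained formally by iterated dilatations $1^{n}_{n+1,j}$ and $1^{n,\gamma}_{n+1,j}$, with contents $1^{n}_{n+1,j}(A)$, $1^{n,\gamma}_{n+1,j}(A)$ and coordinates $dx^i_{k_i}+j$, exactly as prescribed in section~\ref{degenerate-boxes}.

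Next I check well-definedness of the six structure maps on these classes. The formulas
\[
s^{n}_{n-1,j}(Adx^{i}_{k_i})=s^{n}_{n-1,j}(A)\,dx^{i}_{k_i}\setminus j,\qquad
t^{n}_{n-1,j}(Adx^{i}_{k_i})=t^{n}_{n-1,j}(A)\,dx^{i}_{k_i}\setminus j
\]
and their analogues for $1^{n}_{n+1,j}$, $1^{n,\gamma}_{n+1,j}$ are compatible with $\equiv_A$ by construction of that congruence: two links $X,X'$ are $\equiv_A$-equivalent iff every source/target zigzag $z_X,z_{X'}$ underlying them equalizes $A$, and the structure maps simply extend these zigzags by one letter; so $X\equiv_A X'$ implies $z(X)\equiv_{z(A)} z(X')$ for $z\in\{s^{n}_{n-1,j},t^{n}_{n-1,j}\}$, and the coordinate components agree because contractions and dilatations on $\mathcal{Z}_n$ are themselves well-defined.

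Finally I verify the cubical-with-connections axioms. Each axiom is an equation of the form $\phi(Adx^i_{k_i})=\psi(Adx^i_{k_i})$ where $\phi,\psi$ are compositions of sources, targets, degeneracies and connections. Applying the formulas above, such an equation splits as the conjunction of:
\begin{itemize}
\item a content equation $\phi_A(A)=\psi_A(A)$ in $\mathbb{R}(1)$, which holds because $\mathbb{R}(1)$ is a cubical set with connections (the free one on the point);
\item a coordinate equation $\phi_{\text{co}}(dx^i_{k_i})=\psi_{\text{co}}(dx^i_{k_i})$ in $\mathcal{Z}$, which holds by the trivial cubical identities for contractions $\setminus j$ and dilatations $+j$ noted in the ``crucial and straightforward fact'' of the preceding subsection (for which both $s$ and $t$ act as $\setminus j$, and both $1$ and $1^\gamma$ act as $+j$).
\end{itemize}
Since $\equiv_A$ is insensitive to which particular zigzag is used to realize a given face, the resulting equation descends to the quotient.

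The main obstacle is the compatibility check with \emph{connections}, because a connection $1^{n,\gamma}_{n+1,j}$ mixes two adjacent directions and the coordinate component $dx^i_{k_i}+j$ is simpler than its action on $A$. One must verify that the identities relating connections to ordinary degeneracies and to faces (e.g.\ $s^{n+1}_{n,j}\,1^{n,\gamma}_{n+1,j}$ being a degeneracy or identity, and the interaction of two consecutive connections) are reflected in the coordinate arithmetic only at the ``trivial'' level and carry all their content in the $A$-component; once one has isolated this separation, the rest is a bookkeeping exercise using the axioms satisfied by $\mathbb{R}(1)$ as a cubical set with connections.
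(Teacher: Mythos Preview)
The paper gives no proof of this proposition; it is stated as a ``simple fact'' immediately after the definitions of sources, targets, and degeneracies of basic divisors and degenerate boxes, and is left to the reader. Your proposal therefore supplies the verification the paper omits.

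Your strategy is the natural one and is correct: every structure map on a basic divisor $Adx^{i}_{k_i}$ acts componentwise, sending the content $A$ to the corresponding face or degeneracy in $\mathbb{R}(1)$ and the coordinate $dx^{i}_{k_i}$ to the corresponding contraction or dilatation in $\mathcal{Z}$. Each cubical-with-connections axiom thus factors into a content equation, which holds because $\mathbb{R}(1)$ is by construction a cubical set with connections, and a coordinate equation, which holds by the trivial cubical structure on coordinates noted at the end of the tensorial-notation section. This is exactly the mechanism the paper has set up through the definitions in \S\ref{degenerate-boxes}, and your proof makes it explicit.

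One small remark: your discussion of well-definedness modulo $\equiv_A$ is more elaborate than needed. The structure maps on basic divisors are \emph{defined} directly by the formulas $s^{n}_{n-1,j}(Adx^{i}_{k_i})=s^{n}_{n-1,j}(A)\,dx^{i}_{k_i}\setminus j$ etc., not via representatives of link-classes, so there is no quotient to descend through at that level; the congruence $\equiv_A$ already lives inside the definition of the degenerate box and the structure maps on boxes are given in terms of $\equiv_{s^{n}_{n-1,j}(A)}$, $\equiv_{1^{n}_{n+1,j}(A)}$, and so on. Once you accept those definitions, the axiom check is exactly the two-component verification you describe.
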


The set of basic divisors is denoted $\mathbb{B}\text{Div}$ and by the previous proposition it 
is straightforward that it has an underlying structure of cubical set with connections where its 
$n$-cells are the basic $n$-divisors. Consider the full subcategory 
$\Theta_{\mathbb{B}\text{Div}}\subset\CS$ which objects are
basic divisors. The Yoneda embedding 
$$\begin{tikzcd}
\Theta_{\mathbb{B}\text{Div}}\arrow[rr,"\text{Y}"]&&\CS\\
X\arrow[rr,mapsto]&&\text{hom}_{\CS}(X,-)
\end{tikzcd}$$
shall be useful in the next section. However a little comment 
is necessary here.

\subsection{The monad of reflexive cubical sets}
As we wrote in the previous section the forgetful functor : 

$$\begin{tikzcd}
\left[\mathbb{C}_r^{op},\mathbb{S}\text{ets}\right]\arrow[rr,"U"]&&\left[\mathbb{C}^{op},\mathbb{S}\text{ets}\right]
\end{tikzcd}$$

which sends cubical sets equipped with degeneracies and connections \cite{cam-cubique} to
cubical sets is right adjoint and its induced monad is written $\mathbb{R}=(R,i,m)$ where 
\begin{tikzcd}
1_{\CS}\arrow[rr,"i"]&&R
\end{tikzcd}
 is its unit and 
\begin{tikzcd}
R^2\arrow[rr,"m"]&&R
\end{tikzcd}
is its multiplication. 

If $C\in\CS$ is a cubical set, then we put :

$$R(C):=\underset{X\in\mathbb{B}\text{Div}}\bigcup{\text{hom}_{\CS}(\text{Y}(X),C)}$$

The multiplication $m$ of the monad $\mathbb{R}$ is very simple : it is obtained with the concatenation of two strings of degeneracies, or one string of degeneracies with one zigzag of degeneracies, or with two zigzags of degeneracies. The unit $i$ of the monad $\mathbb{R}$ sends $n$-cells $c$ to the decorated box $cdx^{i}_{k_{i}}$.

Let us be more precise :
the multiplication
\begin{tikzcd}
R^2(C)\arrow[rr,"m"]&&R(C)
\end{tikzcd}
is defined as follow : the cubical set $R^2(C)$ is defined by the formula :

$$R^2(C)=\underset{X\in\mathbb{B}\text{Div}}
\bigcup{\text{hom}_{\CS}\Big(Y(X),R(C)=\underset{X'\in\mathbb{B}\text{Div}}
\bigcup{\text{hom}_{\CS}(Y(X'),C)}\Big)}$$

thus an $n$-cell $x$ of $R^2(C)$ is an expression of the form :
$z(z'(c))$ where $c$ is a $p$-cell of $C$, $p\leq n$ (for the case $p=n$ it means that $x$ is non-degenerate and equal to $c$), $z'$ is a string or a zigzag of degeneracies which when apply to $c$ gives a degenerate 
$q$-cell $z'(c)$ of $R(C)$ ($p<q\leq n$), and where $z$ is a string or a zigzag of degeneracies which degenerates again $z'(c)$. The multiplication $m$ sends $z(z'(c))\in R^2(C)$ to $(z+z')(c)\in R(C)$ where
here $z+z'$ is just the concatenation of $z$ and $z'$.

\begin{proposition}
The monad $\mathbb{R}=(R,i,m)$ of cubical reflexive sets with connections is cartesian
\end{proposition}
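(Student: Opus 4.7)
The plan is to exploit the explicit presentation
$$R(C) = \bigcup_{X\in\mathbb{B}\text{Div}} \hom_{\CS}(\text{Y}(X), C)$$
as a coproduct of representable functors, indexed by the set $\mathbb{B}\text{Div}$ of basic divisors. Since each representable $\hom_{\CS}(\text{Y}(X),-)$ preserves all limits, and coproducts in $\E$ commute with pullbacks (by extensivity of $\E$), the functor $R$ preserves pullbacks componentwise, and hence in $\CS$ (limits being computed pointwise). This establishes the first of the three ingredients of cartesianness.

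Next I would verify that the naturality squares of the unit $i$ are pullbacks. Parametrize an $n$-cell of $R(C)$ by a pair $(A,c)$ where $A\in\mathbb{R}(1)(n)$ is the decoration of the basic divisor and $c\in C_p$ is a base cell of depth $p\le n$ prescribed by $A$. Then $i_C(c)=(1(n),c)$, i.e.\ the pair where $A=1(n)$ is of maximal depth. Given $f:C\to D$ and a pullback element $((A,c'),d)\in R(C)\times_{R(D)} D$, the equality $(A,f(c'))=i_D(d)=(1(n),d)$ in the coproduct $R(D)$ forces $A=1(n)$ and $f(c')=d$, so the pullback reduces to $\{c'\in C_n : f(c')=d\}$, canonically identified with $C_n$ over $D_n$, whence the unit square is a pullback.

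For the multiplication $\mu$, an $n$-cell of $R^2(C)$ is an expression $z(z'(c))$ with $c\in C_p$, $z'$ a zigzag or string of degeneracies producing a $q$-cell, and $z$ a zigzag or string of degeneracies producing an $n$-cell; the multiplication $\mu_C$ sends this to $(z\circ z')(c)\in R(C)$, where $z\circ z'$ is the concatenation. A pullback element along $R(f)$ and $\mu_D$ is a compatible pair $(z''(c),z(z'(d)))$ satisfying $z''(f(c))=(z\circ z')(d)$ in $R(D)$; by disjointness of the coproduct, this forces the equality $z''=z\circ z'$ in $\mathbb{R}(1)(n)$ together with $f(c)=d$. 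The unique lift to $R^2(C)$ is then $z(z'(c))$, obtained by keeping the factorization $(z,z')$ from the $R^2(D)$-component and replacing $d$ by $c$ from the $R(C)$-component.

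The main technical obstacle is not the categorical manipulation but the book-keeping around equivalent representations: distinct zigzags or strings of degeneracies may represent the same cell of $\mathbb{R}(1)$, and one must consistently work modulo these equivalences for the identifications above to be canonical. The depth invariant introduced in \ref{depth} and the congruences $\equiv_A$ of \ref{degenerate-boxes} provide the required normalization, ensuring that the factorization of a cell of $\mathbb{R}(1)(n)$ at a fixed intermediate dimension $q$ is intrinsic, independent of symbolic presentation. Once this is in place the pullback diagrams check out as described and cartesianness of $\mathbb{R}$ follows.
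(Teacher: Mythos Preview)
Your proposal is correct and follows essentially the same strategy as the paper: both exploit the explicit coproduct-of-representables description of $R$ to verify preservation of pullbacks and then check the naturality squares of $i$ and $m$ by direct inspection of how the shape data and the base cell decouple. The only minor difference is that the paper reduces to checking the naturality squares over the terminal object $1$ (which suffices once $R$ preserves pullbacks), whereas you verify the general square for an arbitrary $f:C\to D$; your closing remark about zigzag normalization is a fair caveat but, as you observe, is already handled by working with elements of $\mathbb{R}(1)$ rather than their syntactic presentations.
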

\label{monad-R}

\begin{proof}

The definition of the endofunctor $R$ shows that it preserves fiber products.

We are going to prove that the multiplication $m$ is cartesian, i.e we are going to prove
that if $C\in\CS$ is a cubical set then the commutative diagram :

$$\begin{tikzcd}
R^{2}(C)\arrow[dd,"m(C)"{left}]\arrow[rr,"R^{2}(!)"]&&R^{2}(1)\arrow[dd,"m(1)"]\\\\
R(C)\arrow[rr,"R(!)"{below}]&&R(1)
\end{tikzcd}$$

is a cartesian square. Consider the commutative diagram in $\CS$ :
$$\begin{tikzcd}
C'\arrow[dd,"f"{left}]\arrow[rr,"g"]&&R^{2}(1)\arrow[dd,"m(1)"]\\\\
R(C)\arrow[rr,"R(!)"{below}]&&R(1)
\end{tikzcd}$$

Thus if $x$ is an $n$-cell of $C'$ then $f(x)=z(c)$ where 
$c\in C(q)$ ($q\leq n$) and $R(!)(f(x))=R(!)(z(c))=z(1(q))$, and
$g(x)=z"(z'(1(p)))$, thus $m(1)(g(x))=m(1)(z"(z'(1(p))))=(z"+z')(1(p))$,
thus the commutativity of the square gives $z=z"+z'$ and $p=q$

$$\begin{tikzcd}
C'\arrow[rrdd,"l",near end, dotted]\arrow[rrdddd,"f"{left}]\arrow[rrrrdd,"g"]\\\\
&&R^{2}(C)\arrow[dd,"m(C)"{left},near start]\arrow[rr,"R^{2}(!)"]&&R^{2}(1)\arrow[dd,"m(1)"]\\\\
&&R(C)\arrow[rr,"R(!)"{below}]&&R(1)
\end{tikzcd}$$

Thus the unique arrow $l$ is defined as follow : $l(x)=z"(z'(c))$, and
we can see that $m(C)(z"(z'(c)))=(z"+z')(c)=z(c)=f(x)$ and 
that $R^2(!)(z"(z'(c)))=z"(z'(1(p)))=g(x)$.

The cartesianity of the unit
$$\begin{tikzcd}
C\arrow[rr,"i"]&&R(C)
\end{tikzcd}$$
is easier and goes as follow : we start with a commutative diagram in $\CS$
$$\begin{tikzcd}
C'\arrow[dd,"f"{left}]\arrow[rr,"!"]&&1\arrow[dd,"i(1)"]\\\\
R(C)\arrow[rr,"R(!)"{below}]&&R(1)
\end{tikzcd}$$
Let $x$ be an $n$-cell of $C'$, thus we have $f(x)=z(c)$, thus 
$R(!)(z(c))=z(1(p))$ and the commutativity gives: 
$z(1(p))=i(1)(1(n))=1(n)$; which shows that
$z=\emptyset$ and $p=n$, thus $f(x)=c$.

 It shows that there 
is a unique map $l$ :
$$\begin{tikzcd}
C'\arrow[rrdd,"l",near end, dotted]\arrow[rrdddd,"f"{left}]\arrow[rrrrdd,"!"]\\\\
&&C\arrow[dd,"i(C)"{left},near start]\arrow[rr,"!"]&&1\arrow[dd,"i(1)"]\\\\
&&R(C)\arrow[rr,"R(!)"{below}]&&R(1)
\end{tikzcd}$$
defined by 
$l(x)=f(x)$.
\end{proof}

\subsection{Rectangular divisors}
\label{connected-divisors}

\begin{definition}
A $n$-\textit{divisor} is a configuration $C_n$ equipped with chosen boxes for each coordinate in it,
thus it is an expression :
$$X=A_{(k^{1}_{1},\cdots,k^{1}_{n})}dx^{i}_{k^{1}_i}+\cdots+
A_{(k^{l}_{1},\cdots,k^{l}_{n})}dx^{i}_{k^{l}_i}+\cdots+A_{(k^{r}_{1},
\cdots,k^{r}_{n})}dx^{i}_{k^{r}_i}$$
where $Adx^{i}_{k_i}$ are basic divisors.
\end{definition}

\begin{remark}
A $n$-divisor $X$ must be thought up to its translations in the network $\mathcal{Z}_n$. Coordinates
are used as guides to build their associated sketches \ref{sketches}.
\end{remark}

\begin{proposition}
Any $n$-divisor has an underlying structure of cubical set.
\end{proposition}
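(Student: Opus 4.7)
The plan is to exhibit the underlying cubical set of an $n$-divisor $X$ as a colimit in $\CS$ of the cubical sets underlying its basic divisor summands, glued along the shared faces prescribed by the $j$-gluing locus of the ambient configuration. Write $X = \sum_{l=1}^{r} A_{l}\,dx^{i}_{k^{l}_{i}}$, where each $A_{l}\,dx^{i}_{k^{l}_{i}}$ is a basic divisor. By the proposition immediately preceding, each of the summands already carries an underlying cubical set structure (with connections), so the task reduces to specifying how these summands are to be amalgamated.

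For each direction $j\in\llbracket 1,n\rrbracket$ and each pair $(dx^{i}_{k^{a}_{i}},dx^{i}_{k^{b}_{i}})$ in the $j$-gluing locus $C^{j\text{-gluing}}_{n}$ of the underlying configuration, the $j$-adjacency of $A_{a}\,dx^{i}_{k^{a}_{i}}$ and $A_{b}\,dx^{i}_{k^{b}_{i}}$ forces the equality
\[
s^{n}_{n-1,j}(A_{a})\,dx^{i}_{k^{a}_{i}}\setminus j \;=\; t^{n}_{n-1,j}(A_{b})\,dx^{i}_{k^{b}_{i}}\setminus j,
\]
via the definitions of source and target of a basic divisor. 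I would then assemble the diagram $\mathcal{D}:I\to\CS$ whose vertex-objects are the underlying cubical sets of the $A_{l}\,dx^{i}_{k^{l}_{i}}$, whose edge-objects are the common shared $(n-1)$-cubical sets coming from each gluing pair, and whose arrows are the two canonical inclusions of each shared face into the relevant pair of summands. Because $\CS$ is cocomplete, I would define the underlying cubical set of $X$ as $\underset{I}{\operatorname{colim}}\,\mathcal{D}$. The sources, targets, and lower-dimensional face maps are then those induced at the colimit level, and coincide with the componentwise maps $s^{n}_{n-1,j}$, $t^{n}_{n-1,j}$ on each basic divisor.

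Verifying the cubical identities amounts to checking them locally on each basic divisor, where they already hold by the preceding proposition, and confirming that the identifications from the colimit are compatible with the identities $ss=ss$, $st=ts$, $tt=tt$. This compatibility is immediate because each identification sits inside the underlying network $\mathcal{Z}_{n}$ and is respected by contraction, hence by the face maps on each summand. The main obstacle I anticipate is purely combinatorial bookkeeping: a single $(n-2)$-face of $X$ can arise from several of its basic divisors via successive $j$-adjacencies in different directions $j$, and one must show that all of the resulting identifications are mutually consistent. This is handled by iterating the argument used for a single gluing pair, invoking the connexity property of configurations together with the lemmas on $j$-contractions of $j$-predomains and $j$-presources proved earlier in this section; these ensure that parallel zigzags of contractions in the gluing locus produce equal face data in the colimit.
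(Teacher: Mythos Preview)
Your approach and the paper's are quite different, and yours both over-engineers the problem and introduces a genuine gap.

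The paper's argument is almost trivial: given $X=\sum_{l=1}^{r} A_{l}\,dx^{i}_{k^{l}_{i}}$, one simply \emph{defines} the $j$-source and $j$-target of $X$ componentwise,
\[
s^{n}_{n-1,j}(X):=\sum_{l=1}^{r} s^{n}_{n-1,j}\bigl(A_{l}\,dx^{i}_{k^{l}_{i}}\bigr),\qquad
t^{n}_{n-1,j}(X):=\sum_{l=1}^{r} t^{n}_{n-1,j}\bigl(A_{l}\,dx^{i}_{k^{l}_{i}}\bigr),
\]
each of which is again an $(n-1)$-divisor. The cubical identities hold because they hold termwise on each basic divisor. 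No colimit, no gluing, no consistency check is required; the ``cubical set'' in question has $X$ as its unique $n$-cell and the formal sums above as its $(n-1)$-cells, etc.

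Your construction instead builds a colimit in $\CS$ by gluing the basic divisors along shared faces. That yields a \emph{different} object: a presheaf whose $n$-cells are the individual $A_{l}\,dx^{i}_{k^{l}_{i}}$, not a structure in which $X$ itself is the top cell. This colimit viewpoint is essentially what the paper develops later (the sketches $\mathcal{E}_{X}$ and their colimits in \S\ref{sketches} and \S\ref{proof}), and it relies on notions (the $j$-gluing locus, free source/target loci) that the paper introduces only \emph{after} this proposition, using it as input.

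There is also a gap: you appeal to ``the connexity property of configurations'' and to the lemmas on $j$-contractions of $j$-predomains and $j$-presources to handle the bookkeeping. But a general $n$-divisor is \emph{not} assumed to be connected; connectedness is an additional hypothesis introduced later in the same section, and those lemmas are stated only for connected configurations. So your consistency argument does not apply at the stated level of generality.
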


\begin{proof}
If $X=A_{(k^{1}_{1},\cdots,k^{1}_{n})}dx^{i}_{k^{1}_i}+\cdots+
A_{(k^{l}_{1},\cdots,k^{l}_{n})}dx^{i}_{k^{l}_i}+\cdots+A_{(k^{r}_{1},
\cdots,k^{r}_{n})}dx^{i}_{k^{r}_i}$ is a $n$-divisor, then put :

\begin{itemize}
\item $j$-sources $(j\in\llbracket 1,n\rrbracket)$ are given by :
$$s^{n}_{n-1,j}(X)=s^{n}_{n-1,j}(A_{(k^{1}_{1},\cdots,k^{1}_{n})}dx^{i}_{k^{1}_i})+\cdots+
s^{n}_{n-1,j}(A_{(k^{l}_{1},\cdots,k^{l}_{n})}dx^{i}_{k^{l}_i})+\cdots+s^{n}_{n-1,j}(A_{(k^{r}_{1},
\cdots,k^{r}_{n})}dx^{i}_{k^{r}_i})$$

\item $j$-targets $(j\in\llbracket 1,n\rrbracket)$ are given by :
$$t^{n}_{n-1,j}(X)=s^{n}_{n-1,j}(A_{(k^{1}_{1},\cdots,k^{1}_{n})}dx^{i}_{k^{1}_i})+\cdots+
t^{n}_{n-1,j}(A_{(k^{l}_{1},\cdots,k^{l}_{n})}dx^{i}_{k^{l}_i})+\cdots+t^{n}_{n-1,j}(A_{(k^{r}_{1},
\cdots,k^{r}_{n})}dx^{i}_{k^{r}_i})$$

\item If $x\in f_p(X)$ is a $p$-face of $X$, then it is an $(n-p)$-divisor obtained by a zigzag of 
sources-targets of the $n$-divisor $X$.

\end{itemize}
\end{proof}

Some notions attached to $n$-divisors shall be useful :

\begin{itemize}
\item Let $X=A_{(k^{1}_{1},\cdots,k^{1}_{n})}dx^{i}_{k^{1}_i}+\cdots+
A_{(k^{l}_{1},\cdots,k^{l}_{n})}dx^{i}_{k^{l}_i}+\cdots+A_{(k^{r}_{1},
\cdots,k^{r}_{n})}dx^{i}_{k^{r}_i}$ be a divisor. If two basic divisors 
$Adx^{i}_{k_i}$, $A'dx^{i}_{k'_i}$ in $X$ are $j$-adjacent we get two possible diagrams :

$$\begin{tikzcd}
Adx^{i}_{k_i}&&A'dx^{i}_{k'_i}\\\\
&t^{n}_{n-1,j}(Adx^{i}_{k_i})=s^{n}_{n-1,j}(A'dx^{i}_{k'_i})\arrow[luu,"t^{n}_{n-1,j}"]\arrow[ruu,"s^{n}_{n-1,j}"]
\end{tikzcd}\qquad
\begin{tikzcd}
A'dx^{i}_{k'_i}&&Adx^{i}_{k_i}\\\\
&t^{n}_{n-1,j}(A'dx^{i}_{k'_i})=s^{n}_{n-1,j}(Adx^{i}_{k_i})\arrow[luu,"t^{n}_{n-1,j}"]\arrow[ruu,"s^{n}_{n-1,j}"]
\end{tikzcd}$$

\end{itemize}

\begin{itemize}

\item A $j$-gluing data ($j\in\llbracket 1,n\rrbracket$) for $X$ is given by a couple $(A_{(k^{l}_{1},\cdots,k^{l}_{n})}dx^{i}_{k^{l}_i},
A_{(k^{l'}_{1},\cdots,k^{l'}_{n})}dx^{i}_{k^{l'}_i})$ of basic divisors of $X$ which are $j$-adjacent and are such that 

$$t^{n}_{n-1,j}(A_{(k^{l}_{1},\cdots,k^{l}_{n})}dx^{i}_{k^{l}_i})=s^{n}_{n-1,j}(A_{(k^{l'}_{1},\cdots,k^{l'}_{n})}dx^{i}_{k^{l'}_i})$$.

Such $j$-gluing data can be written $A_{(k^{l}_{1},\cdots,k^{l}_{n})}dx^{i}_{k^{l}_i}+_{j}
A_{(k^{l'}_{1},\cdots,k^{l'}_{n})}dx^{i}_{k^{l'}_i}$ and underlies the diagram of the type 

$$\begin{tikzcd}
A_{(k^{l}_{1},\cdots,k^{l}_{n})}dx^{i}_{k^{l}_i}&&A_{(k^{l'}_{1},\cdots,k^{l'}_{n})}dx^{i}_{k^{l'}_i}\\\\
&t^{n}_{n-1,j}(A_{(k^{l}_{1},\cdots,k^{l}_{n})}dx^{i}_{k^{l}_i})=s^{n}_{n-1,j}(A_{(k^{l'}_{1},\cdots,k^{l'}_{n})}dx^{i}_{k^{l'}_i})\arrow[luu,"t^{n}_{n-1,j}"]\arrow[ruu,"s^{n}_{n-1,j}"]
\end{tikzcd}$$

called a \textit{basic $j$-gluing locus} of $X$. The $j$-\textit{gluing locus} of $X$ is the set of such basic $j$-gluing locus. The \textit{gluing locus} of $X$ is the set of all $j$-gluing locus for all the direction $j\in\llbracket 1,n\rrbracket$.

\item A basic divisor $A_{(k^{l'}_{1},\cdots,k^{l'}_{n})}dx^{i}_{k^{l'}_i}$ of a divisor $X$ has \textit{free $j$-source} if there is no basic 
divisor $A_{(k^{l}_{1},\cdots,k^{l}_{n})}dx^{i}_{k^{l}_i}$ of $X$ such that the couple 
$(A_{(k^{l}_{1},\cdots,k^{l}_{n})}dx^{i}_{k^{l}_i},
A_{(k^{l'}_{1},\cdots,k^{l'}_{n})}dx^{i}_{k^{l'}_i})$ is a $j$-gluing data. The set of formal arrows :

$$\begin{tikzcd}
s^{n}_{n-1,j}(A_{(k^{l'}_{1},\cdots,k^{l'}_{n})}dx^{i}_{k^{l'}_i})\arrow[rr,"s^{n}_{n-1,j}"]&&A_{(k^{l'}_{1},\cdots,k^{l'}_{n})}dx^{i}_{k^{l'}_i}
\end{tikzcd}$$

associated to basic divisors in $X$ which have free $j$-sources is called the \textit{$j$-free sources locus} of $X$. The 
\textit{free sources locus} of $X$ is the set of all $j$-free sources locus for all the direction $j\in\llbracket 1,n\rrbracket$.

\item A basic divisor $A_{(k^{l}_{1},\cdots,k^{l}_{n})}dx^{i}_{k^{l}_i}$ of a divisor $X$ has \textit{free $j$-target} if there is no basic 
divisor $A_{(k^{l'}_{1},\cdots,k^{l'}_{n})}dx^{i}_{k^{l'}_i}$ of $X$ such that the couple 
$(A_{(k^{l}_{1},\cdots,k^{l}_{n})}dx^{i}_{k^{l}_i},
A_{(k^{l'}_{1},\cdots,k^{l'}_{n})}dx^{i}_{k^{l'}_i})$ is a $j$-gluing data. The set of formal arrows :

$$\begin{tikzcd}
t^{n}_{n-1,j}(A_{(k^{l}_{1},\cdots,k^{l}_{n})}dx^{i}_{k^{l}_i})\arrow[rr,"t^{n}_{n-1,j}"]&&A_{(k^{l}_{1},\cdots,k^{l}_{n})}dx^{i}_{k^{l}_i}
\end{tikzcd}$$

associated to basic divisors in $X$ which have free $j$-targets is called the \textit{$j$-free targets locus} of $X$. The 
\textit{free targets locus} of $X$ is the set of all $j$-free targets locus for all the direction $j\in\llbracket 1,n\rrbracket$.

 Consider a $n$-divisor $X=A_{(k^{1}_{1},\cdots,k^{1}_{n})}dx^{i}_{k^{1}_i}+\cdots+
A_{(k^{l}_{1},\cdots,k^{l}_{n})}dx^{i}_{k^{l}_i}+\cdots+A_{(k^{r}_{1},
\cdots,k^{r}_{n})}dx^{i}_{k^{r}_i}$. We are going to define degeneracies of $X$ :
$1^{n}_{n+1,i}(X)$, $1^{n,\gamma}_{n+1,i}(X)$ by using \textit{the 
axioms of degeneracies-compositions} :

\begin{definition}

\begin{itemize}
\item $1^{n}_{n+1,i}(a\circ^{n}_{j}b)=1^{n}_{n+1,i}(a)\circ^{n+1}_{j+1}1^{n}_{n+1,i}(b)$ if $1\leq i\leq j\leq n$
  
  $1^{n}_{n+1,i}(a\circ^{n}_{j}b)=1^{n}_{n+1,i}(a)\circ^{n+1}_{j}1^{n}_{n+1,i}(b)$ if $1\leq j<i\leq n+1$  
  
  \item $1^{n,\gamma}_{n+1,i}(a\circ^{n}_{j}b)=1^{n,\gamma}_{n+1,i}(a)\circ^{n+1}_{j+1}1^{n,\gamma}_{n+1,i}(b)$ if $1\leq i< j\leq n$
  
  $1^{n,\gamma}_{n+1,i}(a\circ^{n}_{j}b)=1^{n,\gamma}_{n+1,i}(a)\circ^{n+1}_{j}1^{n,\gamma}_{n+1,i}(b)$ if $1\leq j< i\leq n$
  
\item First transport laws : for $1\leq j\leq n$
\[
1^{n,+}_{n+1,j}(a\circ^{n}_{j}b)=
  \begin{bmatrix}
    1^{n,+}_{n+1,j}(a) & 1^{n}_{n+1,j}(a) \\
    1^{n}_{n+1,j+1}(a)& 1^{n,+}_{n+1,j}(b) 
 \end{bmatrix}
\]

   \item Second transport laws : for $1\leq j\leq n$
\[
1^{n,-}_{n+1,j}(a\circ^{n}_{j}b)=
  \begin{bmatrix}
    1^{n,-}_{n+1,j}(a) & 1^{n}_{n+1,j+1}(b) \\
    1^{n}_{n+1,j}(b)& 1^{n,-}_{n+1,j}(b) 
 \end{bmatrix}
\]
\end{itemize}
\end{definition}

\begin{definition}
Consider a divisor $X=A_{(k^{1}_{1},\cdots,k^{1}_{n})}dx^{i}_{k^{1}_i}+\cdots+
A_{(k^{l}_{1},\cdots,k^{l}_{n})}dx^{i}_{k^{l}_i}+\cdots+A_{(k^{r}_{1},
\cdots,k^{r}_{n})}dx^{i}_{k^{r}_i}$. If $Adx^{i}_{k^{l}_i}$
is an isolated basic divisor of $X$ then we just define
$1^{n}_{n+1,i}(Adx^{i}_{k^{l}_i})$, $1^{n,-}_{n+1,i}(Adx^{i}_{k^{l}_i})$
and $1^{n,+}_{n+1,i}(Adx^{i}_{k^{l}_i})$ as in \ref{degenerate-boxes};
for the direction $j\in\llbracket 1,n\rrbracket$ consider 
a $j$-gluing data $A_{(k^{l}_{1},\cdots,k^{l}_{n})}dx^{i}_{k^{l}_i}+_j
A_{(k^{l'}_{1},\cdots,k^{l'}_{n})}dx^{i}_{k^{l'}_i}$ of $X$ (see just above). 
\begin{itemize}
\item If $1\leq i\leq j\leq n$ or if $1\leq j<i\leq n+1$, then we put :
$$1^{n}_{n+1,i}(A_{(k^{l}_{1},\cdots,k^{l}_{n})}dx^{i}_{k^{l}_i}+_j
A_{(k^{l'}_{1},\cdots,k^{l'}_{n})}dx^{i}_{k^{l'}_i}):=1^{n}_{n+1,i}(A_{(k^{l}_{1},\cdots,k^{l}_{n})}dx^{i}_{k^{l}_i})+
1^{n}_{n+1,i}(A_{(k^{l'}_{1},\cdots,k^{l'}_{n})}dx^{i}_{k^{l'}_i})$$

\item if $1\leq i< j\leq n$ or $1\leq j< i\leq n$ the we put :
$$1^{n,\gamma}_{n+1,i}(A_{(k^{l}_{1},\cdots,k^{l}_{n})}dx^{i}_{k^{l}_i}+_j
A_{(k^{l'}_{1},\cdots,k^{l'}_{n})}dx^{i}_{k^{l'}_i}):=1^{n,\gamma}_{n+1,i}(A_{(k^{l}_{1},\cdots,k^{l}_{n})}dx^{i}_{k^{l}_i})+
1^{n,\gamma}_{n+1,i}(A_{(k^{l'}_{1},\cdots,k^{l'}_{n})}dx^{i}_{k^{l'}_i})$$

\item If $1\leq j\leq n$ then we put : 
$$1^{n,+}_{n+1,j}(A_{(k^{l}_{1},\cdots,k^{l}_{n})}dx^{i}_{k^{l}_i}+_j
A_{(k^{l'}_{1},\cdots,k^{l'}_{n})}dx^{i}_{k^{l'}_i}):=1^{n,+}_{n+1,j}(A_{(k^{l}_{1},\cdots,k^{l}_{n})}dx^{i}_{k^{l}_i})+\\
1^{n,+}_{n+1,j}(A_{(k^{l'}_{1},\cdots,k^{l'}_{n})}dx^{i}_{k^{l'}_i})+1^{n}_{n+1,j}(A_{(k^{l}_{1},\cdots,k^{l}_{n})}dx^{i}_{k^{l}_i})
+1^{n}_{n+1,j+1}(A_{(k^{l}_{1},\cdots,k^{l}_{n})}dx^{i}_{k^{l}_i})$$

and 
$$1^{n,-}_{n+1,j}(A_{(k^{l}_{1},\cdots,k^{l}_{n})}dx^{i}_{k^{l}_i}+_j
A_{(k^{l'}_{1},\cdots,k^{l'}_{n})}dx^{i}_{k^{l'}_i}):=1^{n,-}_{n+1,j}(A_{(k^{l}_{1},\cdots,k^{l}_{n})}dx^{i}_{k^{l}_i})+
1^{n,-}_{n+1,j}(A_{(k^{l'}_{1},\cdots,k^{l'}_{n})}dx^{i}_{k^{l'}_i})+1^{n}_{n+1,j}(A_{(k^{l'}_{1},\cdots,k^{l'}_{n})}dx^{i}_{k^{l'}_i})
+1^{n}_{n+1,j+1}(A_{(k^{l'}_{1},\cdots,k^{l'}_{n})}dx^{i}_{k^{l'}_i})$$
\end{itemize}
These give the definition of $1^{n}_{n+1,i}(X)$ for $i\in\llbracket 1,n+1\rrbracket$ and of $1^{n,-}_{n+1,i}(X)$, 
$1^{n,-}_{n+1,i}(X)$ for $i\in\llbracket 1,n\rrbracket$
\end{definition}

\begin{proposition}
Any $n$-divisor has an underlying structure of reflexive cubical set.
\end{proposition}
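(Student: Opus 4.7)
The plan is to verify that the operations $1^{n}_{n+1,i}$ and $1^{n,\gamma}_{n+1,i}$ just defined on a divisor $X$, together with the sources $s^{n}_{n-1,j}$ and targets $t^{n}_{n-1,j}$ from the previous proposition, satisfy all axioms of a reflexive cubical set (cubical set relations plus the identities for degeneracies and connections listed in the paper's cubical site $\Cr$). The strategy is local-to-global: each basic divisor $Adx^{i}_{k_i}$ already carries such a structure through its degenerate box $(\square^{dx^{i}_{k_i}}_{1(n)},\equiv_A)$ built in \ref{degenerate-boxes}, and the only real question is whether the gluing prescription given just above preserves these identities across $j$-adjacent basic divisors.

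First I would check well-definedness. A divisor $X$ admits a unique decomposition into basic divisors, so $1^{n}_{n+1,i}(X)$ and $1^{n,\gamma}_{n+1,i}(X)$ are unambiguously defined once the degeneracies are fixed on each basic divisor and on each basic $j$-gluing data. For $j$-gluing data $A_{(k^{l})}dx^{i}_{k^{l}_i}+_{j}A_{(k^{l'})}dx^{i}_{k^{l'}_i}$ the formulas are precisely the transport and interchange laws of the preceding definition block, so by construction they agree with what degeneracies must do on a composite. Next I would verify the purely cubical identities (of the form $s\circ 1 = \mathrm{id}$, $t \circ 1 = \mathrm{id}$, $1 \circ 1 = 1 \circ 1$, and the interplay $s \circ 1^{\gamma}$, $t\circ 1^{\gamma}$ with connections) on each basic divisor: this reduces, via the bracket $(\square^{dx^{i}_{k_i}}_{1(n)},\equiv_A)$, to the cubical identities satisfied inside $\mathbb{R}(1)$, hence to the fact (\ref{monad-R}) that $\mathbb{R}$ is the monad of cubical reflexive sets.

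Then I would globalise. For two $j$-adjacent basic divisors $Adx^{i}_{k_i}$ and $A'dx^{i}_{k'_i}$ matched along $s^{n}_{n-1,j}(Adx^{i}_{k_i}) = t^{n}_{n-1,j}(A'dx^{i}_{k'_i})$, an identity of the form $f(1^{n}_{n+1,i}(X))=g(X)$ (where $f,g$ are generating arrows of $\Cr$) must hold on the sum $Adx^{i}_{k_i}+_{j}A'dx^{i}_{k'_i}$. When $i\neq j$ and $i\neq j+1$ this is immediate since the defining formula distributes over $+_{j}$ and the identity holds on each summand. When $i=j$ the relevant case is a connection, and the transport laws force the additional pieces $1^{n}_{n+1,j}(A)$, $1^{n}_{n+1,j+1}(A)$ (resp.\ $1^{n}_{n+1,j}(A')$, $1^{n}_{n+1,j+1}(A')$) to appear in the sum: one checks that their sources and targets exactly close up the $2\times 2$ matrix expansion, so the axioms for $1^{n,+}_{n+1,j}$ and $1^{n,-}_{n+1,j}$ are satisfied.

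The main obstacle will be the verification of the transport laws for $1^{n,\pm}_{n+1,j}$ on a chain of several $j$-adjacent basic divisors $A^{1}dx^{i}_{k^{1}}+_{j}\cdots +_{j}A^{r}dx^{i}_{k^{r}}$, because the $2\times 2$ interchange pattern must compose coherently. I would argue this by induction on $r$: assuming the identity holds up to the first $r-1$ pieces (producing a bona fide reflexive cubical substructure), the addition of $A^{r}dx^{i}_{k^{r}}$ along $+_{j}$ reduces by associativity of $+_{j}$ to the two-summand case already treated, so the inductive step follows. All other relations (compatibility of $1^{n}_{n+1,i}$ with itself for $i\neq j$, and the cubical identities between sources/targets of degeneracies) are then routine and follow directly from the basic-divisor case because both sides distribute over $+_{j}$. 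This completes the verification that $X$ carries a reflexive cubical set structure.
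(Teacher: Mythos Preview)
The paper states this proposition \emph{without proof}: immediately after defining $1^{n}_{n+1,i}(X)$ and $1^{n,\gamma}_{n+1,i}(X)$ on $j$-gluing data via the degeneracy--composition axioms and the transport laws, the author simply records the proposition and moves on to the definition of connected divisors. So there is no argument in the paper to compare against; the author evidently regards the claim as immediate from the way the degeneracies were defined.

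Your proposal is therefore a more detailed verification than the paper offers, and the local-to-global strategy you outline is the natural one. A few comments on where your sketch could be tightened. First, well-definedness is slightly more delicate than you indicate: a single basic divisor $A\,dx^{i}_{k_i}$ may participate in several $j$-gluing data simultaneously (for different directions $j$, or as the left piece of one and the right piece of another), and the paper's definition is phrased piecewise on each gluing pair. You should check that the resulting $(n+1)$-divisor is independent of the order in which these pairs are processed; this amounts to the commutation of the underlying dilatations of coordinates and is straightforward, but it is the real content of ``well-defined'' here. Second, your reduction of the basic-divisor case to identities in $\mathbb{R}(1)$ via $(\square^{dx^{i}_{k_i}}_{1(n)},\equiv_A)$ is exactly right and is the mechanism the paper has in mind. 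Third, your induction on the length of a $j$-chain for the transport laws is sound, but note that associativity of $+_j$ is being used implicitly at the level of configurations (which is clear) and at the level of the extra $1^{n}_{n+1,j}$, $1^{n}_{n+1,j+1}$ terms appearing in the $2\times 2$ expansion (which requires a small bookkeeping check that the added reflexivity pieces from successive steps do not conflict).

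In short: your approach is correct and considerably more explicit than the paper, which treats the statement as a direct consequence of having defined the degeneracies through the very axioms they are required to satisfy.
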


\begin{definition}
A \textit{connected divisor} is a divisor with underlying connected configuration
such that all its pairs of basic divisors which have adjacent coordinates are adjacents.
\end{definition}

Now we are going to define specific connected $n$-divisors, called \textit{rectangular $n$-divisors}, which are
our cubical pasting diagrams. These rectangular $n$-divisors have another notions of sources and targets
that we call the \textit{pasting-sources} and the \textit{pasting-targets}. Their $j$-sources and $j$-targets
that we have defined above
shall be used to build their sketches, but their $j$-pasting-sources and their $j$-pasting-targets shall be used
to build a structure of cubical $\infty$-category on cubical pasting diagrams.

\begin{itemize}

\item A rectangular $n$-configuration is given by a $n$-configuration $C_n=\{dx^{i}_{k_{i}}/k_1\in K_1,\cdots,k_n\in K_n, 
\forall i\in\llbracket 1,n\rrbracket, K_i\subset\mathbb{N}\text{ is a finite set }\}$ of coordinates 
$dx^{i}_{k_i}\in\mathcal{Z}_n$ such that for all $i\in\llbracket 1,n\rrbracket$, the set $K_i\subset\mathbb{N}$ has
the following form : 
$K_i=\{n^{i}_1,n^{i}_1+1,\cdots,n^{i}_1+l^{i}\}$ where $l^{i}\in\mathbb{N}$. When the set of integers $\mathbb{N}$
is seen as a category which morphisms are given by its order, then another way to describe rectangular $n$-configurations
is to see them as $n$-configurations $C_n$ such that all finite subcategories $K_i\subset\mathbb{N}$ are connected.
It is evident to see that rectangular $n$-configurations are specific connected $n$-configurations. 

\begin{definition}
For all integer $n\in\mathbb{N}$, a \textit{rectangular $n$-divisor} is a connected divisor $X$ which underlying 
$n$-configuration is rectangular. Rectangular divisors are also called \textit{cubical pasting $n$-diagrams}.
\end{definition}

\label{teta-divisors}
The set of rectangular divisors is denoted $\mathbb{C}\text{-}\mathbb{P}\text{ast}$. The full subcategory
$\Theta_{0}\subset{\CS}$
which objects are rectangular divisors is called the \textit{cubical} $\Theta_{0}$ and the Yoneda embedding 
$$\begin{tikzcd}
\Theta_{0}\arrow[rr,"\text{Y}"]&&\CS\\
X\arrow[rr,mapsto]&&\text{hom}_{\CS}(X,-)
\end{tikzcd}$$
shall be useful when we will describe the monad $\mathbb{S}$ of cubical strict $\infty$-categories with connections.

\item Consider a rectangular divisors $X=A_{(k^{1}_{1},\cdots,k^{1}_{n})}dx^{i}_{k^{1}_i}+\cdots+
A_{(k^{l}_{1},\cdots,k^{l}_{n})}dx^{i}_{k^{l}_i}+\cdots+A_{(k^{r}_{1},
\cdots,k^{r}_{n})}dx^{i}_{k^{r}_i}$ with underlying $n$-configuration $C_n=\{dx^{i}_{k_{i}}/k_1\in K_1,\cdots,k_n\in K_n, 
\forall i\in\llbracket 1,n\rrbracket, K_i\subset\mathbb{N}\text{ is a finite set }\}$. For each direction
$j\in\llbracket 1,n\rrbracket$ the subset $K_j=\{n^{j}_1,n^{j}_1+1,\cdots,n^{j}_1+l^{j}\}\subset\mathbb{N}$
has a minimum $n_{j}$ and a maximum $n^{j}_1+l^{j}$. The finite set of basic divisors 
$A_{(k^{l}_{1},\cdots,k^{l}_{n})}dx^{i}_{k^{l}_i}$ of $X$ which $dx^{i}_{k^{l}_i}$ has its $j$-depth
equal to $n_{j}$, gives a new rectangular divisor denoted by $\text{pre-}\sigma^{n}_{n-1,j}(X)$ that 
we call the $j$-\textit{pre-source} of $X$. Also the finite set of basic divisors 
$A_{(k^{l}_{1},\cdots,k^{l}_{n})}dx^{i}_{k^{l}_i}$ of $X$ which $dx^{i}_{k^{l}_i}$ has its $j$-depth
equal to $n_{j}+l^{j}$, gives a new rectangular $n$-divisor denoted by $\text{pre-}\tau^{n}_{n-1,j}(X)$ that 
we call the $j$-\textit{pre-target} of $X$. 

The important fact here is : if $X'$ is another rectangular $n$-divisor such that its $j$-pre-source 
$\text{pre-}\sigma^{n}_{n-1,j}(X')$ is built with basic divisors $A'_{(k^{l'}_{1},\cdots,k^{l'}_{n})}dx^{i}_{k^{l'}_i}$
which are $j$-adjacent to basic divisors $A_{(k^{l}_{1},\cdots,k^{l}_{n})}dx^{i}_{k^{l}_i}$ in 
$\text{pre-}\tau^{n}_{n-1,j}(X)$ and vice-versa : each basic divisor $A_{(k^{l}_{1},\cdots,k^{l}_{n})}dx^{i}_{k^{l}_i}$ in 
$\text{pre-}\tau^{n}_{n-1,j}(X)$ is $j$-adjacent to a basic divisor $A'_{(k^{l'}_{1},\cdots,k^{l'}_{n})}dx^{i}_{k^{l'}_i}$
in $\text{pre-}\sigma^{n}_{n-1,j}(X')$ then the sum $X+X'$ is a rectangular $n$-divisor. Furthermore
$s^{n}_{n-1,j}(\text{pre-}\sigma^{n}_{n-1,j}(X'))$ and $t^{n}_{n-1,j}(\text{pre-}\sigma^{n}_{n-1,j}(X))$
are rectangular $(n-1)$-divisors and are equal.

Thus we put :

\begin{definition}
If $X$ is a rectangular divisor, then its \textit{$j$-pasting source} is :
$$\sigma^{n}_{n-1,j}(X):=s^{n}_{n-1,j}(\text{pre-}\sigma^{n}_{n-1,j}(X))$$ 
and its \textit{$j$-pasting target} is :
$$\tau^{n}_{n-1,j}(X):=t^{n}_{n-1,j}(\text{pre-}\tau^{n}_{n-1,j}(X))$$
\end{definition}

\begin{proposition}
$\sigma^{n}_{n-1,j}(X)$ and $\tau^{n}_{n-1,j}(X)$ are rectangular $(n-1)$-divisors called
respectively the $j$-source and the $j$-target of $X$. These faces are the one adapted to define
a cubical $\infty$-categorical structure on cubical pasting diagrams.
\end{proposition}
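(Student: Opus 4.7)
The plan is to reduce the claim to two observations about rectangular configurations: that their $j$-slabs are again rectangular, and that $j$-contraction of a slab produces a rectangular $(n-1)$-configuration. I will treat $\sigma^n_{n-1,j}(X)$ in detail; the argument for $\tau^n_{n-1,j}(X)$ is strictly dual, swapping ``minimum'' for ``maximum'' and $s^n_{n-1,j}$ for $t^n_{n-1,j}$.

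First I would unpack $X$: since its underlying $n$-configuration is rectangular, it is the full product $K_1 \times \cdots \times K_n$ with each $K_i = \{n^i_1, n^i_1+1, \ldots, n^i_1+l^i\}$ an interval of $\mathbb{N}$. By the definition of Section~\ref{teta-divisors}, $\text{pre-}\sigma^n_{n-1,j}(X)$ is exactly the set of basic divisors of $X$ whose $j$-depth equals $n_j := n^j_1$, so its underlying $n$-configuration is
\[
K_1 \times \cdots \times K_{j-1} \times \{n_j\} \times K_{j+1} \times \cdots \times K_n,
\]
again a product of intervals. Hence $\text{pre-}\sigma^n_{n-1,j}(X)$ is itself a rectangular $n$-divisor, inheriting its gluing data from $X$.

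Next I would apply $s^n_{n-1,j}$ to each basic divisor of the pre-source. Using the contraction rule $dx^i_{k_i}\setminus j$ and the formula for sources of basic divisors from Section~\ref{degenerate-boxes}, the result is a family of basic $(n-1)$-divisors whose underlying configuration lives in $\mathcal{Z}_{n-1}$ and is indexed by
\[
K_1 \times \cdots \times \widehat{K_j} \times \cdots \times K_n,
\]
still a product of intervals, hence rectangular. To upgrade this to a \emph{connected} divisor I would verify the adjacency condition: for any two basic divisors in $\sigma^n_{n-1,j}(X)$ whose coordinates are $i$-adjacent (necessarily $i \neq j$, since direction $j$ has been contracted), their preimages in the pre-source are $i$-adjacent basic divisors of $X$, so the matching source-target equality $s^n_{n-1,i}(A\,dx^i_{k_i}) = t^n_{n-1,i}(A'\,dx^i_{k'_i})$ (or its mirror) already holds in $X$; applying $s^n_{n-1,j}$ on both sides and invoking the cubical identity $s s = s s$ in the form $s^{n-1}_{n-2,i'} \circ s^n_{n-1,j} = s^{n-1}_{n-2,j'} \circ s^n_{n-1,i}$, with re-indexing dictated by whether $i < j$ or $i > j$, transports the equality to the $(n-1)$-level and establishes the required $i$-adjacency of the two contracted basic divisors.

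The main obstacle I anticipate is the bookkeeping of these cubical identities on congruence classes of links from Section~\ref{basic-box}: one must confirm that the $s^n_{n-1,j}$-image of the matched source-target equality between two $i$-adjacent basic divisors of $X$ is precisely the matched source-target equality between their contracted images, not merely an equality modulo a re-indexing ambiguity, and one must check that no isolated basic divisor can appear in the slab (which follows from rectangularity: every basic divisor of the slab sits in an interval of length $\geq 1$ in each direction $i \neq j$, so it has at least one $i$-adjacent neighbour whenever $l^i \geq 1$). Once this is in place, the dual argument for $\tau^n_{n-1,j}(X)$ using pretarget and $t^n_{n-1,j}$ proceeds identically and yields the second half of the proposition.
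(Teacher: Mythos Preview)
Your argument is correct and follows the natural route implied by the paper's setup: the slab $\text{pre-}\sigma^{n}_{n-1,j}(X)$ has rectangular configuration $K_1\times\cdots\times\{n_j\}\times\cdots\times K_n$, its $j$-contraction has rectangular configuration $K_1\times\cdots\times\widehat{K_j}\times\cdots\times K_n$, and the cubical face identities transport the adjacency conditions down one level. The paper itself states this proposition without proof, treating it as evident from the preceding ``important fact'' recorded just before the definition of $\sigma^{n}_{n-1,j}$ and $\tau^{n}_{n-1,j}$; your writeup supplies exactly the verification the paper leaves implicit. One small remark: your worry about isolated basic divisors is unnecessary, since the definition of connected divisor only requires that pairs with adjacent \emph{coordinates} be adjacent, which is vacuous when no such pair exists.
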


\end{itemize}
 
\begin{proposition}
If $X$ is a rectangular $n$-divisor, then $1^{n}_{n+1,j}(X)$ and $1^{n,\gamma}_{n+1,j}(X)$ are rectangular $(n+1)$-divisors.
\end{proposition}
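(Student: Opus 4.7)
The proof proceeds by checking in order the three conditions that define a rectangular $(n+1)$-divisor: (i) the underlying configuration is rectangular, (ii) that configuration is connected, and (iii) the connected-divisor condition, i.e.\ any two basic divisors whose coordinates are $l$-adjacent share a common $l$-face.

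First, I would read off the underlying configuration of the dilated divisor from the definition of $1^{n}_{n+1,i}$ and $1^{n,\gamma}_{n+1,i}$ on isolated basic divisors and on $j'$-gluing pairs. In all cases except the connection transport law, the definition is termwise in the gluing, so the underlying configuration of the result is simply the $i$-dilatation of the underlying configuration of $X$; since dilatation inserts the singleton $\{1\}$ in the new slot and re-indexes (but does not break) the original index sets, each resulting index set is still a contiguous interval of $\mathbb{N}$. In the transport-law case ($i = j'$, connection $1^{n,\pm}_{n+1,j}$), each $j$-gluing pair contributes four basic divisors arranged in a $2 \times 2$ block in directions $(j, j+1)$; iterating this across a single $j$-partition of $X$ replaces a chain of length $\ell$ in direction $j$ by a $2 \times \ell$ rectangle in directions $(j, j+1)$ of the result, whose index sets are again contiguous. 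Thus rectangularity of the underlying configuration holds in every case.

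Second, connectedness of the underlying configuration is inherited directly: the lemma proved earlier shows that the $j$-dilatation of any connected $n$-configuration is a connected $(n+1)$-configuration. In the transport-law case the $2 \times 2$ blocks produced from each $j$-gluing pair are individually connected, and they are glued to one another along the same adjacencies that held in $X$, so global connectedness is preserved. Third, I would verify the source/target matching of $l$-adjacent basic divisors in the result. For pairs inherited termwise from the degeneracy of a gluing pair in $X$, the match propagates from the corresponding pair in $X$ via the cubical identities relating face maps to degeneracies (the composites $s^{n+1}_{n,l}\circ 1^{n}_{n+1,i}$ and $t^{n+1}_{n,l}\circ 1^{n}_{n+1,i}$ collapse to identities, face operators, or degeneracies of face operators depending on the relative values of $i$ and $l$), together with the analogous identities for connections. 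For the new adjacencies created inside a $2 \times 2$ block by the transport law, the matching sources and targets along the interior edges of the block are exactly what the transport-law formula encodes.

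The principal obstacle is the transport-law case $i = j$ for $1^{n,\gamma}_{n+1,j}$: one must confirm that, after iterating the transport law over all $j$-gluing pairs of each $j$-partition of $X$, the resulting family of basic divisors assembles into a single rectangular $(n+1)$-block without overlap or hole, and that the interior source/target matches mandated by the transport law are consistent with the matches on the boundary of the block arising from neighbouring $j$-partitions (where distribution, not transport, applies). The cleanest route is to treat each $j$-partition of $X$ separately, exhibit explicitly the $2 \times \ell$-rectangle it produces, and then observe that the gluings between neighbouring $j$-partitions in directions $l \neq j, j+1$ are left untouched by the transport law and so inherit their source/target matches directly from $X$.
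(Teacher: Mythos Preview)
The paper's own proof is a single (in fact unfinished) sentence: ``Evident: just see that the dilatation of a rectangular $n$-configuration''. It only gestures at the fact that the underlying configuration of a degeneracy is obtained by dilatation, and dilatation visibly preserves the ``product of contiguous intervals'' shape. The connected-divisor condition and the transport-law case are not discussed at all. Your write-up therefore goes well beyond the paper: checking rectangularity, connectedness, and the adjacency/source--target matching separately, and singling out the transport law as the only non-trivial case, is exactly the right decomposition and is more careful than what the paper offers.

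One point does need correcting. In the transport-law case $1^{n,\gamma}_{n+1,j}$ with $i=j$, iterating the law along a $j$-chain of length~$\ell$ does \emph{not} produce a $2\times\ell$ rectangle in the new pair of directions $(j,j+1)$: it produces an $\ell\times\ell$ block. For $\ell=2$ the law gives the $2\times 2$ square you describe; for $\ell=3$ one must apply the law again to the inner $1^{n,\gamma}_{n+1,j}$ appearing in a corner, yielding a $3\times 3$ square, and so on (this is the standard ``staircase'' picture for the connection of an $\ell$-fold $j$-composite in a cubical $\omega$-category). Your conclusion is unaffected, since an $\ell\times\ell$ block is still a product of contiguous intervals, hence rectangular; but the intermediate shape you asserted is wrong and the combinatorics of the interior matchings you sketch should be adjusted accordingly. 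With that fix, your argument is sound and strictly more informative than the paper's.
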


\begin{proof}
Evident : just see that the dilatation of a rectangular $n$-configuration  
\end{proof}

\begin{proposition}
The set $\mathbb{C}\text{-}\mathbb{P}\text{ast}$ of rectangular divisors is equipped with a structure of cubical strict $\infty$-category with connections.
\end{proposition}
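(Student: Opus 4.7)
The strategy is to assemble the operations piece by piece from the structure already established on basic divisors and on connected configurations, and then verify the cubical axioms cell by cell. First I would make precise the composition $\circ^{n}_{n-1,j}$: given two rectangular $n$-divisors $X$, $X'$ with $\tau^{n}_{n-1,j}(X)=\sigma^{n}_{n-1,j}(X')$, the $j$-pre-target $\text{pre-}\tau^{n}_{n-1,j}(X)$ and the $j$-pre-source $\text{pre-}\sigma^{n}_{n-1,j}(X')$ have basic divisors in bijection via $j$-adjacency (as observed in the discussion of $\sigma^{n}_{n-1,j}$, $\tau^{n}_{n-1,j}$). Exactly as for connected configurations, I would translate $X'$ along the direction $j$, partition by partition, by the unique integer $l_k$ making each pre-source basic divisor of $X'$ land immediately after its matched pre-target basic divisor of $X$, and then set $X'\circ^{n}_{n-1,j}X$ to be the sum (formal union) of $X$ with that translate. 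By the matching of boxes induced by the equality of sources/targets of basic divisors along the gluing locus, the result is a rectangular $n$-divisor.

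Next I would verify the cubical source/target identities $s s = s s$, $s t = t s$, $t t = t t$ (across distinct directions) and the collapsing identities for equal directions: these follow directly from the already-established \emph{trivial structure} on coordinates (contractions commuting up to the prescribed indexing) together with the fact that sources and targets of a basic divisor are computed on its underlying degenerate box $(\square^{dx^i_{k_i}}_{1(n)},\equiv_A)$, where the identities hold by construction of the links in \ref{basic-box} and the congruences in \ref{degenerate-boxes}. The degeneracy identities among $1^{n}_{n+1,j}$ and the connections $1^{n,\gamma}_{n+1,j}$ then follow from the corresponding identities for the dilatations $dx^i_{k_i}+j$ on coordinates and for $1^{n}_{n+1,j}(A)$, $1^{n,\gamma}_{n+1,j}(A)$ on the degenerate $n$-cells $A\in\mathbb{R}(1)(n)$ inherited from the monad $\mathbb{R}$.

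Next I would check the interaction of $\circ^{n}_{n-1,j}$ with sources, targets, degeneracies and connections. The compatibility $s^{n}_{n-1,k}(X'\circ^{n}_{n-1,j}X)$ splits into the two cases $k=j$ and $k\neq j$, each reducing to a basic-divisor check on the sum decomposition produced by the translate-and-glue recipe; the case $k\neq j$ uses that translations along direction $j$ preserve the $k$-pre-source partitions. For the transport laws of $1^{n,\pm}_{n+1,j}$ against $\circ^{n}_{n-1,j}$, the ad-hoc definitions given in \ref{connected-divisors} on a $j$-gluing datum $Adx^{i}_{k^{l}_{i}}+_{j}A'dx^{i}_{k^{l'}_{i}}$ were designed to match exactly the two transport matrices; thus the laws hold divisor by divisor on the gluing locus, and globally by summing over the locus.

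Finally, associativity of each $\circ^{n}_{n-1,j}$ reduces to associativity of disjoint union of basic divisors together with associativity of integer translation, and the cubical interchange $(y'\circ^{n}_{n-1,q}x')\circ^{n}_{n-1,p}(y\circ^{n}_{n-1,q}x)=(y'\circ^{n}_{n-1,p}y)\circ^{n}_{n-1,q}(x'\circ^{n}_{n-1,p}x)$ follows because both sides, after translating, describe the same rectangular divisor whose underlying configuration is the rectangular union of the four rectangular pieces in the $(p,q)$-plane. The main obstacle, which is really bookkeeping rather than a conceptual difficulty, is tracking the translations $l_k$ under iterated compositions so that the two sides of the interchange and associativity equations produce \emph{literally} the same indexed sum of basic divisors; this is handled by working up to the canonical translation normalisation already implicit in the convention that a divisor is considered up to translation in $\mathcal{Z}_n$.
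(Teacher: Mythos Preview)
Your proposal is correct and follows essentially the same approach as the paper: the composition $X'\circ^{n}_{n-1,j}X$ is the sum $X+X'$ (after the $j$-translation you describe, which the paper leaves implicit via the convention that divisors are taken up to translation in $\mathcal{Z}_n$), and the degeneracies and connections are those already defined for general divisors in \ref{connected-divisors}. The paper's own proof is in fact much terser than yours --- it merely lists the operations and does not spell out any axiom verification --- so your checks of the cubical identities, the source/target/degeneracy compatibilities, the transport laws, associativity, and interchange constitute a genuine elaboration rather than a different route.
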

\label{rectangle}

\begin{proof}
\begin{itemize}
\item For all integer $n\in\mathbb{N}$, $n$-cells of $\mathbb{C}\text{-}\mathbb{P}\text{ast}$ are 
rectangular $n$-divisors.

\item Consider two rectangular $n$-divisors $X$ and $X'$ such that 
$\tau^{n}_{n-1,j}(X)=\sigma^{n}_{n-1,j}(X')$. Then we define :
$$X\circ^{n}_{n-1,j}X':=X+X'$$

\item If $X$ is a rectangular $n$-divisor, then we defined degeneracies 
$1^{n}_{n+1,j}(X)$ for $j\in\llbracket 1,n+1\rrbracket$ and of $1^{n,-}_{n+1,j}(X)$, 
$1^{n,-}_{n+1,j}(X)$ for $j\in\llbracket 1,n\rrbracket$.
\end{itemize}
\end{proof}

\subsection{Cubical inductive sketches}
 \label{sketches}
 Let $X=A_{(k^{1}_{1},\cdots,k^{1}_{n})}dx^{i}_{k^{1}_i}+\cdots+
A_{(k^{l}_{1},\cdots,k^{l}_{n})}dx^{i}_{k^{l}_i}+\cdots+A_{(k^{r}_{1},
\cdots,k^{r}_{n})}dx^{i}_{k^{r}_i}$ be a divisor. In this section
we associate to it a sketch $\mathcal{E}_X$ which has a cubical
set structure such that $n$-cells of it are themselves sketches build with cocones 
called \textit{basic gluing locus} and \textit{free sources-targets locus}. When $X$ is
a rectangular divisor, its \textit{rectangular sketch} $\mathcal{E}_X$ has like rectangular
divisors, another notions of sources-targets, called again \textit{pasting-sources}
and \textit{pasting-targets}. These last notion of sources-targets allow
to build a cubical strict $\infty$-categorical structure with connections 
on rectangular sketches.

\begin{definition}
If $X$ is a divisor, its $j$-\textit{sketch source} denoted by $\mathcal{E}^{n-1}_{s^{n}_{n-1,j}(X)}$ is the sketch 
obtained as the union of its $j$-gluing locus and its $j$-free sources locus. Its $j$-\textit{sketch target} denoted by 
$\mathcal{E}^{n-1}_{t^{n}_{n-1,j}(X)}$ is the sketch obtained as the union of its $j$-gluing locus and its $j$-free targets locus. 
\end{definition}

Thus these sketches $\mathcal{E}^{n-1}_{s^{n}_{n-1,j}(X)}$ and $\mathcal{E}^{n-1}_{t^{n}_{n-1,j}(X)}$  are characterized
as follow : we consider the $(n-1)$-divisors $s^{n}_{n-1,j}(X)$ and $t^{n}_{n-1,j}(X)$, and with it we select as above 
$j$-gluing locus, $j$-free sources locus and $j$-free targets locus of 
$X$. This characterization is crucial because just by using $j$-sources $s^{n}_{n-1,j}(X)$ and $j$-targets 
$t^{n}_{n-1,j}(X)$ of the divisor $X$ we can identify these sketches $\mathcal{E}^{n-1}_{s^{n}_{n-1,j}(X)}$ and 
$\mathcal{E}^{n-1}_{t^{n}_{n-1,j}(X)}$. We easily see that we can do the same construction for not 
necessary connected $n$-divisors. 

\item Let $X$ be a fixed divisor. As we saw it has a canonical structure of $n$-cubical set, and we denote 
$f_p(X)$ the set of its $p$-faces (which are not necessarily connected divisors). If $x\in f_p(X)$ is a
 $p$-face of $X$ then we denote $f_q(x)$ the set of $q$-faces of $x$. We associate to it the following cubical set 
$\mathcal{E}_X$ of sketches :
\begin{itemize}
\item $\mathcal{E}_X$ has only one $n$-cell still denoted by $X$, which is a \textit{punctual sketch}, i.e which base is reduced to
the point $X$ see as the unique formal point of this base. We denote this singleton by $\mathcal{E}^{n}_X$.

\item Consider the following cubical set :
 
 $$\begin{tikzcd}
 \mathcal{E}^{n}_X \arrow[r, yshift=1.5ex,"s^{n}_{n-1,1}"]
  \arrow[r, yshift=-1.5ex,"t^{n}_{n-1,1}"{below}]
  \arrow[r, yshift=4.5ex,dotted] \arrow[r, yshift=-4.5ex,dotted]
  \arrow[r, yshift=7.5ex,"s^{n}_{n-1,j}"] 
  \arrow[r, yshift=-7.5ex,"t^{n}_{n-1,j}"{below}]
  \arrow[r, yshift=10.5ex,dotted] 
  \arrow[r, yshift=-10.5ex,dotted]      
\arrow[r, yshift=13.5ex,"s^{n}_{n-1,n}"]
\arrow[r, yshift=-13.5ex,"t^{n}_{n-1,n}"{below}]  
& \mathcal{E}^{n-1}_{f_{n-1}(X)} 
 \cdots \mathcal{E}^{4}_{f_{4}(X)} \arrow[r, yshift=1.5ex,"s^{4}_{3,1}"]
  \arrow[r, yshift=-1.5ex,"t^{4}_{3,1}"{below}]
  \arrow[r, yshift=4.5ex,"s^{4}_{3,2}"] \arrow[r, yshift=-4.5ex,"t^{4}_{3,2}"{below}]
  \arrow[r, yshift=7.5ex,"s^{4}_{3,3}"] 
  \arrow[r, yshift=-7.5ex,"t^{4}_{3,3}"{below}]
  \arrow[r, yshift=10.5ex,"s^{4}_{3,4}"] 
  \arrow[r, yshift=-10.5ex,"t^{4}_{3,4}"{below}]      & 
  \mathcal{E}^{3}_{f_{3}(X)} \arrow[r, yshift=1.5ex,"s^{3}_{2,1}"]
  \arrow[r, yshift=-1.5ex,"t^{3}_{2,1}"{below}]
  \arrow[r, yshift=4.5ex,"s^{3}_{2,2}"] 
  \arrow[r, yshift=-4.5ex,"t^{3}_{2,2}"{below}]
  \arrow[r, yshift=7.5ex,"s^{3}_{2,3}"] 
  \arrow[r, yshift=-7.5ex,"t^{3}_{2,3}"{below}]
      & \mathcal{E}^{2}_{f_{2}(X)} 
      \arrow[r, yshift=1.5ex,"s^{2}_{1,1}"]
      \arrow[r, yshift=-1.5ex,"t^{2}_{1,1}"{below}]
      \arrow[r, yshift=4.5ex,"s^{2}_{1,2}"] 
      \arrow[r, yshift=-4.5ex,"t^{2}_{1,2}"{below}]
      & \mathcal{E}^{1}_{f_{1}(X)} \arrow[r, yshift=1.5ex,"s^{1}_{0}"]\arrow[r, yshift=-1.5ex,"t^{1}_{0}"{below}]   
        & \mathcal{E}^{0}_{f_{0}(X)}
 \end{tikzcd}$$
 where $\mathcal{E}^{p}_{f_{p}(X)}$ denotes the sketch which is the set of all sketches $\mathcal{E}^{p}_{x}$ 
 associated to $p$-faces $x\in f_p(X)$ of $X$. We define it by using a simple finite decreasing induction :

 \item as we wrote, the set $\mathcal{E}^{n}_X$ has only the punctual sketch $X$ as element. The maps
 $s^{n}_{n-1,j}$ send $X$ to the sketches $\mathcal{E}^{n-1}_{s^{n}_{n-1,j}(X)}\subset{\mathcal{E}^{n-1}_{f_{n-1}(X)}}$ 
 (the $j$-sketch source of the $n$-divisor $X$), and the maps 
 $t^{n}_{n-1,j}$ send $X$ to the sketches $\mathcal{E}^{n-1}_{t^{n}_{n-1,j}(X)}\subset{\mathcal{E}^{n-1}_{f_{n-1}(X)}}$
 (the $j$-sketch target of the $n$-divisor $X$), for all
 directions $j\in\llbracket 1,n\rrbracket$. 
 
 \item If $x\in f_p(X)$ is a $p$-face of $X$ (it is obtained by zigzags of $(n-p)$-sequences of 
 sources and targets of $X$) then  
 the maps $s^{p}_{p-1,k}$ send the sketch $\mathcal{E}^{p}_x$
 to the sketches $\mathcal{E}^{p-1}_{s^{p}_{p-1,k}(x)}\subset{\mathcal{E}^{p-1}_{f_{p-1}(X)}}$ 
 ($\mathcal{E}^{p-1}_{s^{p}_{p-1,k}(x)}$ is the $k$-sketch source of the $(n-p)$-divisor $x$)
 and the maps
 $t^{p}_{p-1,k}$ send the sketch $\mathcal{E}^{p}_x$
 to the sketches $\mathcal{E}^{p-1}_{t^{p}_{p-1,k}(x)}\subset{\mathcal{E}^{p-1}_{f_{p-1}(X)}}$, 
 ($\mathcal{E}^{p-1}_{t^{p}_{p-1,k}(x)}$ is the $k$-sketch target of the $(n-p)$-divisor $x$),
for all directions $k\in\llbracket 1,p\rrbracket$, where $s^{p}_{p-1,k}(x)\in f_{p-1}(x)$ and 
$t^{p}_{p-1,k}(x)\in f_{p-1}(x)$ are $(p-1)$-faces of $x$.
\end{itemize}
\end{itemize}

\begin{remark}
When we associate a sketch $\mathcal{E}_X$ to a $n$-divisor $X$ we forget coordinates. Here we can see the crucial 
use of coordinates : it allows us to have an accurate description of such sketches and these coordinates are
here used jus as "guides" for building them.   
\end{remark}

Now we are going to give another description of the sources and targets of $\mathcal{E}_X$.

\begin{itemize}
\item Consider the following $(n-p)$-divisor : 
$$x=A_{(k^{1}_{1},\cdots,k^{1}_{p})}dx^{i}_{k^{1}_i}+\cdots+
A_{(k^{l}_{1},\cdots,k^{l}_{p})}dx^{i}_{k^{l}_i}+\cdots+A_{(k^{r}_{1},
\cdots,k^{r}_{p})}dx^{i}_{k^{r(X)}_i}$$
which is a $p$-face of $X$. The $j$-gluing locus $d$ of $x$ are the following cocones $d$ of the sketches $\mathcal{E}^{p-1}_{s^{p}_{p-1,j}(x)}$ and 
$\mathcal{E}^{p-1}_{t^{p}_{p-1,j}(x)}$ :

$$\begin{tikzcd}
A_{(k^{l}_{1},\cdots,k^{l}_{j},\cdots,k^{l}_{p})}dx^{i}_{k^{l}_i} &&A_{(k^{l'}_{1},\cdots,k^{l'}_{j},\cdots,k^{l'}_{p})}dx^{i}_{k^{l'}_i}\\\\
&t^{p}_{p-1,j}(A_{(k^{l}_{1},\cdots,k^{l}_{j},\cdots,k^{l}_{p})}dx^{i}_{k^{l}_i})=
s^{p}_{p-1,j}(A_{(k^{l'}_{1},\cdots,k^{l'}_{j},\cdots,k^{l'}_{p})}dx^{i}_{k^{l'}_i})\arrow[luu,"t^{p}_{p-1,j}"]\arrow[ruu,"s^{p}_{p-1,j}"]
\end{tikzcd}$$

where $A_{(k^{l}_{1},\cdots,k^{l}_{j},\cdots,k^{l}_{p})}dx^{i}_{k^{l}_i}$ and $A_{(k^{l'}_{1},\cdots,k^{l'}_{j},\cdots,k^{l'}_{p})}dx^{i}_{k^{l'}_i}$
are basic divisors of $x$ which are $j$-adjacent and thus are such that : 
 $$(k^{l'}_{1},\cdots,k^{l'}_{j},\cdots,k^{l'}_{p})=(k^{l}_{1},\cdots,k^{l}_{j}+1,\cdots,k^{l}_{p})$$
 and where $t^{p}_{p-1,j}(A_{(k^{l}_{1},\cdots,k^{l}_{j},\cdots,k^{l}_{p})}dx^{i}_{k^{l}_i})$ is itself
 a basic divisor of the divisor $t^{p}_{p-1,j}(x)$ and $s^{p}_{p-1,j}(A_{(k^{l'}_{1},\cdots,k^{l'}_{j},\cdots,k^{l'}_{p})}dx^{i}_{k^{l'}_i})$
 is a basic divisor of the divisor $s^{p}_{p-1,j}(x)$. With this notation we can also describe morphisms of sketches :
 
 $$\begin{tikzcd}
 \mathcal{E}^{p-1}_{s^{p}_{p-1,j}(x)}
 \arrow[r, yshift=1.5ex,"s^{p-1}_{p-2,1}"]
  \arrow[r, yshift=-1.5ex,"t^{p-1}_{p-2,1}"{below}]
  \arrow[r, yshift=4.5ex,dotted] \arrow[r, yshift=-4.5ex,dotted]
  \arrow[r, yshift=7.5ex,"s^{p-1}_{p-2,k}"] 
  \arrow[r, yshift=-7.5ex,"t^{p-1}_{p-2,k}"{below}]
  \arrow[r, yshift=10.5ex,dotted] 
  \arrow[r, yshift=-10.5ex,dotted]      
\arrow[r, yshift=13.5ex,"s^{p-1}_{p-2,p-1}"]
\arrow[r, yshift=-13.5ex,"t^{p-1}_{p-2,p-1}"{below}]  
& \mathcal{E}^{p-2}_{f_{p-2}(s^{p}_{p-1,j}(x))} 
 \end{tikzcd}\qquad\begin{tikzcd}
 \mathcal{E}^{p-1}_{t^{p}_{p-1,j}(x)}
 \arrow[r, yshift=1.5ex,"s^{p-1}_{p-2,1}"]
  \arrow[r, yshift=-1.5ex,"t^{p-1}_{p-2,1}"{below}]
  \arrow[r, yshift=4.5ex,dotted] \arrow[r, yshift=-4.5ex,dotted]
  \arrow[r, yshift=7.5ex,"s^{p-1}_{p-2,k}"] 
  \arrow[r, yshift=-7.5ex,"t^{p-1}_{p-2,k}"{below}]
  \arrow[r, yshift=10.5ex,dotted] 
  \arrow[r, yshift=-10.5ex,dotted]      
\arrow[r, yshift=13.5ex,"s^{p-1}_{p-2,p-1}"]
\arrow[r, yshift=-13.5ex,"t^{p-1}_{p-2,p-1}"{below}]  
& \mathcal{E}^{p-2}_{f_{p-2}(t^{p}_{p-1,j}(x))} 
 \end{tikzcd}$$
 on the $j$-gluing locus of $x$; the description of these morphisms of sketches on the $j$-free sources locus of $x$ and
 on the $j$-free targets locus of $x$ is straightforward when we define their actions only on the $j$-gluing locus.
 
 We describe these morphisms of sketches  
 by defining cocones $s^{p-1}_{p-2,k}(d)$ and $t^{p-1}_{p-2,k}(d)$ as precomposition of the $j$-gluing data $d$ just 
 above 
 $$\begin{tikzcd}
A_{(k^{l}_{1},\cdots,k^{l}_{j},\cdots,k^{l}_{p})}dx^{i}_{k^{l}_i} &&A_{(k^{l'}_{1},\cdots,k^{l'}_{j},\cdots,k^{l'}_{p})}dx^{i}_{k^{l'}_i}\\\\
&A'dx^{i}_{k^{l}_i}\setminus j\arrow[luu,"t^{p}_{p-1,j}"]\arrow[ruu,"s^{p}_{p-1,j}"]
\end{tikzcd}$$
 
 where we denoted $A'=t^{p}_{p-1,j}(A_{(k^{l}_{1},\cdots,k^{l}_{j},\cdots,k^{l}_{p})}dx^{i}_{k^{l}_i})
=s^{p}_{p-1,j}(A_{(k^{l'}_{1},\cdots,k^{l'}_{j},\cdots,k^{l'}_{p})}dx^{i}_{k^{l'}_i})$ and 
$A"_s=s^{p-1}_{p-2,k}(A')$, $A"_t=t^{p-1}_{p-2,k}(A')$, 
which is precomposed with the maps 
 $$\begin{tikzcd}
A"_sdx^{i}_{k"^{l}_i}\setminus(j,k)
\arrow[rr,"s^{p-1}_{p-2,k}"]&&A'dx^{i}_{k^{l}_i}\setminus j\
\end{tikzcd}\qquad
\begin{tikzcd}
A"_tdx^{i}_{k"^{l}_i}\setminus(j,k)
\arrow[rr,"t^{p-1}_{p-2,k}"]&&A'dx^{i}_{k^{l}_i}\setminus j\
\end{tikzcd}$$  

 More precisely the maps $s^{p-1}_{p-2,k}$, $t^{p-1}_{p-2,k}$ send each diagrams $d$ of $\mathcal{E}^{p-1}_{s^{p}_{p-1,j}(x)}$
 and of $\mathcal{E}^{p-1}_{t^{p}_{p-1,j}(x)}$ 
 to diagrams $s^{p-1}_{p-2,k}(d)$, $t^{p-1}_{p-2,k}(d)$ in the sketches $\mathcal{E}^{p-2}_{f_{p-2}(s^{p}_{p-1,j}(x))}$,  
 $\mathcal{E}^{p-2}_{f_{p-2}(t^{p}_{p-1,j}(x))}$, by the precompositions :

$$\begin{tikzcd}
&&A_{(k^{l}_{1},\cdots,k^{l}_{j},\cdots,k^{l}_{p})}dx^{i}_{k^{l}_i}&&&&
A_{(k^{l'}_{1},\cdots,k^{l'}_{j},\cdots,k^{l'}_{p})}dx^{i}_{k^{l'}_i}\\\\
&&&&A'dx^{i}_{k^{l}_i}\setminus j\arrow[lluu,"t^{p}_{p-1,j}"]
\arrow[rruu,"s^{p}_{p-1,j}"]\\\\
&&&&A"_sdx^{i}_{k"^{l}_i}\setminus(j,k)\arrow[uu,"s^{p-1}_{p-2,k}"]
\end{tikzcd}$$

$$\begin{tikzcd}
&&A_{(k^{l}_{1},\cdots,k^{l}_{j},\cdots,k^{l}_{p})}dx^{i}_{k^{l}_i}&&&&
A_{(k^{l'}_{1},\cdots,k^{l'}_{j},\cdots,k^{l'}_{p})}dx^{i}_{k^{l'}_i}\\\\
&&&&A'dx^{i}_{k^{l}_i}\setminus j\arrow[lluu,"t^{p}_{p-1,j}"]
\arrow[rruu,"s^{p}_{p-1,j}"]\\\\
&&&&A"_tdx^{i}_{k"^{l}_i}\setminus(j,k)\arrow[uu,"t^{p-1}_{p-2,k}"]
\end{tikzcd}$$

For this description of the maps $s^{p-1}_{p-2,k}$, $t^{p-1}_{p-2,k}$, we use the same arguments as in \ref{cubic-trees} (the one to get sources and targets for cubical trees) :

\end{itemize}

\begin{itemize}

\item When $j=k$ we obtain $s^{p-1}_{p-2,k}(d)$ by using the diagram :
 
$$\begin{tikzcd}
&&A_{(k^{l}_{1},\cdots,k^{l}_{j},\cdots,k^{l}_{p})}dx^{i}_{k^{l}_i}&&&&
A_{(k^{l'}_{1},\cdots,k^{l'}_{j},\cdots,k^{l'}_{p})}dx^{i}_{k^{l'}_i}\\\\
A'dx^{i}_{k^{l}_i}\setminus j\arrow[rruu,"s^{p}_{p-1,j+1}"{left}]&&&&A'dx^{i}_{k^{l}_i}\setminus j\arrow[lluu,"t^{p}_{p-1,j}"]
\arrow[rruu,"s^{p}_{p-1,j}"{left}]&&&&A'dx^{i}_{k^{l}_i}\setminus j\arrow[lluu,"s^{p}_{p-1,j+1}"]\\\\
&&&&A"dx^{i}_{k"^{l}_i}\setminus(j,k)\arrow[lllluu,"t^{p-1}_{p-2,j}"{left}]\arrow[uu,"s^{p-1}_{p-2,k=j}"]\arrow[rrrruu,"s^{p-1}_{p-2,j}"]
\end{tikzcd}$$

where we denote $A"dx^{i}_{k"^{l}_i}\setminus(j,k)=s^{p-1}_{p-2,k}(A'dx^{i}_{k^{l}_i}\setminus j)$.

\begin{remark}
Of course we have also : 
$t^{p-1}_{p-2,j}(A'dx^{i}_{k^{l}_i}\setminus j)=s^{p-1}_{p-2,k}(A'dx^{i}_{k^{l}_i}\setminus j)=s^{p-1}_{p-2,j}(A'dx^{i}_{k^{l}_i}\setminus j)$
but in $t^{p-1}_{p-2,j}(A'dx^{i}_{k^{l}_i}\setminus j)=t^{p-1}_{p-2,j}(A')dx^{i}_{k^{l}_i}\setminus(j,j)$ and 
$s^{p-1}_{p-2,j}(A'dx^{i}_{k^{l}_i}\setminus j)=s^{p-1}_{p-2,j}(A')dx^{i}_{k^{l}_i}\setminus(j,j)$ the basic divisors $A"$, 
$t^{p-1}_{p-2,j}(A')$ and $s^{p-1}_{p-2,j}(A')$ are not necessarily equals. 
\end{remark}

and thus the morphism of sketches $s^{p-1}_{p-2,k}$ sends $d$ to the following diagram $s^{p-1}_{p-2,k}(d)$ 
of the sketches $\mathcal{E}^{p-2}_{f_{p-2}(s^{p}_{p-1,j}(x))}$, $\mathcal{E}^{p-2}_{f_{p-2}(t^{p}_{p-1,j}(x))}$ :

$$\begin{tikzcd}
 A'dx^{i}_{k^{l}_i}\setminus j&&A'dx^{i}_{k^{l}_i}\setminus j\\\\
&A"dx^{i}_{k"^{l}_i}\setminus (j,j)\arrow[luu,"t^{p-1}_{p-2,j}"]\arrow[ruu,"s^{p-1}_{p-2,j}"]
\end{tikzcd}$$

 And we obtain $t^{p-1}_{p-2,k}(d)$ by using the diagram :
 
 $$\begin{tikzcd}
&&A_{(k^{l}_{1},\cdots,k^{l}_{j},\cdots,k^{l}_{p})}dx^{i}_{k^{l}_i}&&&&
A_{(k^{l'}_{1},\cdots,k^{l'}_{j},\cdots,k^{l'}_{p})}dx^{i}_{k^{l'}_i}\\\\
A'dx^{i}_{k^{l}_i}\setminus j\arrow[rruu,"t^{p}_{p-1,j+1}"{left}]&&&&A'dx^{i}_{k^{l}_i}\setminus j\arrow[lluu,"t^{p}_{p-1,j}"]
\arrow[rruu,"s^{p}_{p-1,j}"{left}]&&&&A'dx^{i}_{k^{l}_i}\setminus j\arrow[lluu,"t^{p}_{p-1,j+1}"]\\\\
&&&&A"dx^{i}_{k"^{l}_i}\setminus(j,k)\arrow[lllluu,"t^{p-1}_{p-2,j}"{left}]\arrow[uu,"t^{p-1}_{p-2,k=j}"]\arrow[rrrruu,"s^{p-1}_{p-2,j}"]
\end{tikzcd}$$

where we denote $A"dx^{i}_{k"^{l}_i}\setminus(j,k)=t^{p-1}_{p-2,k}(A'dx^{i}_{k^{l}_i}\setminus j)$,
and thus the morphism of sketches $t^{p-1}_{p-2,k}$ sends $d$ to the following diagram $t^{p-1}_{p-2,k}(d)$ 
of the sketches $\mathcal{E}^{p-2}_{f_{p-2}(s^{p}_{p-1,j}(x))}$ and $\mathcal{E}^{p-2}_{f_{p-2}(t^{p}_{p-1,j}(x))}$ :

 $$\begin{tikzcd}
 A'dx^{i}_{k^{l}_i}\setminus j&&A'dx^{i}_{k^{l}_i}\setminus j\\\\
&A"dx^{i}_{k"^{l}_i}\setminus (j,j)\arrow[luu,"t^{p-1}_{p-2,j}"]\arrow[ruu,"s^{p-1}_{p-2,j}"]
\end{tikzcd}$$

\item When $k<j$ then we obtain $s^{p-1}_{p-2,k}(d)$ by using the diagram :
 
$$\begin{tikzcd}
&&A_{(k^{l}_{1},\cdots,k^{l}_{j},\cdots,k^{l}_{p})}dx^{i}_{k^{l}_i}&&&&
A_{(k^{l'}_{1},\cdots,k^{l'}_{j},\cdots,k^{l'}_{p})}dx^{i}_{k^{l'}_i}\\\\
A'dx^{i}_{k^{l}_i}\setminus j\arrow[rruu,"s^{p}_{p-1,k}"{left}]&&&&A'dx^{i}_{k^{l}_i}\setminus j\arrow[lluu,"t^{p}_{p-1,j}"]
\arrow[rruu,"s^{p}_{p-1,j}"{left}]&&&&A'dx^{i}_{k^{l}_i}\setminus j\arrow[lluu,"s^{p}_{p-1,k}"]\\\\
&&&&A"dx^{i}_{k"^{l}_i}\setminus(j,k)\arrow[lllluu,"t^{p-1}_{p-2,j-1}"{left}]\arrow[uu,"s^{p-1}_{p-2,k}"]\arrow[rrrruu,"s^{p-1}_{p-2,j-1}"]
\end{tikzcd}$$ 

where we denote $A"dx^{i}_{k"^{l}_i}\setminus(j,k)=s^{p-1}_{p-2,k}(A'dx^{i}_{k^{l}_i}\setminus j)$

\begin{remark}
Of course we have also : 
$t^{p-1}_{p-2,j-1}(A'dx^{i}_{k^{l}_i}\setminus j)=s^{p-1}_{p-2,k}(A'dx^{i}_{k^{l}_i}\setminus j)=s^{p-1}_{p-2,j-1}(A'dx^{i}_{k^{l}_i}\setminus j)$
but in $t^{p-1}_{p-2,j-1}(A'dx^{i}_{k^{l}_i}\setminus j)=t^{p-1}_{p-2,j-1}(A')dx^{i}_{k^{l}_i}\setminus(j,j-1)$ and 
$s^{p-1}_{p-2,j-1}(A'dx^{i}_{k^{l}_i}\setminus j)=s^{p-1}_{p-2,j-1}(A')dx^{i}_{k^{l}_i}\setminus(j,j-1)$ the basic divisors $A"$, 
$t^{p-1}_{p-2,j-1}(A')$ and $s^{p-1}_{p-2,j-1}(A')$ are not necessarily equals. 
\end{remark}
and thus the morphism of sketches $s^{p-1}_{p-2,k}$ sends $d$ to the following diagram $s^{p-1}_{p-2,k}(d)$ 
of the sketches $\mathcal{E}^{p-2}_{f_{p-2}(s^{p}_{p-1,j}(x))}$ and $\mathcal{E}^{p-2}_{f_{p-2}(t^{p}_{p-1,j}(x))}$ :

$$\begin{tikzcd}
 A'dx^{i}_{k^{l}_i}\setminus j&&A'dx^{i}_{k^{l}_i}\setminus j\\\\
&A"dx^{i}_{k"^{l}_i}\setminus (j,j-1)\arrow[luu,"t^{p-1}_{p-2,j-1}"]\arrow[ruu,"s^{p-1}_{p-2,j-1}"]
\end{tikzcd}$$

And we obtain $t^{p-1}_{p-2,k}(d)$ by using the diagram :
 
$$\begin{tikzcd}
&&A_{(k^{l}_{1},\cdots,k^{l}_{j},\cdots,k^{l}_{p})}dx^{i}_{k^{l}_i}&&&&
A_{(k^{l'}_{1},\cdots,k^{l'}_{j},\cdots,k^{l'}_{p})}dx^{i}_{k^{l'}_i}\\\\
A'dx^{i}_{k^{l}_i}\setminus j\arrow[rruu,"t^{p}_{p-1,k}"{left}]&&&&A'dx^{i}_{k^{l}_i}\setminus j\arrow[lluu,"t^{p}_{p-1,j}"]
\arrow[rruu,"s^{p}_{p-1,j}"{left}]&&&&A'dx^{i}_{k^{l}_i}\setminus j\arrow[lluu,"t^{p}_{p-1,k}"]\\\\
&&&&A"dx^{i}_{k"^{l}_i}\setminus(j,k)\arrow[lllluu,"t^{p-1}_{p-2,j-1}"{left}]\arrow[uu,"t^{p-1}_{p-2,k}"]\arrow[rrrruu,"s^{p-1}_{p-2,j-1}"]
\end{tikzcd}$$

where we denote $A"dx^{i}_{k"^{l}_i}\setminus(j,k)=t^{p-1}_{p-2,k}(A'dx^{i}_{k^{l}_i}\setminus j)$,
and thus the morphism of sketches $t^{p-1}_{p-2,k}$ sends $d$ to the following diagram $t^{p-1}_{p-2,k}(d)$ 
of the sketches $\mathcal{E}^{p-2}_{f_{p-2}(s^{p}_{p-1,j}(x))}$ and $\mathcal{E}^{p-2}_{f_{p-2}(t^{p}_{p-1,j}(x))}$ :

$$\begin{tikzcd}
 A'dx^{i}_{k^{l}_i}\setminus j&&A'dx^{i}_{k^{l}_i}\setminus j\\\\
&A"dx^{i}_{k"^{l}_i}\setminus (j,j-1)\arrow[luu,"t^{p-1}_{p-2,j-1}"]\arrow[ruu,"s^{p-1}_{p-2,j-1}"]
\end{tikzcd}$$

\item When $k>j$ then we obtain $s^{p-1}_{p-2,k}(d)$ by using the diagram :
 
$$\begin{tikzcd}
&&A_{(k^{l}_{1},\cdots,k^{l}_{j},\cdots,k^{l}_{p})}dx^{i}_{k^{l}_i}&&&&
A_{(k^{l'}_{1},\cdots,k^{l'}_{j},\cdots,k^{l'}_{p})}dx^{i}_{k^{l'}_i}\\\\
A'dx^{i}_{k^{l}_i}\setminus j\arrow[rruu,"s^{p}_{p-1,k+1}"{left}]&&&&A'dx^{i}_{k^{l}_i}\setminus j\arrow[lluu,"t^{p}_{p-1,j}"]
\arrow[rruu,"s^{p}_{p-1,j}"{left}]&&&&A'dx^{i}_{k^{l}_i}\setminus j\arrow[lluu,"s^{p}_{p-1,k+1}"]\\\\
&&&&A"dx^{i}_{k"^{l}_i}\setminus(j,k)\arrow[lllluu,"t^{p-1}_{p-2,j}"{left}]\arrow[uu,"s^{p-1}_{p-2,k}"]\arrow[rrrruu,"s^{p-1}_{p-2,j}"]
\end{tikzcd}$$

where we denote $A"dx^{i}_{k"^{l}_i}\setminus(j,k)=s^{p-1}_{p-2,k}(A'dx^{i}_{k^{l}_i}\setminus j)$.

\begin{remark}
Of course we have also : 
$t^{p-1}_{p-2,j}(A'dx^{i}_{k^{l}_i}\setminus j)=s^{p-1}_{p-2,k}(A'dx^{i}_{k^{l}_i}\setminus j)=s^{p-1}_{p-2,j}(A'dx^{i}_{k^{l}_i}\setminus j)$
but in $t^{p-1}_{p-2,j}(A'dx^{i}_{k^{l}_i}\setminus j)=t^{p-1}_{p-2,j}(A')dx^{i}_{k^{l}_i}\setminus(j,j)$ and 
$s^{p-1}_{p-2,j}(A'dx^{i}_{k^{l}_i}\setminus j)=s^{p-1}_{p-2,j}(A')dx^{i}_{k^{l}_i}\setminus(j,j)$ the basic divisors $A"$, 
$t^{p-1}_{p-2,j}(A')$ and $s^{p-1}_{p-2,j}(A')$ are not necessarily equals. 
\end{remark}

and thus the morphism of sketches $s^{p-1}_{p-2,k}$ sends $d$ to the following diagram $s^{p-1}_{p-2,k}(d)$ 
of the sketches $\mathcal{E}^{p-2}_{f_{p-2}(s^{p}_{p-1,j}(x))}$ and $\mathcal{E}^{p-2}_{f_{p-2}(t^{p}_{p-1,j}(x))}$ :

$$\begin{tikzcd}
 A'dx^{i}_{k^{l}_i}\setminus j&&A'dx^{i}_{k^{l}_i}\setminus j\\\\
&A"dx^{i}_{k"^{l}_i}\setminus (j,j)\arrow[luu,"t^{p-1}_{p-2,j}"]\arrow[ruu,"s^{p-1}_{p-2,j}"]
\end{tikzcd}$$

And we obtain $t^{p-1}_{p-2,k}(d)$ by using the diagram :
 
 $$\begin{tikzcd}
&&A_{(k^{l}_{1},\cdots,k^{l}_{j},\cdots,k^{l}_{p})}dx^{i}_{k^{l}_i}&&&&
A_{(k^{l'}_{1},\cdots,k^{l'}_{j},\cdots,k^{l'}_{p})}dx^{i}_{k^{l'}_i}\\\\
A'dx^{i}_{k^{l}_i}\setminus j\arrow[rruu,"t^{p}_{p-1,k+1}"{left}]&&&&A'dx^{i}_{k^{l}_i}\setminus j\arrow[lluu,"t^{p}_{p-1,j}"]
\arrow[rruu,"s^{p}_{p-1,j}"{left}]&&&&A'dx^{i}_{k^{l}_i}\setminus j\arrow[lluu,"t^{p}_{p-1,k+1}"]\\\\
&&&&A"dx^{i}_{k"^{l}_i}\setminus(j,k)\arrow[lllluu,"t^{p-1}_{p-2,j}"{left}]\arrow[uu,"t^{p-1}_{p-2,k}"]\arrow[rrrruu,"s^{p-1}_{p-2,j}"]
\end{tikzcd}$$ 

where we denote $A"dx^{i}_{k"^{l}_i}\setminus(j,k)=t^{p-1}_{p-2,k}(A'dx^{i}_{k^{l}_i}\setminus j)$,
and thus the morphism of sketches $t^{p-1}_{p-2,k}$ sends $d$ to the following diagram $t^{p-1}_{p-2,k}(d)$ 
of the sketches $\mathcal{E}^{p-2}_{f_{p-2}(s^{p}_{p-1,j}(x))}$ and $\mathcal{E}^{p-2}_{f_{p-2}(t^{p}_{p-1,j}(x))}$ :

$$\begin{tikzcd}
 A'dx^{i}_{k^{l}_i}\setminus j&&A'dx^{i}_{k^{l}_i}\setminus j\\\\
&A"dx^{i}_{k"^{l}_i}\setminus (j,j)\arrow[luu,"t^{p-1}_{p-2,j}"]\arrow[ruu,"s^{p-1}_{p-2,j}"]
\end{tikzcd}$$

\item The $1$-faces $x\in f^{1}(X)$ of $X$ are all of the form 
$x=A_1 dx^{1}_{k_1}+A_2 dx^{1}_{k_1+1}+\cdots+A_l dx^{1}_{k_1+l-1}+A_{l+1} dx^{1}_{k_1+l}\cdots+A_r dx^{1}_{k_1+r-1}$ 
where any basic divisor $A_l dx^{1}_{k_1+l-1}$ of $x$ can be $1(1) dx^{1}_{k_1+l-1}$ or $1^{0}_1(1(0)) dx^{1}_{k_1+l-1}$ and
the sketch $\mathcal{E}^{0}_{f_0(X)}$ is a set of diagrams of the form :

$$\begin{tikzcd}
 A_l dx^{1}_{k_1+l-1}&&A_{l+1} dx^{1}_{k_1+l}\\\\
&A\arrow[luu,"t^{1}_{0}"]\arrow[ruu,"s^{1}_{0}"]
\end{tikzcd}$$

where $A$ denotes the unique $0$-cell $1(0)$ of the cubical site $\mathbb{C}$.

\item It is interesting to notice that the sketch $\mathcal{E}_X$ can be seen also
as a $n$-cubical object in the category $\mathbb{S}\text{ketch}$ of sketches :
 
 $$\begin{tikzcd}
 \mathcal{E}^{n}_X \arrow[r, yshift=1.5ex,"s^{n}_{n-1,1}"]
  \arrow[r, yshift=-1.5ex,"t^{n}_{n-1,1}"{below}]
  \arrow[r, yshift=4.5ex,dotted] \arrow[r, yshift=-4.5ex,dotted]
  \arrow[r, yshift=7.5ex,"s^{n}_{n-1,j}"] 
  \arrow[r, yshift=-7.5ex,"t^{n}_{n-1,j}"{below}]
  \arrow[r, yshift=10.5ex,dotted] 
  \arrow[r, yshift=-10.5ex,dotted]      
\arrow[r, yshift=13.5ex,"s^{n}_{n-1,n}"]
\arrow[r, yshift=-13.5ex,"t^{n}_{n-1,n}"{below}]  
& \mathcal{E}^{n-1}_{X} 
 \cdots \mathcal{E}^{4}_{X} \arrow[r, yshift=1.5ex,"s^{4}_{3,1}"]
  \arrow[r, yshift=-1.5ex,"t^{4}_{3,1}"{below}]
  \arrow[r, yshift=4.5ex,"s^{4}_{3,2}"] \arrow[r, yshift=-4.5ex,"t^{4}_{3,2}"{below}]
  \arrow[r, yshift=7.5ex,"s^{4}_{3,3}"] 
  \arrow[r, yshift=-7.5ex,"t^{4}_{3,3}"{below}]
  \arrow[r, yshift=10.5ex,"s^{4}_{3,4}"] 
  \arrow[r, yshift=-10.5ex,"t^{4}_{3,4}"{below}]      & 
  \mathcal{E}^{3}_{X} \arrow[r, yshift=1.5ex,"s^{3}_{2,1}"]
  \arrow[r, yshift=-1.5ex,"t^{3}_{2,1}"{below}]
  \arrow[r, yshift=4.5ex,"s^{3}_{2,2}"] 
  \arrow[r, yshift=-4.5ex,"t^{3}_{2,2}"{below}]
  \arrow[r, yshift=7.5ex,"s^{3}_{2,3}"] 
  \arrow[r, yshift=-7.5ex,"t^{3}_{2,3}"{below}]
      & \mathcal{E}^{2}_{X} 
      \arrow[r, yshift=1.5ex,"s^{2}_{1,1}"]
      \arrow[r, yshift=-1.5ex,"t^{2}_{1,1}"{below}]
      \arrow[r, yshift=4.5ex,"s^{2}_{1,2}"] 
      \arrow[r, yshift=-4.5ex,"t^{2}_{1,2}"{below}]
      & \mathcal{E}^{1}_{X} \arrow[r, yshift=1.5ex,"s^{1}_{0}"]\arrow[r, yshift=-1.5ex,"t^{1}_{0}"{below}]   
        & \mathcal{E}^{0}_{X}
 \end{tikzcd}$$
 
 where we put : $\mathcal{E}^{p}_X:=\underset{x\in f_p(X)}\bigcup\mathcal{E}^{p}_x$.
 
\item
\begin{definition}
If $\mathcal{E}_X$ is the sketch associated to a divisor $X$ then it has a straightforward structure of
cubical set given by different faces of $X$, and also it has a straightforward structure of reflexive
cubical set given by : 
$$1^{n}_{n+1,j}(\mathcal{E}_{X}):=\mathcal{E}_{1^{n}_{n+1,j}(X)}\text{ for }j\in\llbracket 1,n+1\rrbracket,
\text{ and }1^{n,\gamma}_{n+1,j}(\mathcal{E}_{X}):=\mathcal{E}_{1^{n,\gamma}_{n+1,j}(X)}\text{ for }j\in\llbracket 1,n\rrbracket$$ 
\end{definition} 

\item 

\begin{definition}
A \textit{rectangular $n$-sketch} are the one of the form $\mathcal{E}_X$ where $X$ is a rectangular $n$-divisor.
\end{definition}

Because rectangular divisors have two notions of sources and targets : 
\begin{itemize}
\item the $j$-sources and $j$-targets
($j\in\mathbb{N}$ is a direction)
which are useful to build associated sketches, 

\item the $j$-pasting sources and $j$-pasting targets 
which are useful to put a cubical strict $\infty$-categorical structure with connections on rectangular
divisors 
\end{itemize}

their associated rectangular sketches inherit also two notions of sources-targets :

\begin{definition}
If $X$ is a rectangular $n$-divisor and $\mathcal{E}_{X}$ is its associated sketch, then we define :
$$\sigma^{n}_{n-1,j}(\mathcal{E}_{X}):=\mathcal{E}_{\sigma^{n}_{n-1,j}(X)}\text{ and }
\tau^{n}_{n-1,j}(\mathcal{E}_{X}):=\mathcal{E}_{\tau^{n}_{n-1,j}(X)}$$
\end{definition}

\begin{proposition}
The set of sketches associated to rectangular divisors is equipped with a structure of cubical strict $\infty$-category with connections
that we denote by $\mathbb{C}\text{-}\mathbb{P'}\text{ast}$
\end{proposition}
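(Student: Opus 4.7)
The plan is to transport the cubical strict $\infty$-category structure that the previous proposition establishes on $\mathbb{C}\text{-}\mathbb{P}\text{ast}$ along the canonical assignment $X \mapsto \mathcal{E}_X$, and then check that this assignment is in fact a bijection compatible with all the structural operations. The first step is to observe that the construction of $\mathcal{E}_X$ is faithful: the cubical data of $\mathcal{E}_X$ (its punctual top cell, together with the gluing-locus cocones and the free-source/free-target arrows at every level) records precisely the basic divisors appearing in $X$ and the way they fit together, so up to the translation equivalence under which divisors are considered the sketch $\mathcal{E}_X$ determines $X$. This gives a bijection between rectangular $n$-divisors and rectangular $n$-sketches.

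Next I would define each operation on rectangular sketches by transport along this bijection. For sources, targets, and degeneracies the definitions have already been written down earlier in this section: $\sigma^{n}_{n-1,j}(\mathcal{E}_X) := \mathcal{E}_{\sigma^{n}_{n-1,j}(X)}$, $\tau^{n}_{n-1,j}(\mathcal{E}_X) := \mathcal{E}_{\tau^{n}_{n-1,j}(X)}$, $1^{n}_{n+1,j}(\mathcal{E}_X) := \mathcal{E}_{1^{n}_{n+1,j}(X)}$, and $1^{n,\gamma}_{n+1,j}(\mathcal{E}_X) := \mathcal{E}_{1^{n,\gamma}_{n+1,j}(X)}$. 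For partial compositions I would set $\mathcal{E}_{X'} \circ^{n}_{n-1,j} \mathcal{E}_X := \mathcal{E}_{X + X'}$ whenever $\tau^{n}_{n-1,j}(\mathcal{E}_X) = \sigma^{n}_{n-1,j}(\mathcal{E}_{X'})$, using the composition $X' \circ^{n}_{n-1,j} X = X + X'$ on rectangular divisors produced by the previous proposition.

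The cubical strict $\infty$-categorical axioms with connections — the cubical identities relating sources, targets and degeneracies, the associativity of each $\circ^{n}_{n-1,j}$, the middle-four interchange, and the interaction of $\circ^{n}_{n-1,j}$ with $1^{n}_{n+1,j}$ and $1^{n,\gamma}_{n+1,j}$ (the transport laws written out explicitly in \ref{connected-divisors}) — then follow automatically from the corresponding identities already verified on $\mathbb{C}\text{-}\mathbb{P}\text{ast}$, because by construction each operation on rectangular sketches is the conjugate of the corresponding operation on rectangular divisors by the bijection $X \mapsto \mathcal{E}_X$. The resulting isomorphism $\mathbb{C}\text{-}\mathbb{P}\text{ast} \simeq \mathbb{C}\text{-}\mathbb{P'}\text{ast}$ is canonical.

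The main obstacle I anticipate is verifying that the transported composition is well-defined purely at the sketch level: one must check that the condition $\tau^{n}_{n-1,j}(\mathcal{E}_X) = \sigma^{n}_{n-1,j}(\mathcal{E}_{X'})$ really does encode the matching of $j$-pasting faces of the underlying divisors (so that $X + X'$ is again a rectangular divisor and not merely a disjoint sum), and that the sketch $\mathcal{E}_{X+X'}$ is recoverable from the sketch-level gluing data obtained by identifying $\tau^{n}_{n-1,j}(\mathcal{E}_X)$ with $\sigma^{n}_{n-1,j}(\mathcal{E}_{X'})$. Both statements reduce to inspecting the basic gluing locus cocones and the free source/target data described earlier in this section, so the technical work is to confirm that these sketch-level features are in natural bijection with the corresponding divisor-level features. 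Once that is in place, the structure transport is forced and the proposition follows.
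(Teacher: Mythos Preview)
Your proposal is correct and takes essentially the same approach as the paper: define the cells, compositions, and degeneracies on rectangular sketches by transport along $X \mapsto \mathcal{E}_X$, so that $\mathcal{E}_X \circ^{n}_{n-1,j} \mathcal{E}_{X'} := \mathcal{E}_{X \circ^{n}_{n-1,j} X'}$ and the degeneracies are those already introduced. The paper's proof is in fact terser than yours---it simply lists these definitions without explicitly invoking the bijection or discussing well-definedness---so your additional care about whether the sketch-level matching condition corresponds to the divisor-level one is more justification than the paper provides, not less.
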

\label{rectangle-sketch}

\begin{proof}
\begin{itemize}
\item For all integer $n\in\mathbb{N}$, $n$-cells of $\mathbb{C}\text{-}\mathbb{P'}\text{ast}$ are 
sketches $\mathcal{E}_X$ where $X$ is a rectangular $n$-divisor.

\item Consider two sketches $\mathcal{E}_X$ and $\mathcal{E}_{X'}$ such that 
$\tau^{n}_{n-1,j}(\mathcal{E}_X)=\sigma^{n}_{n-1,j}(\mathcal{E}_{X'})$. Then we define :
$$\mathcal{E}_X\circ^{n}_{n-1,j}\mathcal{E}_{X'}:=\mathcal{E}_{X\circ^{n}_{n-1,j}X'}$$

\item If $X$ is a rectangular $n$-divisor and $\mathcal{E}_{X}$ is its associated sketch, then we defined above degeneracies 
$1^{n}_{n+1,j}(\mathcal{E}_{X})$ for $j\in\llbracket 1,n+1\rrbracket$ and
$1^{n,-}_{n+1,j}(\mathcal{E}_{X})$, 
$1^{n,+}_{n+1,j}(\mathcal{E}_{X})$ for $j\in\llbracket 1,n\rrbracket$.
\end{itemize}
\end{proof}

We thus have another description of the cubical $\Theta_0$ which is the full subcategory $\Theta_0\subset{\CS}$ which
objects are rectangular sketches.

\end{itemize}

\subsection{The monad of cubical strict $\infty$-categories with connections}
\label{proof}

Consider a rectangular $n$-divisor 
$X=A_{(k^{1}_{1},\cdots,k^{1}_{n})}dx^{i}_{k^{1}_i}+\cdots+
A_{(k^{l}_{1},\cdots,k^{l}_{n})}dx^{i}_{k^{l}_i}+\cdots+A_{(k^{r}_{1},
\cdots,k^{r}_{n})}dx^{i}_{k^{r(X)}_i}$ and a cubical set $C\in\CS$. 

A \textit{decoration} of $X$ by cells of 
$C$ is given by a $C$-\textit{decorated rectangular $n$-divisor} :
$$\langle X,C\rangle=c_{(k^{1}_{1},\cdots,k^{1}_{n})}dx^{i}_{k^{1}_i}+\cdots+
c_{(k^{l}_{1},\cdots,k^{l}_{n})}dx^{i}_{k^{l}_i}+\cdots+c_{(k^{r}_{1},
\cdots,k^{r}_{n})}dx^{i}_{k^{r(X)}_i}$$
i.e a filling of $X$ with cells $c_{(k^{l}_{1},\cdots,k^{l}_{n})}$ of $\mathbb{R}(C)$ (i.e we substitute 
the $1(q)$'s in each basic divisors of $X$ which are formally degenerate or not with the $q$-cells 
of $C$) 
such that for all directions $j\in\llbracket 1,n\rrbracket$
if  $(A_{(k^{l}_{1},\cdots,k^{l}_{n})}dx^{i}_{k^{l}_i},
A_{(k^{l'}_{1},\cdots,k^{l'}_{n})}dx^{i}_{k^{l'}_i})$ are $j$-gluing datas for $X$, i.e are such that 
$t^{n}_{n-1,j}(A_{(k^{l}_{1},\cdots,k^{l}_{n})}dx^{i}_{k^{l}_i})=s^{n}_{n-1,j}(A_{(k^{l'}_{1},\cdots,k^{l'}_{n})}dx^{i}_{k^{l'}_i})$
then :
$$t^{n}_{n-1,j}(c_{(k^{l}_{1},\cdots,k^{l}_{n})}dx^{i}_{k^{l}_i})=s^{n}_{n-1,j}(c_{(k^{l'}_{1},\cdots,k^{l'}_{n})}dx^{i}_{k^{l'}_i})$$
The set of decorations of $X$ by cells of $C$ is denoted $\mathbb{D}\text{ecor}(X,C)$. The colimit 
$\text{colim}\mathcal{E}_{\langle X,C\rangle}$ is taken in $\E$ and gives an $n$-cell of the free cubical strict $\infty$-category 
$S(C)$ with connections. 

We have another description of these $n$-cells $\text{colim}\mathcal{E}_{\langle X,C\rangle}$ 
by using gluing of representables. But first let us define what are \textit{cubical sums} : consider a functor 
\begin{tikzcd}
\mathbb{C}\arrow[rr,"F"]&&\mathcal{C}
\end{tikzcd}
where we denote $F(1(n))=I^{n}$ and $F(s^{n}_{n-1,j})=\bf{s}^{n}_{n-1,j}$, and $F(t^{n}_{n-1,j})=\bf{t}^{n}_{n-1,j}$. 
The $F$-\textit{decorated rectangular $n$-divisor} :
$$\langle X,F\rangle=c_{(k^{1}_{1},\cdots,k^{1}_{n})}dx^{i}_{k^{1}_i}+\cdots+
c_{(k^{l}_{1},\cdots,k^{l}_{n})}dx^{i}_{k^{l}_i}+\cdots+c_{(k^{r}_{1},
\cdots,k^{r}_{n})}dx^{i}_{k^{r(X)}_i}$$
is a filling of $X$ by the $n$-cells $I^n$ of $\mathcal{C}$ in the sense that in each occurrence of the $1(n)$'s
in the degenerate boxes of $X$, we substitute $1(n)$ by $I^n$. Here this is important to notice that
the expressions $c_{(k^{l}_{1},\cdots,k^{l}_{n})}dx^{i}_{k^{l}_i}$ that we obtain are formal degenerate
terms build with the objects $I^n$ of $\mathcal{C}$. Also there is only a unique $F$-decoration $\langle X,F\rangle$ of $F$
for each rectangular $n$-divisor $X$.

We associate to $\langle X,F\rangle$
the sketch $\mathcal{E}_{\langle X,F\rangle}$ as in \ref{sketches}, and in fact this is just the realization of the sketch $\mathcal{E}_X$ by $F$, i.e all formal cosources-cotargets $s^{p}_{p-1,j}, t^{p}_{p-1,j}$ of $\mathcal{E}_X$ are sent to $\bf s^{p}_{p-1,j}, t^{p}_{p-1,j}$ 
by $F$, and all formal degenerate terms $c_{(k^{l}_{1},\cdots,k^{l}_{n})}dx^{i}_{k^{l}_i}$ in $\langle X,F\rangle$
must be well realized in $\mathcal{C}$. At this point this is interesting to notice that such realizations are possible in 
any category $\mathcal{C}$ of presheaves, because in any category of presheaves we can build degenerates
terms (like the one of the left adjoint of the forgetful functor $U$ in \ref{degenerate-boxes}).

If the colimit $\text{colim} \mathcal{E}_{\langle X,F\rangle}$ exists in $\mathcal{C}$ then we say that it has 
the cubical sum associated to the $F$-decoration $\langle X,F\rangle$, or it has $X$-\textit{cubical sum} for short. 
If the $X$-cubical sum exists in $\mathcal{C}$
for all such decorations $\langle X,F\rangle$ and for all rectangular $n$-divisor $X$ then we say that $F$ is a 
\textit{cubical extension} or has all \textit{cubical sums}. 
A morphism of cubical extensions is given by a commutative triangle

$$\begin{tikzcd}
&&\mathcal{C}\arrow[dd,"f"]\\
\mathbb{C}\arrow[rru,"F"]\arrow[rrd,"F'"{left}]\\
&&\mathcal{C}'
\end{tikzcd}$$
such that the functor $f$ preserves cubical sums, i.e we have 
$f(\text{colim} \mathcal{E}_{\langle X,F\rangle})=\text{colim} \mathcal{E}_{\langle X,F'\rangle}$
for each $F$-decoration $\langle X,F\rangle$ of each rectangular divisor $X$. We denote by
$\mathbb{C}\text{-}\mathbb{E}\text{xt}$ the category of cubical extensions. It is easy to see that the
functor $i$ :
$$\begin{tikzcd}
\mathbb{C}\arrow[rr,"i"]&&\Theta_0
\end{tikzcd}$$
which sends each objects $1(n)$ of $\mathbb{C}$ to the basic box $1(n)dx^{i}_{k_i}$ is an
initial object of $\mathbb{C}\text{-}\mathbb{E}\text{xt}$, 
such that the unique map is given by the functor 
$$\begin{tikzcd}
&&\Theta_0\arrow[dd,"\text{colim}\mathcal{E}_{\langle -,F\rangle}"]\\
\mathbb{C}\arrow[rru,"i"]\arrow[rrd,"F"{left}]\\
&&\mathcal{C}
\end{tikzcd}$$

where $\Theta_0$ is here seen as the category of rectangular $n$-divisors (for
each $n\in\mathbb{N}$) \ref{teta-divisors}, and thus $\Theta_0$ inherits a universal property.

The Yoneda embedding :
$$\begin{tikzcd}
\mathbb{C}\arrow[rr,"\text{Y}"]&&\CS\\
1(n)\arrow[rr,mapsto]&&\text{hom}_{\CS}(1(n),-)
\end{tikzcd}$$
is such cubical extension because $\CS$ has all small colimits.
The full image $\text{Y}(\mathbb{C})$ of $\text{Y}$ is a cubical set builds with representables of the
pre-cubical site $\mathbb{C}$, and we put $\text{Y}(1(n))=\text{hom}_{\CS}(1(n),-)$ for all integer
$n\in\mathbb{N}$, and $\text{Y}(s^{n}_{n-1,j})=\bf{s}^{n}_{n-1,j}$, and $\text{Y}(t^{n}_{n-1,j})=\bf{t}^{n}_{n-1,j}$. 
A decoration $\langle X,\text{Y}(\mathbb{C})\rangle$ of $X$ by cells of $\text{Y}(\mathbb{C})$ has here the
same meaning as a $\text{Y}$-decorated rectangular $n$-divisor 
$\langle X,\text{Y}\rangle$, i.e it is given by the $\text{Y}(\mathbb{C})$-decorated rectangular $n$-divisor
$$\langle X,\text{Y}(\mathbb{C})\rangle=\langle X,\text{Y}\rangle=c_{(k^{1}_{1},\cdots,k^{1}_{n})}dx^{i}_{k^{1}_i}+\cdots+
c_{(k^{l}_{1},\cdots,k^{l}_{n})}dx^{i}_{k^{l}_i}+\cdots+c_{(k^{r}_{1},
\cdots,k^{r}_{n})}dx^{i}_{k^{r(X)}_i}$$
i.e we substitute each $1(p)$'s of $X$ with the representable $\text{hom}_{\CS}(1(p),-)$ for $p\leq n$. 

The colimit $\text{colim} \mathcal{E}_{\langle X,\text{Y}\rangle}$ (in $\CS$) is in fact a cubical set builds by gluing representables
along their sources-targets : more precisely we have for all directions $j\in\llbracket 1,n\rrbracket$, 
$s^{n}_{n-1,j}(\text{colim} \mathcal{E}_{\langle X,\text{Y}\rangle})=\text{colim} \mathcal{E}_{\langle s^{n}_{n-1,j}(X),\text{Y}\rangle}$, 
$t^{n}_{n-1,j}(\text{colim} \mathcal{E}_{\langle X,\text{Y}\rangle})=\text{colim} \mathcal{E}_{\langle t^{n}_{n-1,j}(X),\text{Y}\rangle}$, 
and any zigzag of sources-targets of $\text{colim} \mathcal{E}_{\langle X,\text{Y}\rangle}$ is equal to the colimit
$\text{colim} \mathcal{E}_{\langle x,\text{Y}\rangle}$ where $x$ is the face of $X$ obtained by this zigzag.
Now we are ready to describe the monad $\mathbb{S}=(S,\lambda,\mu)$ of cubical strict $\infty$-categories 
with connections : as we wrote in \cite{cam-cubique} the forgetful functor : 

$$\begin{tikzcd}
\CC\arrow[rr,"U"]&&\left[\mathbb{C}^{op},\mathbb{S}\text{ets}\right]
\end{tikzcd}$$

which sends cubical strict $\infty$-categories with connections to
cubical sets is right adjoint and its induced monad is written $\mathbb{S}=(S,\lambda,\mu)$ where 
\begin{tikzcd}
1_{\CS}\arrow[rr,"\lambda"]&&S
\end{tikzcd}
 is its unit and 
\begin{tikzcd}
S^2\arrow[rr,"\mu"]&&S
\end{tikzcd}
is its multiplication. 

If $C\in\CS$ is a cubical set, then we put :

$$S(C):=\underset{X\in\Theta_0}\bigcup{\text{colim}\mathcal{E}_{\langle X,C\rangle}}=\underset{X\in\Theta_0}
\bigcup{\text{hom}_{\CS}(\text{colim} \mathcal{E}_{\langle X,\text{Y}\rangle},C)}$$

This description shows immediately that $S$ preserved fiber products and $S(C)$ is a cubical strict 
$\infty$-categories with connections.

The unit $\lambda$ of $\mathbb{S}$ is given by the map :
$$\begin{tikzcd}
C\arrow[rr,"\lambda"]&&S(C)\\
c\arrow[rr,mapsto]&&c
\end{tikzcd}$$

The multiplication $\mu$ of $\mathbb{S}$ :
$$\begin{tikzcd}
S^2(C)\arrow[rr,"\mu"]&&S(C)
\end{tikzcd}$$
is more subtle and need more work. Consider a decoration of $X$ by cells of 
$\mathbb{C}\text{-}\mathbb{P}\text{ast}$ i.e we start with a $\mathbb{C}\text{-}\mathbb{P}\text{ast}$-decorated rectangular $n$-divisor :
$$\langle X,\mathbb{C}\text{-}\mathbb{P}\text{ast}\rangle=X_{(k^{1}_{1},\cdots,k^{1}_{n})}dx^{i}_{k^{1}_i}+\cdots+
X_{(k^{l}_{1},\cdots,k^{l}_{n})}dx^{i}_{k^{l}_i}+\cdots+X_{(k^{r}_{1},
\cdots,k^{r}_{n})}dx^{i}_{k^{r(X)}_i}$$
i.e a filling of $X$ with cells $X_{(k^{l}_{1},\cdots,k^{l}_{n})}$ of $\mathbb{C}\text{-}\mathbb{P}\text{ast}$ 
such that for all directions $j\in\llbracket 1,n\rrbracket$
if  $(A_{(k^{l}_{1},\cdots,k^{l}_{n})}dx^{i}_{k^{l}_i},
A_{(k^{l'}_{1},\cdots,k^{l'}_{n})}dx^{i}_{k^{l'}_i})$ are $j$-gluing datas for $X$, 
then :
$$\tau^{n}_{n-1,j}(X_{(k^{l}_{1},\cdots,k^{l}_{n})}dx^{i}_{k^{l}_i})=
\sigma^{n}_{n-1,j}(X_{(k^{l'}_{1},\cdots,k^{l'}_{n})}dx^{i}_{k^{l'}_i})$$
Here sources $\sigma^{n}_{n-1,j}$ and targets $\tau^{n}_{n-1,j}$ are the pasting-sources
and the pasting-targets for rectangular divisors.
We obtain then a new rectangular $n$-divisor denoted $X_{\langle X,\mathbb{C}\text{-}\mathbb{P}\text{ast}\rangle}$ that
we obtain by reindexing all coordinates of basic divisors inside each 
rectangular $n$-divisors $X_{(k^{l}_{1},\cdots,k^{l}_{n})}$. An important fact is
the sketch $\mathcal{E}_{X_{\langle X,\mathbb{C}\text{-}\mathbb{P}\text{ast}\rangle}}$
is obtained by using the realization of the $s^{n}_{n-1,j}$ to the $\sigma^{n}_{n-1,j}$ and
the realization of the $t^{n}_{n-1,j}$ to the $\tau^{n}_{n-1,j}$ : this is here that we see the interplay
between sources-targets of a rectangular sketch and pasting-sources and pasting-targets
between rectangular sketches. This 
provides on $\mathbb{C}\text{-}\mathbb{P}\text{ast}$ an operation that
we call \textit{the substitution} of $\mathbb{C}\text{-}\mathbb{P}\text{ast}$.
The multiplication $\mu$ of $\mathbb{S}$ :
$$\begin{tikzcd}
S^2(C)\arrow[rr,"\mu"]&&S(C)
\end{tikzcd}$$
is then given by :

$$\begin{tikzcd}
\underset{X\in\Theta_0}
\bigcup{\text{hom}_{\CS}(\text{colim} \mathcal{E}_{\langle X,\text{Y}\rangle},S(C))}\arrow[rr,"\mu(C)"]&&\underset{X\in\Theta_0}
\bigcup{\text{hom}_{\CS}(\text{colim} \mathcal{E}_{\langle X,\text{Y}\rangle},C)}
\end{tikzcd}$$
where

$$\underset{X\in\Theta_0}
\bigcup{\text{hom}_{\CS}(\text{colim} \mathcal{E}_{\langle X,\text{Y}\rangle},S(C))}$$
is given by :

$$\underset{X\in\Theta_0}
\bigcup{\text{hom}_{\CS}\Big(\text{colim} \mathcal{E}_{\langle X,\text{Y}\rangle},\underset{X\in\Theta_0}
\bigcup{\text{hom}_{\CS}(\text{colim} \mathcal{E}_{\langle X,\text{Y}\rangle},C)}\Big)}$$

An element of $S^{2}(C)$ is described as follow : it underlies a decoration of $X$ by $C$-decorated rectangular $n$-divisors
$\langle X_{(k^{l}_{1},\cdots,k^{l}_{n})},C\rangle$, and we write such decoration :

$$\langle X,\mathbb{C}\text{-}\mathbb{P}\text{ast},C\rangle=
\langle X_{(k^{1}_{1},\cdots,k^{1}_{n})},C\rangle dx^{i}_{k^{1}_i}+\cdots+
\langle X_{(k^{l}_{1},\cdots,k^{l}_{n})},C\rangle dx^{i}_{k^{l}_i}+\cdots+
\langle X_{(k^{r}_{1},\cdots,k^{r}_{n})},C\rangle dx^{i}_{k^{r(X)}_i}$$

which itself underlies the $\mathbb{C}\text{-}\mathbb{P}\text{ast}$-decorated rectangular $n$-divisor :

$$\langle X,\mathbb{C}\text{-}\mathbb{P}\text{ast}\rangle=X_{(k^{1}_{1},\cdots,k^{1}_{n})}dx^{i}_{k^{1}_i}+\cdots+
X_{(k^{l}_{1},\cdots,k^{l}_{n})}dx^{i}_{k^{l}_i}+\cdots+X_{(k^{r}_{1},
\cdots,k^{r}_{n})}dx^{i}_{k^{r(X)}_i}$$

In the decoration $\langle X,\mathbb{C}\text{-}\mathbb{P}\text{ast},C\rangle$
we substitute each $\langle X_{(k^{l}_{1},\cdots,k^{l}_{n})},C\rangle$
by the $n$-cells $\text{colim}\mathcal{E}_{\langle X_{(k^{l}_{1},\cdots,k^{l}_{n})},C\rangle}$ of $S(C)$, 
which are also described as maps between the gluing of representables 
$\text{colim} \mathcal{E}_{\langle X_{(k^{l}_{1},\cdots,k^{l}_{n})},\text{Y}\rangle}$
to $C$. Thus we obtain a decoration of a rectangular $n$-divisor
$X$ by $n$-cells of $S(C)$ that we write :

$$[X,\mathbb{C}\text{-}\mathbb{P}\text{ast},C]=
\text{colim}(\langle X_{(k^{1}_{1},\cdots,k^{1}_{n})},C\rangle) dx^{i}_{k^{1}_i}+\cdots+
\text{colim}(\langle X_{(k^{l}_{1},\cdots,k^{l}_{n})},C\rangle dx^{i}_{k^{l}_i})+\cdots+
\text{colim}(\langle X_{(k^{r}_{1},\cdots,k^{r}_{n})},C\rangle dx^{i}_{k^{r(X)}_i})$$

An element of $S^2(C)$ is then given by $\text{colim}\mathcal{E}_{[X,\mathbb{C}\text{-}\mathbb{P}\text{ast},C]}$.
Thus elements of $S^2(C)$ are described by a colimit of colimits of $n$-cells in $S(C)$. But colimit of colimits of
cocones is the colimit of a cocone obtained by gluing all cocones together, and we can already guess that 
the multiplication $\mu(C)$ uses this simple fact : we use $X_{\langle X,\mathbb{C}\text{-}\mathbb{P}\text{ast}\rangle}$, 
the substitution associated to the $\mathbb{C}\text{-}\mathbb{P}\text{ast}$-decorated rectangular $n$-divisor :

$$\langle X,\mathbb{C}\text{-}\mathbb{P}\text{ast}\rangle=X_{(k^{1}_{1},\cdots,k^{1}_{n})}dx^{i}_{k^{1}_i}+\cdots+
X_{(k^{l}_{1},\cdots,k^{l}_{n})}dx^{i}_{k^{l}_i}+\cdots+X_{(k^{r}_{1},
\cdots,k^{r}_{n})}dx^{i}_{k^{r(X)}_i}$$

in order to glue all cocones of the sketches $\mathcal{E}_X$ and $\mathcal{E}_{X_{(k^{l}_{1},\cdots,k^{l}_{n})}}$
($l\in\llbracket 1,r(X)\rrbracket$) and to obtain cocones of $\mathcal{E}_{X_{\langle X,\mathbb{C}\text{-}\mathbb{P}\text{ast}\rangle}}$.

Thus we have the following definition of $\mu(C)$ :

$$\begin{tikzcd}
S^2(C)\arrow[rr,"\mu(C)"]&&S(C)
\end{tikzcd}$$

$\mu(C)$ sends $\text{colim}\mathcal{E}_{[X,\mathbb{C}\text{-}\mathbb{P}\text{ast},C]}$
to $\text{colim}\mathcal{E}_{\langle X_{\langle X,\mathbb{C}\text{-}\mathbb{P}\text{ast}\rangle},C\rangle}$
which is the same thing as to say that it sends $\text{colim}\mathcal{E}_{[X,\mathbb{C}\text{-}\mathbb{P}\text{ast},C]}$
to a map between the gluing of representables 
$\text{colim}\mathcal{E}_{\langle X_{\langle X,\mathbb{C}\text{-}\mathbb{P}\text{ast}\rangle},\text{Y}\rangle}$ to
$C$.

\begin{theorem}
The monad $\mathbb{S}=(S,\lambda,\mu)$ acting on $\CS$ which algebras are cubical strict $\infty$-categories with connections
(described in \cite{cam-cubique,camark-cub-1}) is cartesian.
\end{theorem}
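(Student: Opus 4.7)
The plan is to verify the three conditions for cartesianity: $S$ preserves pullbacks, and both the unit $\lambda$ and the multiplication $\mu$ are cartesian natural transformations. As already observed in the construction of $S(C)=\bigcup_{X\in\Theta_{0}}\text{hom}_{\CS}(\text{colim}\,\mathcal{E}_{\langle X,\text{Y}\rangle},C)$, the functor $S$ preserves fiber products: decorations over a fixed shape $X$ are just $\CS$-morphisms out of the representing cubical set $\text{colim}\,\mathcal{E}_{\langle X,\text{Y}\rangle}$, and since $\text{hom}_{\CS}$ preserves limits in the second variable and the index set $\Theta_{0}$ is untouched by any $S(h)$, the pullback of a commutative square in $\CS$ decomposes index-wise into the pullback of the hom-sets.

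For cartesianity of $\lambda$, I would consider a commutative square
\[
\begin{tikzcd}
D \ar[r,"g"] \ar[d,"f"'] & S(C) \ar[d,"S(h)"] \\
C' \ar[r,"\lambda(C')"'] & S(C')
\end{tikzcd}
\]
over any $h:C\to C'$. The image of $\lambda(C')$ consists precisely of those $n$-cells sitting in the trivial rectangular $n$-divisor $1(n)dx^{i}_{k_{i}}$ with a single filling. Commutativity therefore forces every $g(d)$ to be of this trivial-divisor form, labelled by a unique $c\in C_{n}$ with $h(c)=f(d)$, giving the required unique lift $D\to C$. This is essentially the argument used in Proposition \ref{monad-R} for the unit of $\mathbb{R}$, lifted to the richer shapes indexed by $\Theta_{0}$.

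The hard part is cartesianity of $\mu$. I would spell out the square
\[
\begin{tikzcd}
S^{2}(C) \ar[r,"S^{2}(h)"] \ar[d,"\mu(C)"'] & S^{2}(C') \ar[d,"\mu(C')"] \\
S(C) \ar[r,"S(h)"'] & S(C')
\end{tikzcd}
\]
using the explicit description. An $n$-cell of $S^{2}(C)$ is encoded by a pair: an outer shape $X\in\Theta_{0}$ together with a $\mathbb{C}\text{-}\mathbb{P}\text{ast}$-decoration $\langle X,\mathbb{C}\text{-}\mathbb{P}\text{ast}\rangle$ prescribing inner shapes $X_{(k^{l}_{1},\cdots,k^{l}_{n})}$, together with a compatible family of $C$-decorations $\langle X_{(k^{l}_{1},\cdots,k^{l}_{n})},C\rangle$ of each inner shape; its image under $\mu(C)$ is the single $C$-decoration $\langle X_{\langle X,\mathbb{C}\text{-}\mathbb{P}\text{ast}\rangle},C\rangle$ obtained after substitution. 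Given compatible elements $\xi\in S(C)$ and $\Xi'\in S^{2}(C')$ mapping to a common cell of $S(C')$, I would use the shapes $(X,X_{(k^{l}_{1},\cdots,k^{l}_{n})})$ read off from $\Xi'$ to carve up the global $C$-decoration underlying $\xi$ along the substitution $X_{\langle X,\mathbb{C}\text{-}\mathbb{P}\text{ast}\rangle}$; this produces inner $C$-decorations whose $h$-images are forced to match those of $\Xi'$, hence are uniquely determined cell-by-cell.

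The main obstacle is verifying that this carving is well defined and actually produces a $\mathbb{C}\text{-}\mathbb{P}\text{ast}$-decoration of $X$: one must check that the gluing conditions at each $j$-gluing datum of $X$ — expressed via the pasting-sources $\sigma^{n}_{n-1,j}$ and pasting-targets $\tau^{n}_{n-1,j}$ of the inner divisors — are automatic from the compatibility of $\xi$ and $\Xi'$, and that the resulting inner decorations are exactly those whose images under $h$ recover the $C'$-valued pieces of $\Xi'$. This reduces to the rigidity of the coordinate system $\mathcal{Z}_{n}$: within a rectangular divisor each basic divisor occupies a uniquely determined lattice position, so the interplay between ordinary sources-targets of a rectangular sketch and the pasting-sources and pasting-targets used to define $\mu$ introduces no ambiguous identifications. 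The remaining bookkeeping — propagating the indices $(k^{l}_{1},\cdots,k^{l}_{n})$ through the two levels of colimits $\text{colim}\,\mathcal{E}_{[X,\mathbb{C}\text{-}\mathbb{P}\text{ast},C]}$ and $\text{colim}\,\mathcal{E}_{\langle X_{\langle X,\mathbb{C}\text{-}\mathbb{P}\text{ast}\rangle},C\rangle}$ — is where the proof has to be carried out carefully.
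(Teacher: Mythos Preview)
Your approach is correct and rests on the same core idea as the paper: the shape data (the rectangular divisor $X$, and for $\mu$ the nested pair $(X,\{X_{(k^{l}_{1},\cdots,k^{l}_{n})}\})$) is preserved by the functorial action, so the lift is obtained by redistributing the $C$-decoration along the shapes read off from the other leg of the square. The one genuine difference is that the paper does not check the naturality squares over an arbitrary $h:C\to C'$ as you do; instead it uses the standard reduction (valid once $S$ preserves pullbacks) to the squares over $!:C\to 1$. This buys a real simplification: $S(1)$ is just the set of rectangular divisors and $S^{2}(1)$ is the set of $\mathbb{C}\text{-}\mathbb{P}\text{ast}$-decorated divisors, so commutativity immediately forces $X'=X_{\langle X,\mathbb{C}\text{-}\mathbb{P}\text{ast}\rangle}$ and the ``carving'' you describe becomes automatic, with no need to track an auxiliary $h$ or to argue separately that the inner $C$-decorations match those of $\Xi'$ after applying $h$. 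Your route works, but the terminal-object reduction strips away exactly the bookkeeping you flag as the hard part.
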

\label{monad-S}

\begin{proof}
The description of the monad $\mathbb{S}=(S,\lambda,\mu)$ above shows that 
its underlying endofunctor $S$ does preserve fibred products.
We are going to prove that the multiplication $\mu$ is cartesian, i.e we are going to prove
that if $C\in\CS$ is a cubical set then the commutative diagram :

$$\begin{tikzcd}
S^{2}(C)\arrow[dd,"\mu(C)"{left}]\arrow[rr,"S^{2}(!)"]&&S^{2}(1)\arrow[dd,"\mu(1)"]\\\\
S(C)\arrow[rr,"S(!)"{below}]&&S(1)
\end{tikzcd}$$

is a cartesian square. Consider the commutative diagram in $\CS$ :
$$\begin{tikzcd}
C'\arrow[dd,"f"{left}]\arrow[rr,"g"]&&S^{2}(1)\arrow[dd,"\mu(1)"]\\\\
S(C)\arrow[rr,"S(!)"{below}]&&S(1)
\end{tikzcd}$$

Thus if $x$ is an $n$-cell of $C'$ then $f(x)=\text{colim}\mathcal{E}_{\langle X',C\rangle}$ where 
$X'$ is a rectangular $n$-divisor, and $S(!)(f(x))=S(!)(\text{colim}\mathcal{E}_{\langle X',C\rangle})
=\text{colim}\mathcal{E}_{\langle X',1\rangle}$, and $g(x)=\text{colim}\mathcal{E}_{[X,\mathbb{C}\text{-}\mathbb{P}\text{ast},1]}$
thus $\mu(1)(g(x))=\mu(1)(\text{colim}\mathcal{E}_{[X,\mathbb{C}\text{-}\mathbb{P}\text{ast},1]})=
\text{colim}\mathcal{E}_{\langle X_{\langle X,\mathbb{C}\text{-}\mathbb{P}\text{ast}\rangle},1\rangle}$. But
the commutativity gives $S(!)(f(x))=\mu(1)(g(x))$ thus this commutativity gives $X'=X_{\langle X,\mathbb{C}\text{-}\mathbb{P}\text{ast}\rangle}$
and $f(x)=\text{colim}\mathcal{E}_{\langle X_{\langle X,\mathbb{C}\text{-}\mathbb{P}\text{ast}\rangle},C\rangle}$. It is then easy to see that we get a unique map $l$ :  

$$\begin{tikzcd}
C'\arrow[rrdd,"l",near end, dotted]\arrow[rrdddd,"f"{left}]\arrow[rrrrdd,"g"]\\\\
&&S^{2}(C)\arrow[dd,"\mu(C)"{left},near start]\arrow[rr,"S^{2}(!)"]&&S^{2}(1)\arrow[dd,"\mu(1)"]\\\\
&&S(C)\arrow[rr,"S(!)"{below}]&&S(1)
\end{tikzcd}$$
defined by 
$l(x)=\text{colim}\mathcal{E}_{[X,\mathbb{C}\text{-}\mathbb{P}\text{ast},C]}$, and is such that
$\mu(C)(l(x))=\text{colim}\mathcal{E}_{\langle X_{\langle X,\mathbb{C}\text{-}\mathbb{P}\text{ast}\rangle},C\rangle}=f(x)$
and $S^2(!)(l(x))=S^2(!)(\text{colim}\mathcal{E}_{[X,\mathbb{C}\text{-}\mathbb{P}\text{ast},C]})$
$=\text{colim}\mathcal{E}_{[X,\mathbb{C}\text{-}\mathbb{P}\text{ast},1]}=g(x)$.
The cartesianity of the unit
$$\begin{tikzcd}
C\arrow[rr,"\lambda"]&&S(C)
\end{tikzcd}$$
is easier and goes as follow : we start with a commutative diagram in $\CS$
$$\begin{tikzcd}
C'\arrow[dd,"f"{left}]\arrow[rr,"!"]&&1\arrow[dd,"\lambda(1)"]\\\\
S(C)\arrow[rr,"S(!)"{below}]&&S(1)
\end{tikzcd}$$
Let $x$ an $n$-cell of $C'$, thus we have $f(x)=\text{colim}\mathcal{E}_{\langle X',C\rangle}$
where $X'$ is a rectangular $n$-divisor. Thus $S(!)(f(x))=\text{colim}\mathcal{E}_{\langle X',1\rangle}$, 
and then the commutativity gives $\text{colim}\mathcal{E}_{\langle X',1\rangle}=1$, and this shows that
$X'=1(n)dx^{i}_{k_i}$ is just the basic $n$-box without degeneracies. It shows that there 
is a unique map $l$ :
$$\begin{tikzcd}
C'\arrow[rrdd,"l",near end, dotted]\arrow[rrdddd,"f"{left}]\arrow[rrrrdd,"!"]\\\\
&&C\arrow[dd,"\lambda(C)"{left},near start]\arrow[rr,"!"]&&1\arrow[dd,"\lambda(1)"]\\\\
&&S(C)\arrow[rr,"S(!)"{below}]&&S(1)
\end{tikzcd}$$
defined by 
$l(x)=\text{colim}\mathcal{E}_{\langle X',C\rangle}$, and such that
$\lambda(C)(l(x))=\text{colim}\mathcal{E}_{\langle X',C\rangle}=f(x)$. 
\end{proof}

With this theorem we solved the conjecture in \cite{camark-cub-1} for the monad of cubical strict $\infty$-categories 
with connections which provides a complete description of the cubical operad $\mathbb{B}^{0}_C$ of cubical weak 
$\infty$-categories with connections.

\begin{proposition}
The monad $\mathbb{S}=(S,\lambda,\mu)$ acting on $\CS$ which algebras are cubical strict $\infty$-categories (without connections)
is cartesian.
\end{proposition}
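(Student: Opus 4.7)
The plan is to repeat the argument for Theorem \ref{monad-S} essentially verbatim, restricted to the connection-free setting. First I would construct the connection-free analogue $\mathbb{R}^{\text{nc}}$ of the reflexive cubical sets monad of Proposition \ref{monad-R}, whose cells in $R^{\text{nc}}(1)(n)$ are built from strings of ordinary degeneracies $1^{n}_{n+1,i}$ alone; the proof of Proposition \ref{monad-R} specialises word-for-word, since the concatenation of two strings of ordinary degeneracies is again such a string, and the forced equalities in the cartesianness diagram chase go through identically. Using degenerate boxes $(\square^{dx^{i}_{k_i}}_{1(n)}, \equiv_A)$ with $A \in R^{\text{nc}}(1)(n)$ as basic building blocks, one builds the connection-free rectangular divisors and obtains a full subcategory $\Theta_0^{\text{nc}} \subset \Theta_0$, whose set $\mathbb{C}\text{-}\mathbb{P}\text{ast}^{\text{nc}}$ inherits a cubical strict $\infty$-category structure (without connections) exactly as in \ref{rectangle}.

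Next I would set
\[
S(C) := \bigcup_{X \in \Theta_{0}^{\text{nc}}} \text{hom}_{\CS}(\text{colim}\, \mathcal{E}_{\langle X, \text{Y}\rangle}, C),
\]
with unit $\lambda$ sending a $p$-cell $c$ to the decoration of the trivial basic box $1(p)dx^{i}_{k_i}$ by $c$, and multiplication $\mu$ defined by the substitution procedure of Section \ref{proof}: a two-level decoration $\langle X, \mathbb{C}\text{-}\mathbb{P}\text{ast}^{\text{nc}}, C\rangle$ is collapsed to a single decoration via $X \mapsto X_{\langle X, \mathbb{C}\text{-}\mathbb{P}\text{ast}^{\text{nc}}\rangle}$. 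That this yields a monad whose algebras are cubical strict $\infty$-categories without connections follows from the universal property of $\Theta_0^{\text{nc}}$ among connection-free cubical extensions, exactly as for $\Theta_0$ and $\mathbb{C}\text{-}\mathbb{E}\text{xt}$.

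With these ingredients the cartesianness argument of Theorem \ref{monad-S} transfers literally. The endofunctor $S$ preserves fibred products directly from the union-of-hom-sets formula. For the cartesianness of $\mu$, given a commutative square with maps $f \colon C' \to S(C)$ and $g \colon C' \to S^2(1)$ over $S(!)$ and $\mu(1)$, commutativity forces the underlying divisor of $f(x)$ to equal $X_{\langle X, \mathbb{C}\text{-}\mathbb{P}\text{ast}^{\text{nc}}\rangle}$, where $X$ and the inner decoration are read off from $g(x)$; the unique lift $l(x) = \text{colim}\, \mathcal{E}_{[X, \mathbb{C}\text{-}\mathbb{P}\text{ast}^{\text{nc}}, C]}$ then exists. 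The unit is cartesian by the same forcing argument: commutativity with the terminal map requires the underlying divisor of $f(x)$ to be $1(n)dx^{i}_{k_i}$, placing $f(x)$ in the image of $\lambda(C)$.

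The only genuinely new point requiring care --- and really the main obstacle to the transferred proof --- is the closure of $\Theta_0^{\text{nc}}$ under substitution: re-indexing the basic divisors inside each $X_{(k^{l}_{1},\cdots,k^{l}_{n})}$ of a $\mathbb{C}\text{-}\mathbb{P}\text{ast}^{\text{nc}}$-decoration must not introduce a connection. This is automatic because the re-indexing operation only concatenates cells of $R^{\text{nc}}(1)$ and glues along pasting-sources and pasting-targets, neither of which ever produces a connection operator; hence substitution preserves connection-freeness. Once this closure is recorded, every diagram chase from the proof of Theorem \ref{monad-S} applies unchanged, yielding the desired cartesianness.
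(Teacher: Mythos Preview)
Your proposal is correct and follows exactly the paper's approach: the paper's own proof is a one-line sketch saying to rerun the argument of the preceding theorem using only rectangular divisors built with classical degeneracies $1^{n}_{n+1,j}$ (no connections) and their associated rectangular sketches, which is precisely what you have spelled out in detail. Your explicit remark about closure of $\Theta_0^{\text{nc}}$ under substitution is a point the paper leaves implicit, so your write-up is in fact more careful than the original.
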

\label{monad-S-prime}

\begin{proof}
This is easy, here we just use the previous proof by using only rectangular divisors build with classical degeneracies $1^{n}_{n+1,j}$
 ($n\in\mathbb{N}\text{ and }j\in\llbracket 1,n+1\rrbracket$) and their associated rectangular sketches.
\end{proof}

With this proposition we can easily use the materials in \cite{camark-cub-1} to build the cubical operad $\mathbb{B}^{0}_C$ of 
cubical weak $\infty$-categories without connections. In particular it is interesting to know that $\mathbb{B}^{0}_C$-algebras
of dimensions $2$ are exactly \textit{double categories of Verity} \cite{verity-thesis}. The proof of such fact is made just by 
mimic the proof of Michael Batanin in \cite{batanin-main} where he proved that with globular operads, $\mathbb{B}^{0}_C$-algebras
of dimensions $2$ are exactly bicategories. 

\section{The cubical coherator $\Theta^{\infty}_W$ of cubical weak $\infty$-categories with connections}
\label{coherator-cubique-M}

\begin{itemize}

\item A \textit{cubical theory} is given by a cubical extension (\ref{proof}) :
$$\begin{tikzcd}
\mathbb{C}\arrow[rr,"F"]&&\mathcal{C}
\end{tikzcd}$$
such that the induced unique functor $\bar{F}$ :
$$\begin{tikzcd}
\Theta_0\arrow[rr,"\bar{F}"]&&\mathcal{C}
\end{tikzcd}$$
 is bijective on objects, and thus a cubical theory\footnote{A cubical extension is written by using the Greek letter 
 $\Theta$ when it is a cubical theory.} $\Theta$ is a small category which objects are identify with rectangular divisors. 
 In particular a chosen initial object in 
 $\mathbb{C}\text{-}\mathbb{E}\text{xt}$ :
 
 $$\begin{tikzcd}
 \mathbb{C}\arrow[rr,"i"]&&\Theta_0
 \end{tikzcd}$$
 
 is a specific cubical theory called the initial cubical theory. The full subcategory of $\mathbb{C}\text{-}\mathbb{E}\text{xt}$
 which objects are cubical theories is denoted $\mathbb{C}\text{-}\mathbb{T}\text{h}$ and the cubical theory 
\begin{tikzcd}
 \mathbb{C}\arrow[rr,"i"]&&\Theta_0
 \end{tikzcd}
 is initial in it. It is interesting 
 to notice that morphisms $G$ in $\mathbb{C}\text{-}\mathbb{T}\text{h}$ : 
 
 $$\begin{tikzcd}
 &&\Theta\arrow[dd,"G"]\\
 \mathbb{C}\arrow[rru,"F"]
 \arrow[rrd,"F'"{below}]\\
 &&\Theta' 
 \end{tikzcd}$$ 
 
 induce, thanks to the universality of $\Theta_0$,
 the following commutative triangles in the category $\Cat$ of small categories :
 $$\begin{tikzcd}
 &&\Theta\arrow[dd,"G"]\\
 \Theta_0\arrow[rru,"\bar{F}"]
 \arrow[rrd,"\bar{F}'"{below}]\\
 &&\Theta' 
 \end{tikzcd}$$
 
 and more precisely this is a commutative triangle in the subcategory 
 $\mathbb{C}\text{-}\mathbb{S}\text{ketch}\subset\mathbb{S}\text{ketch}$ of the category of small sketches equipped with
 the cocones which underly the rectangular sketches $\mathcal{E}_X$ of 
 the section \ref{sketches} where $X$ is a rectangular $n$-divisor, because 
 all the functors $G$, $\bar{F}$ and $\bar{F}'$ do preserve these cocones.

 \item A set-model for the theory $\Theta$ or a $\Theta$-model for short is given by a functor :
 $$\begin{tikzcd}
 \Theta^{\text{op}}\arrow[rr,"G"]&&\E
 \end{tikzcd}$$
 
 which sends cubical sums to cubical products : more precisely by using the diagram :  
 
  $$\begin{tikzcd}
 \Theta_0^{op}\arrow[rr,"\bar{F}^{op}"]&&\Theta^{\text{op}}\arrow[rr,"G"]&&\E
 \end{tikzcd}$$ 
 
 we require the equality $(G\circ \bar{F}^{op})(\mathcal{E}_{X})=\text{lim}\mathcal{E}^{op}_{G(X)}$, 
 where $G(X)$ denotes the rectangular divisor decorated by occurrences of $G(I^p)$ i.e we substitute in each basic divisor 
 of $X$, $1(p)$ by $G(I^p)$, and the projective sketch $\mathcal{E}^{op}_{G(X)}$ is defined as the opposite sketch of 
 $\mathcal{E}_{G(X)}$.

\item A crucial example of cubical theory is the one $\Theta_{\mathbb{M}}$ of cubical reflexive $\infty$-magmas. We recall
their definition (\cite{cam-cubique}) :
consider a cubical reflexive set 
$$(C,(1^{n}_{n+1,j})_{n\in\mathbb{N},j\in\llbracket 1,n+1 \rrbracket}, (1^{n,\gamma}_{n+1,j})_{n\geq 1,j\in\llbracket 1,n \rrbracket})$$
equipped with partial operations 
$(\circ^{n}_{j})_{n\geq 1, j\in\llbracket 1,n \rrbracket}$ 
where if $a, b \in C(n)$ then $a\circ^{n}_{j}b$ is defined for $j\in\{1,...,n\}$ if 
$s^{n}_{j}(b)=t^{n}_{j}(a)$. We also require these operations to follow the following axioms of positions :

\begin{enumerate}[(i)]

\item For $1\leq j\leq n$ we have : $s^{n}_{n-1,j}(a\circ^{n}_{j}b)=s^{n}_{n-1,j}(a)$ and $t^{n}_{n-1,j}(a\circ^{n}_{j}b)=t^{n}_{n-1,j}(a)$,\\

\item $s^{n}_{n-1,i}(a\circ^{n}_{j}b)=\left\{
\begin{array}{rl}
  s^{n}_{n-1,i}(a)\circ^{n-1}_{j-1}s^{n}_{n-1,i}(b) \hspace{.1cm}\text{if} \hspace{.1cm}1\leq i<j\leq n \\
  s^{n}_{n-1,i}(a)\circ^{n-1}_{j}s^{n}_{n-1,i}(b)\hspace{.1cm}\text{if}\hspace{.1cm}1\leq j<i\leq n
 \end{array}
\right.$ 

\item $t^{n}_{n-1,i}(a\circ^{n}_{j}b)=\left\{
\begin{array}{rl}
  t^{n}_{n-1,i}(a)\circ^{n-1}_{j-1}t^{n}_{n-1,i}(b)\hspace{.1cm}\text{if}\hspace{.1cm}1\leq i<j\leq n \\
  t^{n}_{n-1,i}(a)\circ^{n-1}_{j}t^{n}_{n-1,i}(b)\hspace{.1cm}\text{if}\hspace{.1cm}1\leq j<i\leq n
 \end{array}
\right.$ 
\end{enumerate}

\end{itemize}

\begin{definition}
Cubical $\infty$-magmas are cubical sets equipped with partial operations like above. A morphism between two cubical $\infty$-magmas is a morphism of their underlying
cubical sets which respects partial operations 
$(\circ^{n}_{j})_{n\geq 1, j\in\llbracket 1,n \rrbracket}$.The category of cubical $\infty$-magmas is noted $\I\Cu\Mag$
\end{definition}

\begin{definition} 
Cubical reflexive $\infty$-magmas are cubical reflexive set equipped a structure of $\infty$-magmas. A morphism between 
two cubical reflexive $\infty$-magmas is a morphism of their underlying
cubical reflexive sets which respects partial operations $(\circ^{n}_{j})_{n\geq 1, j\in\llbracket 1,n \rrbracket}$. 
The category of cubical reflexive $\infty$-magmas is noted $\I\Cu\Mag_\text{r}$
\end{definition}

Now the forgetful functor
$$\begin{tikzcd}
\I\Cu\Mag_\text{r}\arrow[rr,"V"]&&\CS
\end{tikzcd}$$
is right adjoint and it induces the monad $\mathbb{M}=(M,\eta,\mu)$ of cubical reflexive
$\infty$-magmas with its Kleisli category $\mathbb{K}\text{l}(\mathbb{M})$. Denote by 
$\Theta_{\mathbb{M}}$ the full subcategory of $\mathbb{K}\text{l}(\mathbb{M})$ which
objects are objects of $\Theta_0$. This small category $\Theta_{\mathbb{M}}$ equipped
with the canonical inclusion functor
\begin{tikzcd}
\Theta_0\arrow[rr,"j"]&&\Theta_{\mathbb{M}}
\end{tikzcd}
is an important cubical theory because it
is the basic data we need to build the coherator $\Theta_W$ which models are cubical
weak $\infty$-categories with connections.

Consider an object 
\begin{tikzcd}
 \mathbb{C}\arrow[rr,"i"]&&\Theta
 \end{tikzcd}
 of the category $\mathbb{C}\text{-}\mathbb{T}\text{h}$ of cubical theories and the unique 
 functor 
 \begin{tikzcd}
 \Theta_0\arrow[rr,"\bar{i}"]&&\Theta
 \end{tikzcd}. 
 A $I^{n}$-arrow in $\Theta$ is one arrow
 of it with domain the object $I^{n}$ (which is by definition 
 equal to $i(1(n))$). A pair $(f,g)$ of $I^{n}$-arrows in $\Theta$ :
$$\begin{tikzcd}
 I^{n}\arrow[rr, yshift=1.1ex,"f"]\arrow[rr, yshift=-1.1ex,"g"{below}]   
        &&X 
 \end{tikzcd}$$
 
 is called 
 
 \begin{itemize}
  \item admissible if it doesn't belong to the image of $\bar{i}$
  
  \item $j$-admissible (for a direction $j\in\llbracket 1,n\rrbracket$) if it is admissible and it is 
  $j$-parallel, i.e $f\circ s^{n}_{n-1,j}=g\circ s^{n}_{n-1,j}$ and 
  $f\circ t^{n}_{n-1,j}=g\circ t^{n}_{n-1,j}$

$$\begin{tikzcd}
 I^{n}\arrow[rrr, yshift=1.1ex,"f"]\arrow[rrr, yshift=-1.1ex,"g"{below}]&&&X\\\\
 I^{n-1}\arrow[uu,xshift=1.1ex,"t^{n}_{n-1,j}"{right}]\arrow[uu,xshift=-1.1ex,"s^{n}_{n-1,j}"]  
 \end{tikzcd}$$
 
 \end{itemize}

If a pair $(f,g)$ of $I^{n}$-arrows :
 \begin{tikzcd}
 I^{n}\arrow[rr, yshift=1.1ex,"f"]\arrow[rr, yshift=-1.1ex,"g"{below}]   
        &&X 
 \end{tikzcd}
 is admissible, then we define its \textit{liftings} which are all $j$-\textit{lifting arrows} $[f,g]^{n}_{n+1,j}$ for all $j\in\llbracket 1,n+1\rrbracket$ :

 $$\begin{tikzcd}
 I^{n+1}\arrow[rrrdd,dotted,"{[f,g]^{n}_{n+1,j}}"]\\\\
 I^{n}\arrow[uu,xshift=1.1ex,"t^{n+1}_{n,i}"{right}]\arrow[uu,xshift=-1.1ex,"s^{n+1}_{n,i}"]
 \arrow[rrr, yshift=1.1ex,"f"]\arrow[rrr, yshift=-1.1ex,"g"{below}]&&&X\\\\
 I^{n-1}\arrow[uu,xshift=1.1ex,"t^{n}_{n-1,i}"{right}]\arrow[uu,xshift=-1.1ex,"s^{n}_{n-1,i}"]  
 \end{tikzcd}$$
 
 by using an induction. Thus we suppose that such operations 
 $[-,-]^{p}_{p+1,k}$ ($p\leq n-1 \text{ and }k\in\llbracket 1,p+1\rrbracket$) exists for all faces of $f$ and $g$. The
 definition of $[f,g]^{n}_{n+1,j}$ goes as follow :
 
 \begin{itemize}
 \item if $1\leq i<j\leq n+1$ then $[f,g]^{n}_{n+1,j}\circ s^{n+1}_{n,i}=[f\circ s^{n}_{n-1,i},g\circ s^{n}_{n-1,i}]^{n-1}_{n,j-1}$
 and $[f,g]^{n}_{n+1,j}\circ t^{n+1}_{n,i}=[f\circ t^{n}_{n-1,i},g\circ t^{n}_{n-1,i}]^{n-1}_{n,j-1}$ 
 
 \item if $1\leq j\leq n+1$ then $[f,g]^{n}_{n+1,j}\circ s^{n+1}_{n,j}=f$ and $[f,g]^{n}_{n+1,j}\circ t^{n+1}_{n,j}=g$

 \item if $1\leq j<i\leq n+1$ then $[f,g]^{n}_{n+1,j}\circ s^{n+1}_{n,i}=[f\circ s^{n}_{n-1,i-1},g\circ s^{n}_{n-1,i-1}]^{n-1}_{n,j}$ 
 and $[f,g]^{n}_{n+1,j}\circ t^{n+1}_{n,i}=[f\circ t^{n}_{n-1,i-1},g\circ t^{n}_{n-1,i-1}]^{n-1}_{n,j}$

 \end{itemize}
 
%
%
%
%
%
%
%
%
%
%
 
If a pair $(f,g)$ of $I^{n}$-arrows :
 \begin{tikzcd}
 I^{n}\arrow[rr, yshift=1.1ex,"f"]\arrow[rr, yshift=-1.1ex,"g"{below}]   
        &&X 
 \end{tikzcd}
 is $j$-admissible (for a fixed direction $j\in\llbracket 1,n\rrbracket$), then we define its $(j,-)$-\textit{lifting arrow} $[f,g]^{n,-}_{n+1,j}$
 or its $(j,-)$-lifting for short, the following $I^{n+1}$-arrow : 
 
 $$\begin{tikzcd}
 I^{n+1}\arrow[rrrdd,dotted,"{[f,g]^{n,-}_{n+1,j}}"]\\\\
 I^{n}\arrow[uu,xshift=1.1ex,"t^{n+1}_{n,i}"{right}]\arrow[uu,xshift=-1.1ex,"s^{n+1}_{n,i}"]
 \arrow[rrr, yshift=1.1ex,"f"]\arrow[rrr, yshift=-1.1ex,"g"{below}]&&&X\\\\
 I^{n-1}\arrow[uu,xshift=1.1ex,"t^{n}_{n-1,i}"{right}]\arrow[uu,xshift=-1.1ex,"s^{n}_{n-1,i}"] 
 \end{tikzcd}$$
 
 by using an induction. Thus we suppose that such operations 
 $[-,-]^{p,-}_{p+1,k}$ ($p\leq n-1 \text{ and }k\in\llbracket 1,p\rrbracket$) exists for all faces of $f$ and $g$, but also 
 (see the induction used just below) we have to suppose that the operations $[-,-]^{p}_{p+1,k}$ ($p\leq n-1 \text{ and }k\in\llbracket 1,p+1\rrbracket$) defined above exists for such faces.
 The definition of $[f,g]^{n,-}_{n+1,j}$ goes as follow :
 
 \begin{itemize}
 
 \item if $1\leq i<j\leq n$ then 
 $[f;g]^{n,-}_{n+1,j}\circ s^{n+1}_{n,i}=[f\circ s^{n}_{n-1,i};g\circ s^{n}_{n-1,i}]^{n-1,-}_{n,j-1}$
 and 
  $[f;g]^{n,-}_{n+1,j}\circ t^{n+1}_{n,i}=[f\circ t^{n}_{n-1,i};g\circ t^{n}_{n-1,i}]^{n-1,-}_{n,j-1}$

 \item if $1\leq j\leq n$ then $[f;g]^{n,-}_{n+1,j}\circ s^{n+1}_{n,j}=f$ and $[f;g]^{n,-}_{n+1,j}\circ s^{n+1}_{n,j+1}=g$,
 and $[f;g]^{n,-}_{n+1,j}\circ t^{n+1}_{n,j}=[f;g]^{n,-}_{n+1,j}\circ t^{n+1}_{n,j+1}=[f\circ t^{n}_{n-1,j},g\circ t^{n}_{n-1,j}]^{n-1}_{n,j}$
 
 \item if $2\leq j+1<i\leq n+1$ then $[f;g]^{n,-}_{n+1,j}\circ s^{n+1}_{n,i}=[f\circ s^{n}_{n-1,i-1};g\circ s^{n}_{n-1,i-1}]^{n-1,-}_{n,j}$ and 
 $[f;g]^{n,-}_{n+1,j}\circ s^{n+1}_{n,i}=[f\circ s^{n}_{n-1,i-1};g\circ s^{n}_{n-1,i-1}]^{n-1,-}_{n,j}$

 \end{itemize} 
 
%
%
%
%
%
%

If a pair $(f,g)$ of $I^{n}$-arrows :
 \begin{tikzcd}
 I^{n}\arrow[rr, yshift=1.1ex,"f"]\arrow[rr, yshift=-1.1ex,"g"{below}]   
        &&X 
 \end{tikzcd}
 is $j$-admissible (for a fixed direction $j\in\llbracket 1,n\rrbracket$), then we define its $(j,+)$-\textit{lifting arrow} $[f,g]^{n,+}_{n+1,j}$  
 or its $(j,+)$-lifting for short, the following $I^{n+1}$-arrow : 
 
 $$\begin{tikzcd}
 I^{n+1}\arrow[rrrdd,dotted,"{[f,g]^{n,+}_{n+1,j}}"]\\\\
 I^{n}\arrow[uu,xshift=1.1ex,"t^{n+1}_{n,i}"{right}]\arrow[uu,xshift=-1.1ex,"s^{n+1}_{n,i}"]
 \arrow[rrr, yshift=1.1ex,"f"]\arrow[rrr, yshift=-1.1ex,"g"{below}]&&&X\\\\
 I^{n-1}\arrow[uu,xshift=1.1ex,"t^{n}_{n-1,i}"{right}]\arrow[uu,xshift=-1.1ex,"s^{n}_{n-1,i}"] 
 \end{tikzcd}$$
 
 by using an induction. Thus we suppose that such operations 
 $[-,-]^{p,+}_{p+1,k}$ ($p\leq n-1 \text{ and }k\in\llbracket 1,p\rrbracket$) exists for all faces of $f$ and $g$, but also 
 (see the induction used just below) we have to suppose that the operations $[-,-]^{p}_{p+1,k}$ ($p\leq n-1 \text{ and }k\in\llbracket 1,p+1\rrbracket$) defined above exists for such faces.
 The definition of $[f,g]^{n,+}_{n+1,j}$ goes as follow :
 
 \begin{itemize}
 
 \item if $1\leq i<j\leq n$ then 
 $[f;g]^{n,+}_{n+1,j}\circ s^{n+1}_{n,i}=[f\circ s^{n}_{n-1,i};g\circ s^{n}_{n-1,i}]^{n-1,+}_{n,j-1}$
 and 
  $[f;g]^{n,+}_{n+1,j}\circ t^{n+1}_{n,i}=[f\circ t^{n}_{n-1,i};g\circ t^{n}_{n-1,i}]^{n-1,+}_{n,j-1}$

 \item if $1\leq j\leq n$ $[f;g]^{n,-}_{n+1,j}\circ s^{n+1}_{n,j}=[f;g]^{n,-}_{n+1,j}\circ s^{n+1}_{n,j+1}=[f\circ s^{n}_{n-1,j},g\circ s^{n}_{n-1,j}]^{n-1}_{n,j}$
 and $[f;g]^{n,+}_{n+1,j}\circ t^{n+1}_{n,j}=f$ and $[f;g]^{n,+}_{n+1,j}\circ t^{n+1}_{n,j+1}=g$,

 \item if $2\leq j+1<i\leq n+1$ then $[f;g]^{n,+}_{n+1,j}\circ s^{n+1}_{n,i}=[f\circ s^{n}_{n-1,i-1};g\circ s^{n}_{n-1,i-1}]^{n-1,+}_{n,j}$ and 
 $[f;g]^{n,+}_{n+1,j}\circ t^{n+1}_{n,i}=[f\circ t^{n}_{n-1,i-1};g\circ t^{n}_{n-1,i-1}]^{n-1,+}_{n,j}$ 
 
 \end{itemize}

%
%
%
%
%
%
%
%

\begin{definition}
A cubical theory $\Theta$ is contractible if for all integer $n\geq 1$, for all pairs $(f,g)$ of $I^{n}$-arrows in it which are admissible,
have liftings, and for all pairs $(f,g)$ of $I^{n}$-arrows in it which are $j$-admissible ($j\in\llbracket 1,n\rrbracket$), 
have a $(j,-)$-lifting and have a $(j,+)$-lifting
\end{definition}

Now we are going to build a cubical contractible theory $\Theta^{\infty}_W$ which set-models are cubical
weak $\infty$-categories with connections. This theory $\Theta^{\infty}_W$ is a coherator in the sense
of Grothendieck (\cite{Maltsin-Gr}), i.e it is obtained as a colimit of a diagram 
$\mathcal{D}_{\Theta_W}$ in $\Cat$ 
of cubical theories :

$$\begin{tikzcd}
\Theta_0\arrow[rrrrrrrrrrrrdd]\arrow[rr]&&\Theta_{M,0}\arrow[rrrrrrrrrrdd]\arrow[rr]&&\Theta_{M,1}
\arrow[rrrrrrrrdd]\arrow[rr]&&\Theta_{M,2}\arrow[rrrrrrdd]\arrow[rr,]&&\cdots
\arrow[rr]&&\Theta_{M,m}\arrow[rrdd]\arrow[rr]&&\Theta_{M,m+1}\arrow[dd]\arrow[rr]&&\cdots\\\\
&&&&&&&&&&&&\Theta^{\infty}_W
\end{tikzcd}$$

and this diagram $\mathcal{D}_{\Theta_W}$ is a sequence in the category $\mathbb{C}\text{-}\mathbb{T}\text{h}$ 
of cubical theories :

$$\begin{tikzcd}
\mathbb{C}\arrow[dd]\arrow[rrdd]\arrow[rrrrdd]\arrow[rrrrrrdd]\arrow[rrrrrrrrrrdd]\arrow[rrrrrrrrrrrrdd]\\\\
\Theta_0\arrow[rr]&&\Theta_{M,0}\arrow[rr]&&\Theta_{M,1}
\arrow[rr]&&\Theta_{M,2}\arrow[rr,]&&\cdots
\arrow[rr]&&\Theta_{M,m}\arrow[rr]&&\Theta_{M,m+1}\arrow[rr]&&\cdots
\end{tikzcd}$$

that we may define inductively :

\begin{itemize}

\item We start the induction with $\Theta_{M,0}=\Theta_{\mathbb{M}}$ i.e with the
cubical theory of cubical reflexive $\infty$-magmas.

\item We denote by $E_{M,0}$ the set which is the union of all admissible pairs of $I^n$-arrows in $\Theta_{\mathbb{M}}$
(for all $n\geq 1$), all $j$-admissible pairs of $I^n$-arrows in $\Theta_{\mathbb{M}}$ 
(for all directions $j\in\llbracket 1,n\rrbracket$ for all $n\geq 1$);

\item $\Theta_{M,1}$ is obtained by formally (see just below a precise meaning of "formally") adding in $\Theta_{M,0}$ all kind of liftings 
of elements of $E_{M,0}$.

\item Denote by $E_{M,1}$ the set which is the union of  : all admissible pairs of $I^n$-arrows in $\Theta_{M,1}$ which are
not in $E_{M,0}$, and all $j$-admissible pairs of $I^n$-arrows in $\Theta_{M,1}$ which are
not in $E_{M,0}$. 

\item $\Theta_{M,2}$ is obtained by formally adding in $\Theta_{M,1}$ all kind of liftings of elements of $E_{M,1}$.

\item we suppose that until the integer $m-1$ the sequence :

$$\begin{tikzcd}
(\Theta_{M,0},E_{M,0})\arrow[rr]&&(\Theta_{M,1},E_{M,1})
\arrow[rr]&&\cdots
\arrow[rr]&&(\Theta_{M,m-2},E_{M,m-2})\arrow[rr]&&(\Theta_{M,m-1},E_{M,m-1})
\end{tikzcd}$$

is well defined. Thus $\Theta_{M,m}$ is obtained by formally adding in $\Theta_{M,m-1}$ all
kind of liftings of elements of $E_{M,m-1}$.

\item we associate to $\Theta_{M,m}$ the set $E_{M,m}$ which is the union of : 
all admissible pairs of $I^n$-arrows in $\Theta_{M,m}$ which are
not in $E_{M,m-1}$, all $j$-admissible pairs of $I^n$-arrows in $\Theta_{M,m}$ which are
not in $E_{M,m-1}$.
\end{itemize}

An important fact is the cubical theory $\Theta_{M,m}$ obtained by formally adding in $\Theta_{M,m-1}$ all
liftings of elements of $E_{M,m-1}$ is universal for this adding. To give a precise meaning of "formally adding" 
is just an application of the following theorem of Christian Lair\footnote{This result was found by Christian Lair, but we
were not able to find an exact reference of it.} :
\begin{theorem}[Lair]
The category $\mathbb{S}\text{ketch}$ of Sketches is projectively sketchable, that is there 
a projective sketch $\mathcal{E}_{\mathbb{S}\text{ketch}}$ such that the category 
$\mathbb{M}\text{od}(\mathcal{E}_{\mathbb{S}\text{ketch}})$ of set-models 
 of $\mathcal{E}_{\mathbb{S}\text{ketch}}$ is equivalent to the category $\mathbb{S}\text{ketch}$.
\end{theorem}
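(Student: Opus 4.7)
The plan is to construct $\mathcal{E}_{\mathbb{S}\text{ketch}}$ in two layers, starting from the classical projective sketch $\mathcal{E}_{\Cat}$ whose set-models are small categories. This base sketch has primary sorts $\text{Ob}$ and $\text{Mor}$, operations $s, t \colon \text{Mor} \to \text{Ob}$, $\text{id} \colon \text{Ob} \to \text{Mor}$, and $\circ \colon \text{Mor} \underset{\text{Ob}}{\times} \text{Mor} \to \text{Mor}$, with the fibred product encoded by a distinguished projective cone and with associativity, unit laws, and source-target compatibility imposed by commuting diagrams; one then has the standard equivalence $\mathbb{M}\text{od}(\mathcal{E}_{\Cat}) \simeq \Cat$. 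First I would recall this construction in detail, as it provides the template for the more elaborate extension below.

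Second I would enlarge $\mathcal{E}_{\Cat}$ by introducing a sort $\mathbf{Cn}$ of distinguished projective cones and a sort $\mathbf{Cc}$ of distinguished inductive cocones, together with two further copies of the $\text{Ob}$--$\text{Mor}$ structure (denoted $\text{Ob}_{I}, \text{Mor}_{I}$ and $\text{Ob}_{J}, \text{Mor}_{J}$) to carry the indexing categories of cones and cocones respectively. To each element of $\mathbf{Cn}$ are then attached operations extracting the indexing category, an apex in $\text{Ob}$, a diagram functor (two morphisms on objects and on morphisms, landing in $\text{Ob}$ and $\text{Mor}$), and a family of legs into $\text{Mor}$. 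Functoriality of the diagram and naturality of the legs are expressed as equalities of composite operations, each enforced by a distinguished projective cone (equaliser or pullback); an analogous system of sorts and cones handles $\mathbf{Cc}$.

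The main obstacle, and the technically delicate point of the proof, is that the indexing category varies cone-by-cone, so the sorts $\text{Ob}_{I}$ and $\text{Mor}_{I}$ cannot be single sets but must be fibred over $\mathbf{Cn}$. I would handle this in a strictly projective manner by replacing the would-be dependent sorts with ordinary sorts equipped with projections $\pi \colon \text{Ob}_{I} \to \mathbf{Cn}$ and $\pi \colon \text{Mor}_{I} \to \mathbf{Cn}$, and by imposing all fibrewise compatibilities (that source and target of a morphism in $\text{Mor}_{I}$ lie over the same cone, that the diagram map of cone $\gamma$ sends indexing data over $\gamma$ to ambient data in $\text{Ob}, \text{Mor}$, that the apex operation factors through the projection, and that the category axioms for the indexing data hold fibrewise) via distinguished projective cones realised as pullback squares over $\mathbf{Cn}$. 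This essentially algebraic treatment of the dependent family is what keeps the whole construction projective.

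Finally I would verify the equivalence $\mathbb{M}\text{od}(\mathcal{E}_{\mathbb{S}\text{ketch}}) \simeq \mathbb{S}\text{ketch}$: the $\mathcal{E}_{\Cat}$-reduct of a model $G$ yields its underlying small category, while the interpretations of the fibred sorts together with the extracting operations reconstruct the two indexed families of distinguished cones and cocones, the imposed projective cones guaranteeing that these families really are cones and cocones in the ambient category $G(\text{Ob}, \text{Mor})$. Conversely, any sketch in the classical sense defines such a model tautologically, and the two assignments are functorial and mutually inverse on morphisms, completing the proof.
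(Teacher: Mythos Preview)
The paper does not prove this theorem: it is stated as a result of Christian Lair, with an explicit footnote that the author could not locate a precise reference. There is thus no argument in the paper to compare your proposal against.

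On its own merits your outline is sound and follows the expected route. The key recognition---that the variable indexing categories are the only real obstacle, and that they can be handled by replacing dependent sorts with total-space sorts carrying projections to $\mathbf{Cn}$ (resp.\ $\mathbf{Cc}$), with all fibrewise conditions imposed by pullback cones over $\mathbf{Cn}$---is exactly the standard translation of an essentially algebraic theory with dependent sorts into a projective (limit) sketch. Two points you might tighten. First, equalities such as associativity in the indexing category, functoriality of the diagram, and naturality of the legs are not themselves ``enforced by a distinguished projective cone'': in a projective sketch they are simply commuting diagrams in the underlying category, and cones are reserved for specifying the domains of partial operations (the fibred products over $\text{Ob}$ or over $\mathbf{Cn}$). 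Second, the phrase ``a family of legs into $\text{Mor}$'' hides a further layer of dependence: the legs form a sort fibred over $\text{Ob}_{I}$, hence doubly fibred over $\mathbf{Cn}$, with a projection to $\text{Mor}$ and pullback conditions forcing the source of each leg to be the apex and the target to be the diagram value at that index. Once these are made explicit the construction goes through, and the final equivalence check is routine bookkeeping.
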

Also the category $\Cat$ of small categories is also projectively sketchable by a projective 
sketch $\mathcal{E}_{\Cat}$ and we have an easy morphism of projective sketches :
\begin{tikzcd}
\mathcal{E}_{\Cat}\arrow[r,"i"]&\mathcal{E}_{\mathbb{S}\text{ketch}}
\end{tikzcd}
which induces a left adjunction $F$ with the functor $\mathbb{M}\text{od}(i)$ :
\begin{tikzcd}
\mathbb{S}\text{ketch}\arrow[r,"F"]&\Cat
\end{tikzcd}
This construction is called the \textit{free prototype functor}\footnote{Private communication with Christian Lair.}. With these results in hands
it is useful to see the cubical theory $\Theta_{M,m}$ obtained by formally adding in $\Theta_{M,m-1}$ all
liftings of elements of $E_{M,m-1}$ as the free category (with the free prototype functor) generated by this adding. 
Thus we start with the object $\Theta_{M,m-1}+E_{M,m-1}$ of $\mathbb{S}\text{ketch}$, where we 
formally\footnote{Here "formally" has an accurate logical sense.}
add all liftings of elements of $E_{M,m-1}$ in the sketch $\Theta_{M,m-1}$, then $\Theta_{M,m}$ is just the free category 
$F(\Theta_{M,m-1}+E_{M,m-1})$ generated by the free prototype functor.

The colimits $\Theta^{\infty}_W$ in $\Cat$ :

$$\begin{tikzcd}
\Theta_0\arrow[rrrrrrrrrrrrdd]\arrow[rr]&&\Theta_{M,0}\arrow[rrrrrrrrrrdd]\arrow[rr]&&\Theta_{M,1}
\arrow[rrrrrrrrdd]\arrow[rr]&&\Theta_{M,2}\arrow[rrrrrrdd]\arrow[rr,]&&\cdots
\arrow[rr]&&\Theta_{M,m}\arrow[rrdd]\arrow[rr]&&\Theta_{M,m+1}\arrow[dd]\arrow[rr]&&\cdots\\\\
&&&&&&&&&&&&\Theta^{\infty}_W
\end{tikzcd}$$

is called the \textit{cubical coherator} of cubical weak $\infty$-categories with connections. Denote
by $\mathbb{M}\text{od}(\Theta^{\infty}_W)$ the category of $\Theta^{\infty}_W$-models in $\E$. The category
$\mathbb{M}\text{od}(\Theta^{\infty}_W)$ is a category of models of cubical weak $\infty$-categories with connections.
\begin{remark}
Of course we suspect this category to be equivalent to the category of algebras for the cubical operad built in \cite{camark-cub-1}, 
especially because this is true for the globular geometry \cite{bourke-injectif}. But we prefer to avoid such question here.
\end{remark}

\section{The cubical coherator $\Theta^{\infty}_{W^{0}}$ of cubical weak $\infty$-groupoids with connections}
\label{coherator-cubique-M0}

 Cubical $(\infty,0)$-sets underly a new sketch
(see diagrams below) 
which we use to define a coherator which models are cubical weak $\infty$-groupoids.
Here we define cubical version of
the formalism developed in \cite{Cam} for globular $(\infty,0)$-sets. This formalism of this cubical world is very similar to its globular analogue.
\label{reversible-omega-graphs}
Consider a cubical set $\mathcal{C}=(C_n,s^{n}_{n-1,j},t^{n}_{n-1,j})_{1\leq j\leq n}$. If $n\geq 1$ and 
$1\leq j\leq n$, then a $(n,j)$-reversor on it 
is given by a map $\xymatrix{C_{n}\ar[rr]^{j^{n}_{j}}&&C_{n}}$ such that the following two diagrams commute :
\[\xymatrix{C_{n}\ar[rr]^{j^{n}_{j}}\ar[rd]_{s^{n}_{n-1,j}}&&C_n\ar[ld]^{t^{n}_{n-1,j}}\\
&C_{n-1}}\qquad\xymatrix{C_{n}\ar[rr]^{j^{n}_{j}}\ar[rd]_{t^{n}_{n-1,j}}&&C_m\ar[ld]^{s^{n}_{n-1,j}}\\
&C_{n-1}}\]

If for each $n>0$ and for each $1\leq j\leq n$, there are such $(n,j)$-reversor $j^{n}_{j}$ on $\mathcal{C}$, then 
we say that $\mathcal{C}$ is a cubical $(\infty,0)$-set. The family of maps $(j^{n}_{j})_{n>0,1\leq j\leq n}$ for all
 $(n\in\mathbb{N}^*)$   is
called an $(\infty,0)$-structure and in that case we shall say that $\mathcal{C}$ is equipped with the $(\infty,0)$-structure 
$(j^{n}_{j})_{n>0,1\leq j\leq n}$. When we speak about such $(\infty,0)$-structure $(j^{n}_{j})_{n>0,1\leq j\leq n}$ on $\mathcal{C}$, 
it means that it is for all integers $n\in\mathbb{N}^*$ such that $C_n$ is non-empty. Seen as cubical $(\infty,0)$-set we denote it by
$\mathcal{C}=((C_n,s^{n}_{n-1,j},t^{n}_{n-1,j})_{1\leq j\leq n},(j^{n}_{j})_{n>0,1\leq j\leq n})$. If 
$\mathcal{C}'=((C_n',s'^{n}_{n-1,j},t'^{n}_{n-1,j})_{1\leq j\leq n},(j'^{n}_{j})_{n>0,1\leq j\leq n})$ is another
$(\infty,0)$-set, then a morphism of $(\infty,0)$-sets 
\[\xymatrix{\mathcal{C}\ar[rr]^{f}&&\mathcal{C}'}\]
is given by a morphism of cubical sets such that for each $n>0$ and for each $1\leq j\leq n$ we have the following commutative diagrams
\[\xymatrix{C_{n}\ar[d]_{f_{n}}\ar[rrr]^{j^{n}_{j}}&&&C_{n}\ar[d]^{f_{n}}\\
C'_{n}\ar[rrr]_{j'^{n}_{j}}&&&C'_{n}}\]
The category of cubical $(\infty,0)$-sets is denoted $(\infty,0)$-$\mathbb{C}\E$.
A cubical reflexive $(\infty,0)$-magma is an object of $\I\mathbb{C}\mathbb{M}\text{ag}_\text{r}$ such that its underlying
cubical set is equipped with an $(\infty,0)$-structure. Morphisms between cubical reflexive $(\infty,0)$-magmas are
those of $\I\mathbb{C}\mathbb{M}\text{ag}_\text{r}$ which are also morphisms of $(\infty,0)\text{-}\mathbb{C}\mathbb{S}ets$, i.e
they preserve the underlying $(\infty,0)$-structures. The category of cubical reflexive $(\infty,0)$-magmas is denoted 
$(\infty,0)\text{-}\mathbb{C}\mathbb{M}\text{ag}_\text{r}$.

Now the forgetful functor
$$\begin{tikzcd}
(\infty,0)\text{-}\mathbb{C}\mathbb{M}\text{ag}_\text{r}\arrow[rr,"V"]&&\CS
\end{tikzcd}$$
is right adjoint and it induces the monad $\mathbb{M}^{0}=(M^{0},\eta^{0},\mu^{0})$ of cubical reflexive
$(\infty,0)$-magmas with its Kleisli category $\mathbb{K}\text{l}(\mathbb{M}^{0})$. Denote by 
$\Theta_{\mathbb{M}^{0}}$ the full subcategory of $\mathbb{K}\text{l}(\mathbb{M}^{0})$ which
objects are objects of $\Theta_0$. This small category $\Theta_{\mathbb{M}^{0}}$ equipped
with the canonical inclusion functor
\begin{tikzcd}
\Theta_0\arrow[rr,"j^{0}"]&&\Theta_{\mathbb{M}^{0}}
\end{tikzcd}
is an important cubical theory because it
is the basic data we need to build the coherator $\Theta^{\infty}_{W^{0}}$ which models are cubical
weak $\infty$-groupoids with connections.

This theory $\Theta^{\infty}_{W^{0}}$ is a coherator in the sense
of Grothendieck (\cite{Maltsin-Gr}), i.e it is obtained as a colimit of a diagram 
$\mathcal{D}_{\Theta_{W^{0}}}$ in $\Cat$ 
of cubical theories :

$$\begin{tikzcd}
\Theta_0\arrow[rrrrrrrrrrrrdd]\arrow[rr]&&\Theta_{M^{0},0}\arrow[rrrrrrrrrrdd]\arrow[rr]&&\Theta_{M^{0},1}
\arrow[rrrrrrrrdd]\arrow[rr]&&\Theta_{M^{0},2}\arrow[rrrrrrdd]\arrow[rr,]&&\cdots
\arrow[rr]&&\Theta_{M^{0},m}\arrow[rrdd]\arrow[rr]&&\Theta_{M^{0},m+1}\arrow[dd]\arrow[rr]&&\cdots\\\\
&&&&&&&&&&&&\Theta^{\infty}_{W^{0}}
\end{tikzcd}$$

and this diagram $\mathcal{D}_{\Theta^{\infty}_{W^{0}}}$ is a sequence in the category $\mathbb{C}\text{-}\mathbb{T}\text{h}$ 
of cubical theories :

$$\begin{tikzcd}
\mathbb{C}\arrow[dd]\arrow[rrdd]\arrow[rrrrdd]\arrow[rrrrrrdd]\arrow[rrrrrrrrrrdd]\arrow[rrrrrrrrrrrrdd]\\\\
\Theta_0\arrow[rr]&&\Theta_{M^{0},0}\arrow[rr]&&\Theta_{M^{0},1}
\arrow[rr]&&\Theta_{M^{0},2}\arrow[rr,]&&\cdots
\arrow[rr]&&\Theta_{M^{0},m}\arrow[rr]&&\Theta_{M^{0},m+1}\arrow[rr]&&\cdots
\end{tikzcd}$$

that we may define inductively :

\begin{itemize}

\item We start the induction with $\Theta_{M^{0},0}=\Theta_{\mathbb{M}^{0}}$ i.e with the
cubical theory of cubical reflexive $(\infty,0)$-magmas.

\item We denote by $E_{M^{0},0}$ the set which is the union of all admissible pairs of $I^n$-arrows in $\Theta_{\mathbb{M}^{0}}$
(for all $n\geq 1$), all $j$-admissible pairs of $I^n$-arrows in $\Theta_{\mathbb{M}^{0}}$ 
(for all directions $j\in\llbracket 1,n\rrbracket$ for all $n\geq 1$);

\item $\Theta_{M^{0},1}$ is obtained by formally adding in $\Theta_{M^{0},0}$ all kind of liftings of elements of $E_{M^{0},0}$.

\item Denote by $E_{M^{0},1}$ the set which is the union of  : all admissible pairs of $I^n$-arrows in $\Theta_{M^{0},1}$ which are
not in $E_{M^{0},0}$, all $j$-admissible pairs of $I^n$-arrows in $\Theta_{M^{0},1}$ which are
not in $E_{M^{0},0}$. 

\item $\Theta_{M^{0},2}$ is obtained by formally adding in $\Theta_{M^{0},1}$ all kind of liftings of elements of $E_{M^{0},1}$.

\item we suppose that until the integer $m-1$ the sequence :

$$\begin{tikzcd}
(\Theta_{M^{0},0},E_{M^{0},0})\arrow[rr]&&(\Theta_{M^{0},1},E_{M^{0},1})
\arrow[rr]&&\cdots
\arrow[rr]&&(\Theta_{M^{0},m-2},E_{M^{0},m-2})\arrow[rr]&&(\Theta_{M^{0},m-1},E_{M^{0},m-1})
\end{tikzcd}$$

is well defined. Thus $\Theta_{M^{0},m}$ is obtained by formally adding in $\Theta_{M^{0},m-1}$ all
kind of liftings of elements of $E_{M^{0},m-1}$.

\item we associate to $\Theta_{M^{0},m}$ the set $E_{M^{0},m}$ which is the union of : 
all admissible pairs of $I^n$-arrows in $\Theta_{M^{0},m}$ which are
not in $E_{M^{0},m-1}$, all $j$-admissible pairs of $I^n$-arrows in $\Theta_{M^{0},m}$ which are
not in $E_{M^{0},m-1}$. 
\end{itemize}

The colimits $\Theta^{\infty}_{W^{0}}$ in $\Cat$ :

$$\begin{tikzcd}
\Theta_0\arrow[rrrrrrrrrrrrdd]\arrow[rr]&&\Theta_{M^{0},0}\arrow[rrrrrrrrrrdd]\arrow[rr]&&\Theta_{M^{0},1}
\arrow[rrrrrrrrdd]\arrow[rr]&&\Theta_{M^{0},2}\arrow[rrrrrrdd]\arrow[rr,]&&\cdots
\arrow[rr]&&\Theta_{M^{0},m}\arrow[rrdd]\arrow[rr]&&\Theta_{M^{0},m+1}\arrow[dd]\arrow[rr]&&\cdots\\\\
&&&&&&&&&&&&\Theta^{\infty}_{W^{0}}
\end{tikzcd}$$

is called the \textit{cubical coherator} of cubical weak $\infty$-groupoids with connections. Denote
by $\mathbb{M}\text{od}(\Theta^{\infty}_{W^{0}})$ the category of $\Theta^{\infty}_{W^{0}}$-models in $\E$. The category
$\mathbb{M}\text{od}(\Theta^{\infty}_{W^{0}})$ is a category of models of cubical weak $\infty$-groupoids with 
connections.

\bigbreak{}
\begin{minipage}{1.0\linewidth}
Laboratoire de Math\'ematiques d'Orsay, UMR 8628\\
Universit\'e de Paris-Saclay and CNRS\\
B\^atiment 307, Facult\'e des Sciences d'Orsay\\
94015 ORSAY Cedex, FRANCE\\
\href{mailto:camell.kachour@universite-paris-saclay.fr}{\url{camell.kachour@universite-paris-saclay.fr}}
\end{minipage}
\end{document}